% documentclass can be ctexart, ctexrep, ctexbook
% cs4size - font size
% indent - indent the first line all the time
% twoside - if you want to print on both side of the paper, or else you should omit this
\documentclass[a4paper,12pt,oneside]{article}
\usepackage{amsfonts}
% default paper settings, change it according to your word
\usepackage[a4paper,hmargin={2.54cm,2.54cm},vmargin={3.17cm,3.17cm}]{geometry}
% common math packages
\usepackage{amsmath,amssymb,amsthm,amsfonts}
\usepackage{bbm}
% number the equations
\numberwithin{equation}{section}

% set the abstract format, need abstract package
\usepackage{abstract}
\setlength{\absleftindent}{1.5cm} \setlength{\absrightindent}{1.5cm}
\setlength{\abstitleskip}{-\parindent}
\setlength{\absparindent}{0cm}

\usepackage{graphicx}
\setlength{\parindent}{0.8cm}
% Theorem style
\newtheoremstyle{mystyle}{12pt}{12pt}{\itshape}{0cm}{\bfseries}{}{1em}{}
\theoremstyle{mystyle}
% Common theorem environment

\newtheorem{theorem}{Theorem}[section]

\newtheorem{lemma}{Lemma}[section]
\newtheorem{proposition}{Proposition}[section]
\newtheorem{corollary}{Corollary}[section]

\newtheorem{remark}{Remark}[section]

%\renewcommand{\baselinestretch}{2}

%%%%%%%%%%%%%%%%%%%%%%%%%%%%%%%%

\begin{document}

\title{\bf{Smallest Gaps Between Eigenvalues of Random Matrices With Complex Ginibre,  Wishart and Universal Unitary Ensembles}}
% author name
 \author{Dai Shi\footnote{Institute of Computational and Mathematical Engineering, Stanford University, Stanford, 94305}, Yunjiang Jiang\footnote{Mathematics Department, Stanford University, Stanford, 94305}}

% date - can be empty
\date{}

\renewcommand{\thepage}{\roman{page}}
\setcounter{page}{1} %\tableofcontents\clearpage
% type out your article title
\maketitle\renewcommand{\thepage}{\arabic{page}}\setcounter{page}{1}

\begin{abstract}
\noindent In this paper we study the limiting distribution of the $k$ smallest gaps between eigenvalues of three kinds of random matrices -- the Ginibre ensemble, the Wishart ensemble and the universal unitary ensemble. All of them follow a Poissonian ansatz. More precisely, for the Ginibre ensemble we have a global result in which the $k$-th smallest gap has typical length $n^{-3/4}$ with density $x^{4k-1}e^{-x^4}$ after normalization. For the Wishart and the universal unitary ensemble, it has typical length $n^{-4/3}$ and has density $x^{3k-1}e^{-x^3}$ after normalization. \\
%\noindent{\bf{Key Words:}}Smallest Eigenvalues Gaps, Ginibre Ensemble, Wishart Ensemble 
\end{abstract}

\section{Introduction}
The goal of this paper is to establish the limiting distribution of the $k$ smallest gaps between eigenvalues. In particular, we focus on the following three ensembles: the Ginibre ensemble (where the entires are i.i.d. complex Gaussian), the Wishart ensemble (where it is the sample covariance matrix of $n$ independent complex Gaussian vectors), and the universal unitary ensemble (where the matrix is invariant under unitary transformations with a general potential). More precisely, we address two problems here: pick an $n\times n$ matrix from a given ensemble, what is the typical length of the $k$ smallest eigenvalue gaps? As a second order approximation, under the correct scaling, what is the joint limiting distribution of these gaps as we send $n$ to infinity?

Through many years typical spacing between eigenvalues of random matrices has attracted a lot of attention, this is partially due to Montgomery's conjecture: the normalized gaps between zeros from a Riemann zeta function appear to share the same statistics of the normalized eigenvalue gaps of a circular unitary ensemble (CUE) (see \cite{13} for a review). We already know that in the Gaussian unitary ensemble (GUE) case, the expectation of the empirical distribution of the normalized gaps has the limiting density which can be expressed as a Fredholm determinant.  $p(s) := \partial^2_{ss}\det(I-\mathcal{K})_{L^2(0, s)}$ where $\mathcal{K}$ is the integral operator on $L^2(0, s)$ with the kernel $K(x, y) = \sin[\pi(x-y)]/[\pi(x-y)].$ The asymptotics of $p(s)$ when $s\to0$ and $s\to\infty$ have also been obtained (see Chapter 6 and Appendix 13 of \cite{6}). Later Kats and Sarnak in \cite{14} proved that this can also be extended to the CUE case and the corresponding orthogonal and symplectic case, with convergence rates provided. Similar results for universal unitary ensembles (UUE) are proved in \cite{15}. For the second order asymptotics, Soshnikov established in \cite{16} the Gaussian fluctuation around the limiting distribution for the CUE case. Recently, Tao and Vu  proved the \emph{Four Moment Theorem} in \cite{17} saying that local eigenvalue statistics are universal among all Wigner matrices provided that the first four moments are matched. Yau et al also proved the similar universality result (under different assumptions) by analyzing the time-reversal approximation of the Dyson Brownian motion. See \cite{18} and \cite{19}.

However, little was known about the extreme spacings, even in some simple ensembles. Vinson in \cite{4} first studied this subject and obtained results of smallest spacings for the CUE and GUE case by asymptotic moment estimation. More precisely, by matching the moments, he proved that the number of eigenvalue gaps in $(0, n^{-4/3}s)$ tends to the Poisson distribution. Later Soshnikov in \cite{5} established similar results for general determinantal random point processes on the real line with a  translation invariant kernel. Although the main idea is also moment (more precisely, cumulant) matching, he used another technique: he introduced the $s$\emph{-modified} point process and investigated the correlation function of it instead of the original process. Recently Ben Arous and Bourgade in \cite{1} developed Soshnikov's methods to solve the problem for CUE and GUE completely, showing that the joint distribution of the $k$ smallest gaps has a Poissonian limit. For other ensembles, there are almost no results concerning these. In this paper, we extend Ben Arous and Bourgade's method to analyze the smallest eigenvalue gaps of the Ginibre ensemble, the Wishart ensemble and the general universal unitary ensemble. 

For the Ginibre ensemble, we have two major issues. First, in the CUE or GUE case considered in \cite{1}, the eigenvalues (or eigenangles) are real. Hence we have a natural definition for the gap, i.e. $\lambda_{i+1}-\lambda_i$, thanks to the linear ordering in $\mathbb{R}$. In our case, however, the matrix is not hermitian so we have to deal with complex eigenvalues. Of course for any eigenvalue $\lambda_i$ it is natural to define $\min_{j\neq i}|\lambda_j-\lambda_i|$ to be the gap. However the point process $\sum_{i=1}^{n}\delta_{n^{3/4}\min_{j\neq i}|\lambda_j-\lambda_i|}$ cannot have a Poissionian limit because there are at least two identical points. The key problem here is that the search direction must be a half plane --- you can not ``look back'' when searching for the gap (recall in the real case we define the gap to be $\lambda_{i+1}-\lambda_i$ rather than $\min\{\lambda_{i} -\lambda_{i-1}, \lambda_{i+1}-\lambda_i\}$). Thus our remedy is to introduce a total ordering in $\mathbb{C}$ and then consider the modified point process $\sum_{i=1}^{n-1}\delta_{n^{3/4}\min_{j\neq i}\{|\lambda_j-\lambda_i|, \lambda_j\succeq\lambda_i\}}$ instead. We prove that the modified process has a Poisson limit. See Theorem \ref{thm1}.

However, the total ordering appeared in Theorem \ref{thm1} seems cumbersome, and our main concern is the joint distribution of the $k$ smallest elements $t_1, \ldots, t_k$ of the set $\{|\lambda_j-\lambda_i|: i\neq j\}$ --- with no ordering involved at all! We prove in Corollary \ref{col1} that for the small gaps the ordering introduced above will have hardly any effect. More rigorously, denote $\widetilde{t}_1, \ldots, \widetilde{t}_k$ to be the $k$ smallest elements of $\{|\lambda_j-\lambda_i|: \lambda_j\succ \lambda_i\}$, then we prove that $(t_1, \ldots, t_k) = (\widetilde{t}_1, \ldots, \widetilde{t}_k)$ with high probability, enabling us to analyze the gaps of the natural version from the ``ordered version''.

To state the second issue, we start from the GUE case. From the semi-circle law, the empirical measure of the eigenvalues has a semi-circle density which is supported on $[-2, 2]$. For any $\epsilon>0$ we refer $(-2+\epsilon, 2-\epsilon)$ as the \emph{bulk} of the eigenvalues and $(-2-\epsilon, -2+\epsilon)\cup(2-\epsilon, 2+\epsilon)$ as the \emph{edge}. Ben Arous in \cite{1} stated the result only for gaps in the bulk. This is due to a technical issue of the Plancerel-Rotach asymptotics of Hermite polynomials. For our Ginibre case, things are similar. Due to the circular law, this time the limiting measure is a uniform distribution on the unit circle in $\mathbb{C}$. The kernel function $K(x, x) = \pi^{-1}e^{-n|x|^2}\sum_{\ell = 0}^{n-1}(n|x|^2)^\ell/\ell!$ converges to $\pi^{-1}$ if $|x|<1$ and to $0$ if $|x|>1$. But the convergence rate is slow for $x$ near the boundary. Nevertheless, our result for the convergence of the smallest gaps is \emph{global}, that is, we do not restrict the gaps to be within the bulk of the unit circle --- they are freed not only to the edge, but further to the whole complex plane. This is done by a more careful analysis of the convergence rate of the kernel function, as is stated in Lemma \ref{good}. Meanwhile, in the following lemmas we analyzed separately the inner part, the boundary part and the outer part of the unit circle to eliminate the edge issues. In our situation, the orthogonal polynomial in the complex plane is just $z^n$. Compared to the Hermite polynomial in the GUE case, this is much simpler, which enables us to perform a more precise analysis. 

It's generally accepted (for example see \cite{6}) that the eigenvalue gaps are anti-correlated with each other, that is, large eigenvalue gaps tend to be followed with small ones and vice-versa. However, our result that the small eigenvalue gaps follows a Poissonian ansatz seems to imply that they are asymptotically independent. This has a natural explanation. As the gaps are anti-correlated, the small gaps are very unlikely to cluster together. That is, for the real case (such as GUE), eigenvalues in the set $\{\lambda_i: \lambda_{i+1}-\lambda_i< \text{ const }\cdot n^{-4/3}\}$ must fall apart. the distance between these $\lambda_i$'s are large enough so that these small gaps $\lambda_{i+1}-\lambda_i$  are nearly independent.
%Before we dive into the statement of the theorems, we first make some heuristics predictions. We take the universal unitary ensemble as an example. As is proved in \cite{15}, the eigenvalue gaps has limiting distribution with density
%\[
%
%\]

The rest of the paper is organized as follows. In subsection 1.1, 1.2 and 1.3, we will state the problem and our result, respectively, for the three ensembles. In section 2, we will provide a general overview of our methods. 
The proofs for the three ensembles will be given in section 3 to section 5, respectively. In section 6, a conclusion of the paper will be given.
\subsection{The Complex Ginibre Ensemble}
Let $X_n = (x_{ij})_{i, j=1}^n\in\mathbb{C}^{n\times n}$ be a random matrix with independent standard complex Gaussian entries. That is, 
\[
x_{ij} = \frac{u_{ij} + \sqrt{-1}v_{ij}}{\sqrt{2}}
\]
where $u_{ij}, v_{ij}$ are independent real Gaussian random variables with mean zero and variance one. Here we normalize each entry by the factor $\sqrt{2}$ to keep the variance unity. Such $X_n$ is called \emph{Complex Ginibre Ensemble}. For further discussion about this ensemble, see \cite{6}. 

Define $A_n = X_n/\sqrt{n}$ as the normalization $X_n$. Furthermore define $\lambda_1, \ldots, \lambda_n$ as the $n$ eigenvalues of $A_n$. By the circular law (see \cite{7}), the empirical spectrum distribution will almost surely converge to the uniform distribution of the unit disk in the complex plane. 

Since we are dealing with \emph{complex} eigenvalues now, we need to define a total ordering in $\mathbb{C}$ first. For two complex numbers $z_1$ and $z_2$, we define
\begin{equation}\label{s1.1}
z_1 \prec z_2 \iff \left\{\begin{array}{l}\Im(z_1)<\Im(z_2), \text{ or }\\ \Im(z_1) = \Im(z_2) \text{ and }\Re(z_1)<\Re(z_2) \end{array}\right.
\end{equation}
It is obvious that $(\cdot\prec\cdot)$ is just the standard lexicographical ordering.

Without losing generality, we can assume $\lambda_1\preceq\lambda_2\preceq\cdots\preceq\lambda_n$. 
Consider the point process on $\mathbb{R}^+\times\mathbb{C}$
\begin{equation}\label{1.1}
\chi^{(n)} = \sum_{i=1}^{n-1}\delta_{(n^{3/4}|\lambda_{i^*}-\lambda_i|, \lambda_i)}
\end{equation}
where $i^* = \arg\min_{j = 1 , j\neq i}^n\{|\lambda_j-\lambda_i|: \lambda_j\succeq\lambda_i\}$ is the index such that $\lambda_{i^*}$ is the closest one to $\lambda_i$ among all the eigenvalues greater than or equal to $\lambda_i$. 

The main result is about the convergence of $\chi^{(n)}$ to a Poisson point process. 

\begin{theorem}\label{thm1}
As $n\to\infty$, the process $\chi^{(n)}$ converges weakly to a Poisson point process $\chi$ in $\mathbb{R}^+\times\mathbb{C}$ with intensity 
\[
\mathbb{E}\chi(A\times I) = \biggl(\int_B|u|^2du\biggl)\biggl(\int_{I\cap\mathrm{D}(0, 1)}\frac{dv}{\pi^2}\biggl) = \frac{|I\cap\mathrm{D}(0, 1)|}{\pi}\int_Ar^3dr.
\]
for any bounded Borel sets $A\subset\mathbb{R}^+$ and $I\subset \mathbb{C}$. Here $\mathrm{D}(z, r)$ is a disk centered at $z$ and having radius $r$. Moreover $B = \{u\in\mathbb{C}: |u|\in A, u\succeq 0\}$ and $|\cdot|$ denotes the Lebesgue measure of the set.
\end{theorem}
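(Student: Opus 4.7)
The plan is to establish Poisson convergence of $\chi^{(n)}$ by the method of factorial moments, exploiting the determinantal structure of the Ginibre ensemble. Recall that $\lambda_1,\ldots,\lambda_n$ form a determinantal point process on $\mathbb{C}$ with explicit kernel
\[
K_n(z,w) \;=\; \frac{n}{\pi}\,e^{-\frac{n}{2}(|z|^2+|w|^2)}\sum_{\ell=0}^{n-1}\frac{(nz\bar w)^\ell}{\ell!}.
\]
By a standard moment/void-probability criterion for Poisson limits of simple point processes, it suffices to verify that for every bounded rectangle $A\times I\subset\mathbb{R}^+\times\mathbb{C}$ and every $k\ge 1$,
\[
\mathbb{E}\bigl[\chi^{(n)}(A\times I)\bigr]_k \;\longrightarrow\; \mu(A\times I)^k,
\]
where $\mu$ is the claimed intensity and $[X]_k=X(X-1)\cdots(X-k+1)$. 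My first step would be to replace $\chi^{(n)}$ by the pair-counting process
\[
\widetilde\chi^{(n)} \;=\; \sum_{\substack{i\neq j\\ \lambda_j\succeq\lambda_i}}\delta_{\bigl(n^{3/4}|\lambda_j-\lambda_i|,\;\lambda_i\bigr)},
\]
which is the natural object for determinantal computations. Since $\chi^{(n)}\le\widetilde\chi^{(n)}$ and the two processes agree on $A\times I$ except when some $\lambda_i\in I$ admits two distinct successors within distance $(\sup A)n^{-3/4}$, a direct bound on such \emph{triple near-collisions} via the three-point correlation $\rho_3(z_1,z_2,z_3)=\det(K_n(z_a,z_b))_{a,b\le3}$, using the Gaussian repulsion intrinsic to the kernel, would suffice to show the two processes have the same limit.

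For the first factorial moment I would invoke the determinantal two-point formula,
\[
\mathbb{E}\widetilde\chi^{(n)}(A\times I) \;=\; \iint \mathbf{1}_{\{n^{3/4}|w-z|\in A,\,w\succeq z,\,z\in I\}}\bigl[K_n(z,z)K_n(w,w)-|K_n(z,w)|^2\bigr]\,dz\,dw,
\]
and change variables $w=z+n^{-3/4}u$. In the bulk $|z|<1-\epsilon$, the asymptotic $K_n(z,w)\approx (n/\pi)\exp\bigl(-\tfrac{n}{2}|z-w|^2+i\,n\,\Im(z\bar w)\bigr)$ yields $K_n(z,z)K_n(w,w)-|K_n(z,w)|^2\approx(n/\pi)^2\bigl(1-e^{-n|z-w|^2}\bigr)\approx(n/\pi)^2\cdot n|z-w|^2$, since $n|z-w|^2=n^{-1/2}|u|^2\to 0$ under our scaling. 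Combined with the Jacobian $n^{-3/2}$, the bulk integral is
\[
\frac{1}{\pi^2}\int_{I\cap \mathrm{D}(0,1)}dz\int_B|u|^2\,du,
\]
which matches the stated intensity after a polar change of variable in $u$ that converts $\int_B|u|^2du$ to $\pi\int_A r^3dr$. Contributions from the critical annulus $\bigl||z|-1\bigr|\lesssim n^{-1/2}$ and the exterior $|z|>1+\epsilon$ must then be shown to be $o(1)$; this is exactly where the refined global kernel bound of Lemma \ref{good}, together with an explicit incomplete-Gamma decomposition of $\sum_{\ell<n}(n|z|^2)^\ell/\ell!$ in the regimes $|z|<1-\delta$, $\bigl||z|-1\bigr|\le\delta$, and $|z|>1+\delta$, is invoked.

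For the $k$-th factorial moment I would expand $[\widetilde\chi^{(n)}(A\times I)]_k$ as a sum over $k$-tuples of distinct ordered pairs, which by the determinantal moment formula equals an integral of a $2k\times 2k$ determinant $\det(K_n(\zeta_a,\zeta_b))$ over the $k$ clusters $\{z_a,\,w_a=z_a+n^{-3/4}u_a\}$. The dominant contribution comes from configurations in which the clusters are macroscopically separated: on such configurations the off-block entries of the matrix are super-polynomially small thanks to the Gaussian off-diagonal decay of $K_n$, so the determinant block-diagonalizes into $k$ independent $2\times 2$ blocks, each reproducing the first-moment integrand and yielding the desired factorization $\mu(A\times I)^k$. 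Configurations with two clusters close to each other are suppressed by a repulsion-type bound analogous to the triple-collision estimate of the first paragraph. The main obstacle, and the most delicate aspect of the whole argument, is precisely what distinguishes the global statement of Theorem \ref{thm1} from its bulk predecessors: promoting the Gaussian off-diagonal and cluster-separation estimates uniformly to the entire complex plane — in particular across the edge annulus $\bigl||z|-1\bigr|\asymp n^{-1/2}$ where the kernel transitions from $n/\pi$ to $0$ — which is exactly the role that Lemma \ref{good} is designed to play.
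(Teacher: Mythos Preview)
Your overall architecture matches the paper's: factorial-moment criterion (their Proposition~\ref{prop1}), reduction to an auxiliary process via a triple-collision estimate on $\rho_3$, block-diagonalization of the $2k\times 2k$ determinant for well-separated clusters, and the three-region (bulk/annulus/exterior) edge analysis driven by Lemma~\ref{good}. The genuine difference is the choice of auxiliary process. You work with the \emph{pair-counting} process $\widetilde\chi^{(n)}=\sum_{\lambda_j\succeq\lambda_i}\delta_{(n^{3/4}|\lambda_j-\lambda_i|,\lambda_i)}$ and compute its factorial moments directly from $\rho_{2k}$. The paper instead follows Soshnikov's $s$-modified route: it passes to the one-dimensional thinned process $\widetilde\xi^{(n)}=\sum_*\delta_{\lambda_i}$ on $\mathbb{C}$ keeping only those $\lambda_i$ with \emph{exactly one} eigenvalue in $\lambda_i+B_n$, and computes the correlation functions $\widetilde\rho_k$ of $\widetilde\xi^{(n)}$ via the inclusion--exclusion formula~(\ref{eqn5}). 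Their approach packages the ``exactly one neighbor'' combinatorics into that formula, at the cost of having to bound the $m\ge 1$ tail of the inclusion--exclusion sum; your approach avoids inclusion--exclusion entirely but must instead enumerate the ways $k$ ordered pairs can share eigenvalue indices.

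That enumeration is precisely where your sketch has a small gap. You write that $[\widetilde\chi^{(n)}(A\times I)]_k$ ``equals an integral of a $2k\times 2k$ determinant $\det(K_n(\zeta_a,\zeta_b))$'', but this is only the contribution from $k$-tuples of pairs whose $2k$ underlying eigenvalue indices are all distinct. Configurations where pairs overlap --- e.g.\ two pairs with the same base $(i,j),(i,j')$, or a chain $(i,j),(j,\ell)$ --- contribute integrals of $\rho_m$ with $m<2k$ and must be handled separately. Fortunately every such configuration forces at least three eigenvalues within distance $O(n^{-3/4})$ of one another, so the same $\rho_3$ bound you invoke for $\widetilde\chi^{(n)}-\chi^{(n)}$ disposes of them; you should say so explicitly. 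One further cosmetic point: the paper takes the transition annulus of width $n^{-0.02}$ rather than your $n^{-1/2}$, because the decay rate in Lemma~\ref{good} near $|z|=1$ is only barely sub-polynomial and needs a slightly thicker buffer to beat the $n^3$ prefactors.
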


\begin{remark}
In the definition of $i^* = \arg\min_{j = 1 , j\neq i}^n\{|\lambda_j-\lambda_i|: \lambda_j\succeq\lambda_i\}$, the constraint $\lambda_j\succeq\lambda_i$ is necessarily. Suppose $\lambda_{i_1}$ and $\lambda_{i_2}$ are the two closest eigenvalues. If we didn't impose the constraint, we would simultaneously have $i_1^* = i_2, i_2^* = i_1$ and hence $|\lambda_{i_0}-\lambda_{i_0^*}| = |\lambda_{i_1}-\lambda_{i_1^*}|$. This imply that at least two points in $\chi^{(n)}$ share the same first coordinate. Thus $\chi^{(n)}$ cannot have the Poissonian limit. 
\end{remark}

From Theorem \ref{thm1}, the intensity is proportional to $|I\cap\mathrm{D}(0, 1)|$. This is intuitive because by the circular law, the limiting empirical distribution is uniform inside the unit disk and the region outside $\mathrm{D}(0, 1)$ does not contribute to the intensity. 

As a comparison to non-repulsive i.i.d. samples, we define $\phi_i, 1\leq i\leq n$ to be independently and uniformly distributed in the unit circle. We can define a similar point process
\[
\widehat{\chi}^{(n)} = \sum_{i=1}^n\delta_{(n|\phi_{i^*}-\phi_i|, \phi_i)}.
\]
where $i^*$ are similarly defined as above. We have the similar version of Theorem \ref{thm1}.
\begin{theorem}\label{thm2.1}
As $n\to\infty$, the process $\widehat{\chi}^{(n)}$ converges weakly to a Poisson point process $\widehat{\chi}$ with intensity
\[
\mathbb{E}\widehat{\chi}(A\times I) = \frac{|I\cap\mathrm{D}(0, 1)|}{\pi}\int_Ardr.
\]
for any bounded Borel sets $A\in\mathbb{R}^+$ and $I\subset\mathbb{C}$. 
\end{theorem}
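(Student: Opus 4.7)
The proof parallels that of Theorem \ref{thm1} but is drastically simpler, since the $\phi_i$ are i.i.d.\ uniform rather than eigenvalues of a random matrix: the $k$-point correlation functions are the explicit constants $n!/((n-k)!\,\pi^k)$ on $\mathrm{D}(0,1)^k$, and no orthogonal polynomial asymptotics are required. By the standard factorial moment criterion for Poisson convergence of simple point processes, it suffices to show that for every bounded Borel set $B = A\times I\subset\mathbb{R}^+\times\mathbb{C}$ and every integer $m\geq 1$,
\begin{equation*}
\mathbb{E}\bigl[\widehat{\chi}^{(n)}(B)(\widehat{\chi}^{(n)}(B)-1)\cdots(\widehat{\chi}^{(n)}(B)-m+1)\bigr]\ \longrightarrow\ \mu(B)^m,
\end{equation*}
with $\mu(B)=\frac{|I\cap\mathrm{D}(0,1)|}{\pi}\int_A r\,dr$, together with the analogous convergence of joint factorial moments on disjoint bounded Borel sets.

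Write $E_i=\{\phi_i\in I,\ n|\phi_{i^*}-\phi_i|\in A\}$ and $H_z(r)=\mathrm{D}(z,r)\cap\{w:w\succeq z\}\cap\mathrm{D}(0,1)$. By exchangeability the factorial moment above equals $\frac{n!}{(n-m)!}\mathbb{P}(E_1\cap\cdots\cap E_m)$. For the first moment, condition on $\phi_1=z$: the remaining $n-1$ points are i.i.d.\ uniform on $\mathrm{D}(0,1)$, so
\begin{equation*}
\mathbb{P}\bigl(n|\phi_{1^*}-\phi_1|\geq s\ \big|\ \phi_1=z\bigr) = \Bigl(1-\tfrac{|H_z(s/n)|}{\pi}\Bigr)^{n-1}.
\end{equation*}
When $z$ lies in the bulk of $\mathrm{D}(0,1)$, $|H_z(s/n)|=\pi s^2/(2n^2)$; differentiating in $s$ and integrating against $dz/\pi$ over $I\cap\mathrm{D}(0,1)$ gives $n\mathbb{P}(E_1)\to\mu(B)$, with the boundary annulus of width $O(1/n)$ contributing only an $o(1)$ correction.

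For $m\geq 2$ the main step is to establish asymptotic independence of $E_1,\ldots,E_m$. Condition on $\phi_i=z_i$ for $1\leq i\leq m$. In the separated regime $|z_i-z_j|>2\sup A/n$ for every $i\neq j$, the half-disks $H_{z_i}(\sup A/n)$ are pairwise disjoint and no $z_j$ can serve as a forward neighbor to any other $z_i$; consequently the events $E_i$ are determined by disjoint portions of the i.i.d.\ sample $\phi_{m+1},\ldots,\phi_n$, and are mutually independent given the $z_i$'s. Integrating the product $\prod_i\mathbb{P}(E_i\mid z_i)$ against $\prod_i dz_i/\pi$ over the separated subset of $(I\cap\mathrm{D}(0,1))^m$ produces the main term $(\mu(B)/n)^m(1+o(1))$, whence $\frac{n!}{(n-m)!}$ times this tends to $\mu(B)^m$. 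On the complementary clustered region, where some pair satisfies $|z_i-z_j|\leq 2\sup A/n$, the Lebesgue measure is $O(n^{-2})$; crucially, if $z_i\prec z_j$ then $z_i$ cannot serve as a forward neighbor for $z_j$ under the one-sided definition of $j^*$, so $E_j$ still demands a point of $\phi_{m+1},\ldots,\phi_n$ in $H_{z_j}(\sup A/n)$ with probability $O(n^{-1})$, and combined with independent $O(n^{-1})$ factors from each remaining $E_k$ ($k\notin\{i,j\}$) the conditional probability on the clustered region is $O(n^{-(m-1)})$. The total clustered contribution is thus $O(n^{-2})\cdot O(n^{-(m-1)}) = O(n^{-(m+1)})$, and after multiplying by $\frac{n!}{(n-m)!}\sim n^m$ it is $O(n^{-1})=o(1)$. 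The argument extends verbatim to joint factorial moments across disjoint $B_1,\ldots,B_r$.

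The only technical nuisance is the boundary of $\mathrm{D}(0,1)$: when $z$ lies within $\sup A/n$ of $\partial\mathrm{D}(0,1)$, the half-disk $H_z(s/n)$ is truncated and its area deviates from $\pi s^2/(2n^2)$. However, this affects only an annulus of Lebesgue measure $O(1/n)$, producing relative errors of $O(1/n)$ in every moment asymptotic above, which are absorbed into the stated $o(1)$'s. An alternative proof route couples $\phi_1,\ldots,\phi_n$ with a Poisson point process on $\mathrm{D}(0,1)$ of intensity $n/\pi$ and invokes Chen--Stein bounds, but the direct moment computation above is the most transparent.
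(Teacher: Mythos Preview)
The paper explicitly omits the proof of this theorem, remarking only that it is ``much simpler compared to Theorem~\ref{thm1}''; there is therefore no detailed argument to compare against. Your direct factorial-moment computation is the natural approach and is essentially correct.

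Two small imprecisions are worth flagging. First, in the separated regime the events $E_1,\ldots,E_m$ are not \emph{exactly} mutually independent given $z_1,\ldots,z_m$: for an i.i.d.\ (binomial) sample, counts in disjoint regions are multinomial rather than independent Poisson. Concretely, with $p_i=|H_{z_i}(\sup A/n)|/\pi=O(n^{-2})$, the joint no-hit probability $(1-\sum_i p_i)^{n-m}$ differs from the product $\prod_i(1-p_i)^{n-m}$ by a factor $1+O(n\cdot\max_{i<j}p_ip_j)=1+O(n^{-3})$, so the asymptotic factorization you use is valid after absorbing an $O(n^{-1})$ error into the $o(1)$.

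Second, your clustered-region estimate is written only for a single close pair. When several pairs cluster one should partition $\{z_1,\ldots,z_m\}$ into $p<m$ equivalence classes under the relation $|z_i-z_j|\le 2\sup A/n$ (exactly as in Lemma~\ref{lemma5}); the corresponding region has measure $O(n^{-2(m-p)})$, and since the $\prec$-maximal element of each class still forces an external forward neighbor with probability $O(n^{-1})$, the conditional probability is $O(n^{-p})$. The total contribution is then $n^m\cdot O(n^{-2(m-p)})\cdot O(n^{-p})=O(n^{p-m})=o(1)$. With these two clarifications your argument is complete.
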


We omit the proof of Theorem \ref{thm2.1}, which is much simpler compared to Theorem \ref{thm1}. We observe that the typical length of gaps between the eigenvalues from the Ginibre ensemble is $\mathcal{O}(n^{-3/4})$, while for i.i.d. case it is $\mathcal{O}(n^{-1})$. The larger gaps in the Ginibre ensemble demonstrates the repulsive force. Even after normalization, the intensity corresponding to Ginibre ensemble has the $r^3dr$ term, exhibiting heavier tails compared to $rdr$ in the i.i.d. case.

Now let's return to the main theme of the paper. Let $t_1^{(n)}Ê\leq \ldots \leq t_k^{(n)}$ be the $k$ smallest elements in the set $\{|\lambda_i - \lambda_j|: i\neq j\}$. Moreover let $\tau_\ell^{(n)} = (\pi/4)^{1/4}t^{(n)}_\ell$, $\ell = 1, 2\ldots, k$. We have the following corollary which describes the limiting distribution of $\tau_k^{(n)}$. 
\begin{corollary}\label{col1}
For any $0<x_1<y_1<\ldots<x_k<y_k$, as $n\to\infty$
\[
\mathbb{P}(x_\ell<n^{3/4}\tau_\ell^{(n)}<y_\ell, 1\leq\ell\leq k) \longrightarrow \biggl(e^{-x_k^4}-e^{-y_k^4}\biggl)\prod_{\ell=1}^{k-1}(y_\ell^4-x_\ell^4).
\]
In particular, $n^{3/4}\tau^{(n)}_k$ converges in distribution to $\tau_k$ with density
\[
\mathbb{P}(\tau_k\in dx) \propto x^{4k-1}e^{-x^4}dx.
\]
\end{corollary}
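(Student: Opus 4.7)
The plan is to deduce Corollary \ref{col1} from Theorem \ref{thm1} in three steps.

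First, I project $\chi^{(n)}$ onto its first coordinate. Taking $I$ to be a bounded neighborhood of $\overline{D(0,1)}$ (and separately arguing, using the edge and outside estimates developed for Theorem \ref{thm1}, that eigenvalues straying outside the unit disk are too sparse to produce gaps of order $n^{-3/4}$), the projected Poisson limit has intensity $r^3\,dr$ on $\mathbb{R}^+$, since $|D(0,1)| = \pi$. The rescaling $\tau^{(n)}_\ell = (\pi/4)^{1/4}\, t^{(n)}_\ell$ is chosen so that the rescaled cumulative intensity becomes $\Lambda(s) = s^4$.

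Second, I must identify the $k$ smallest first coordinates of $\chi^{(n)}$, which a priori record only nearest-forward-neighbor gaps, with the $k$ smallest unrestricted gaps $t^{(n)}_\ell$. An unordered small gap $\{\lambda_i,\lambda_j\}$ fails to register in $\chi^{(n)}$ only when the lower endpoint $\lambda_i$ has an even closer forward neighbor, i.e.\ only when three eigenvalues lie within $O(n^{-3/4})$ of one another. I would rule this out by bounding the expected number of such triples through the determinantal three-point correlation function $\rho^{(3)}_n$: the integration region in $\mathbb{C}^3$ has Lebesgue volume $O(n^{-3})$, and $\rho^{(3)}_n$ carries small-gap repulsion factors accessible via the Ginibre kernel estimates (Lemma \ref{good}) already established for Theorem \ref{thm1}, which together force the total expectation to zero. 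This yields $(t_1,\ldots,t_k) = (\widetilde t_1,\ldots,\widetilde t_k)$ with high probability.

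Third, the Poisson order-statistic calculation is standard. By independence of a Poisson process on disjoint intervals, the event $\{X_{(\ell)} \in (x_\ell, y_\ell),\ 1 \leq \ell \leq k\}$ decomposes as ``no points'' on $(0, x_1)$ and on each $(y_\ell, x_{\ell+1})$, ``exactly one point'' on each $(x_\ell, y_\ell)$ for $\ell < k$, and ``at least one point'' on $(x_k, y_k)$. Multiplying the Poisson probabilities and telescoping the no-point exponentials down to $e^{-\Lambda(x_k)}$ leaves
\[
\bigl(e^{-\Lambda(x_k)}-e^{-\Lambda(y_k)}\bigr)\prod_{\ell=1}^{k-1}\bigl(\Lambda(y_\ell)-\Lambda(x_\ell)\bigr),
\]
which under $\Lambda(s) = s^4$ is exactly the claimed formula. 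The marginal density $\mathbb{P}(\tau_k \in dx) \propto x^{4k-1}e^{-x^4}\,dx$ is recovered by collapsing the first $k-1$ intervals and differentiating.

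I expect the second step to be the main obstacle: the three-point correlation estimate is routine in the bulk of the unit disk, but must be carried out \emph{globally}, extending uniformly across the edge and to eigenvalues just outside the unit circle --- precisely the regime where the Ginibre kernel converges most slowly and where the finer analysis of Lemma \ref{good} is indispensable. Once this identification is in hand, the order-statistic computation is automatic.
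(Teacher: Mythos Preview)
Your proposal is correct and follows essentially the same route as the paper: first compute the Poisson order-statistic probabilities for the ordered nearest-forward-neighbor gaps $\widetilde t_\ell^{(n)}$ via Theorem~\ref{thm1} (yielding the telescoped product with $\Lambda(s)=s^4$), and then identify $(t_1,\ldots,t_k)=(\widetilde t_1,\ldots,\widetilde t_k)$ with high probability by bounding the expected number of eigenvalue triples within $O(n^{-3/4})$ through the global $\rho_3$ estimate already established in Lemma~\ref{lemma2}. The paper handles the ``global'' issue you anticipate by restricting to $|\lambda|<2$ and invoking the circular law to discard outliers, after which the three-region analysis of Lemma~\ref{lemma2} applies verbatim.
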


For a special case $k =1$, it describes the distribution of the minimum gap between eigenvalues of the Ginibre ensemble. The density is proportional to $x^3e^{-x^4}$.

\subsection{The Complex Wishart Ensemble}
As the second part of the paper, we consider the complex Wishart ensemble. Let the matrix $X_{mn} = (x_{ij})\in\mathbb{C}^{m\times n}$ be a rectangular random matrix of Ginibre ensemble. That is, $x_{ij} = (u_{ij} + \sqrt{-1}v_{ij})/\sqrt{2}$ where $u_{ij}, v_{ij}$'s are  i.i.d. standard normals. A complex Wishart ensemble, denoted as $W_2(m, n)$, can be obtained as $X_{mn}^*X_{mn}$. Here $X^*$ denotes the conjugate transpose of $X$. Since $X_{mn}^*X_{mn}$ and $X_{mn}X_{mn}^*$ share the same non-zero eigenvalues, we can assume without losing generality that $m\geq n$.

Define $A_{mn} = X_{mn}^*X_{mn}/m\in\mathbb{C}^{n\times n}$ as the normalization of the Wishart ensemble. Such form can be recognized as the sample covariance matrix. Denote $\lambda_1\leq\ldots\leq\lambda_n$ as the eigenvalues of $A_{mn}$. As we send $n, m\to\infty$ while keeping the ratio fixed $m/n = \beta\in[1, \infty)$, the empirical distribution of these eigenvalues converges in probability to the March\'{e}nko-Pastur Law (see \cite{8}). The density is 
\[
g(x) = \frac{\beta}{2\pi}\cdot\frac{\sqrt{((1+{\beta^{-1/2}})^2-x)(x-(1-{\beta^{-1/2}})^2)}}{x}.
\]

Similar to the Ginibre case, for any pre-fixed $\epsilon_0>0$, we define the random point process in $\mathbb{R}^+\times\mathbb{R}$ by
\[
\chi^{(n)} = \sum_{i=1}^{n-1}\delta_{(n^{4/3}|\lambda_{i+1}-\lambda_i|, \lambda_i)}\mathbbm{1}_{\{(1-{\beta^{-1/2}})^2+\epsilon_0<\lambda_i<(1+{\beta^{-1/2}})^2-\epsilon_0\}}
\]
Note that here we do not need to define a total ordering because there is already a natural one on $\mathbb{R}$. 

Our second result is stated in the following. 
\begin{theorem}\label{thm1.3}
For any fixed $\epsilon_0>0$, the point process $\chi^{(n)}$ converges to $\chi$ weakly as $m\geq n\to\infty, m/n\to\beta\in[1, \infty)$, in which $\chi$ is a Poisson process in $\mathbb{R}^+\times\mathbb{R}$ with intensity
\[
\mathbb{E}\chi(A\times I) = \frac{\pi^2}{3}\int_Au^2du \int_I\Biggl[\frac{\beta}{2\pi}\frac{\sqrt{((1+{\beta^{-1/2}})^2-x)(x-(1-{\beta^{-1/2}})^2)}}{x}\Biggl]^4dx
\]
for any bounded Borel sets $A\subset\mathbb{R}_+$ and $I\in((1-{\beta^{-1/2}})^2+\epsilon_0, (1+{\beta^{-1/2}})^2-\epsilon_0)$.
\end{theorem}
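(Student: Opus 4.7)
The plan is to adapt the factorial-moment matching strategy used for GUE in \cite{1} to the Laguerre setting. Since the eigenvalues of the complex Wishart ensemble form a determinantal point process with kernel $K_{mn}(x,y)$ built from the orthonormal Laguerre polynomials (see Chapter 6 of \cite{6}), I would begin by invoking Kallenberg's theorem: weak convergence $\chi^{(n)}\to\chi$ to a Poisson process with the stated intensity follows once, for every finite family of disjoint bounded rectangles $B_1,\ldots,B_r\subset\mathbb{R}^+\times I$ and integers $p_1,\ldots,p_r\geq 0$,
\[
\mathbb{E}\prod_{\ell=1}^r \frac{\chi^{(n)}(B_\ell)!}{(\chi^{(n)}(B_\ell)-p_\ell)!}\longrightarrow \prod_{\ell=1}^r\bigl(\mathbb{E}\chi(B_\ell)\bigr)^{p_\ell}.
\]
Following Soshnikov \cite{5}, I would first replace the consecutive-gap process by its all-pairs analogue $\widetilde{\chi}^{(n)}=\sum_{i<j}\delta_{(n^{4/3}(\lambda_j-\lambda_i),\lambda_i)}$ restricted to the bulk; a small consecutive gap differs from a small pair gap only if a third eigenvalue is squeezed in between, an event whose probability is $O(n^{-4/3})$ per candidate pair by determinantal repulsion, hence asymptotically negligible.

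The leading-order intensity then falls out of the sine-kernel bulk asymptotics for the Laguerre kernel. Writing
\[
\mathbb{E}\widetilde{\chi}^{(n)}(A\times I)=\int_I dx\int_A n^{-4/3}\rho_2^{(n)}\bigl(x,x+n^{-4/3}u\bigr)\,du
\]
with $\rho_2^{(n)}(x,y)=K_{mn}(x,x)K_{mn}(y,y)-K_{mn}(x,y)^2$, the standard Plancherel--Rotach bulk asymptotics give, uniformly for $x$ in any compact subinterval of the Marchenko--Pastur bulk (precisely what the $\epsilon_0$-hypothesis guarantees), $K_{mn}(x,x)=ng(x)(1+o(1))$ and $K_{mn}(x,x+s)\approx ng(x)\sin(\pi ng(x)s)/(\pi ng(x)s)$. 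Substituting $s=n^{-4/3}u$ so that $ng(x)s=O(n^{-1/3})$, and using $1-\sin^2(\pi\rho s)/(\pi\rho s)^2=(\pi\rho s)^2/3+O((\rho s)^4)$, yields
\[
\rho_2^{(n)}\bigl(x,x+n^{-4/3}u\bigr)=\frac{\pi^2}{3}n^{4/3}g(x)^4 u^2\,(1+o(1)),
\]
which reproduces exactly the claimed intensity $(\pi^2/3)\int_A u^2\,du\int_I g(x)^4\,dx$.

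For higher factorial moments the argument, as in \cite{1}, is a combinatorial expansion of the $2p$-point correlation function $\rho_{2p}^{(n)}(x_1,\ldots,x_{2p})=\det(K_{mn}(x_i,x_j))$ by the Leibniz formula. Among the $(2p)!$ permutations, only those that pair up the $2p$ variables according to the prescribed grouping into $p$ small-gap pairs survive in the limit, producing the factorized product $\prod_\ell\bigl(\mathbb{E}\chi(B_\ell)\bigr)^{p_\ell}$. Every other permutation either couples variables from different pairs, whose separation is of order one (suppressed by the off-diagonal decay of $K_{mn}$), or forces three eigenvalues to lie within a common window of size $n^{-4/3}$ (suppressed by an extra $O(n^{-2/3})$ repulsion factor).

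The principal obstacle, I expect, is making the Plancherel--Rotach estimates quantitatively uniform across $x\in I$ and supplying matching crude bounds on $K_{mn}(x,y)$ for $|x-y|\gg n^{-4/3}$ that are strong enough to absorb the $(2p)!$-type combinatorial losses in the moment expansion. The $\epsilon_0$ cushion is exactly what keeps $x$ away from the soft edges at $(1\pm\beta^{-1/2})^2$, where the kernel would instead scale to the Airy law and the natural gap normalization changes, and away from the hard edge at $0$ when $\beta=1$, where the Bessel kernel takes over.
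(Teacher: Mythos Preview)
Your strategy is correct and matches the paper's in all essentials: factorial-moment convergence via the determinantal structure, the sine-kernel bulk asymptotic for the Laguerre kernel giving $\rho_2(x,x+n^{-4/3}u)\sim\frac{\pi^2}{3}n^{4/3}g(x)^4u^2$, suppression of three-eigenvalue clusters, and the role of $\epsilon_0$ in avoiding the soft (and, when $\beta=1$, hard) edges.

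The one organizational difference is that the paper does not use your all-pairs process $\widetilde{\chi}^{(n)}=\sum_{i<j}\delta_{(n^{4/3}(\lambda_j-\lambda_i),\lambda_i)}$; instead it follows Soshnikov's $s$-modified construction, retaining those $\lambda_i$ for which exactly one eigenvalue falls in $\lambda_i+n^{-4/3}A$, to obtain a simple point process $\widetilde{\xi}^{(n)}$ on $\mathbb{R}$ whose $k$-point correlation function has a clean inclusion--exclusion expansion in terms of the original $\rho_{2k+m}$. This sidesteps the bookkeeping of ordered $k$-tuples of pairs that may share an eigenvalue; your route works too, but the $s$-modified process packages the ``three close eigenvalues'' error terms more systematically. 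On the technical side, where you correctly flag the main obstacle, the paper pins it down to two lemmas: the uniform bulk asymptotic $K_n(x,x)K_n(y,y)-K_n(x,y)^2=\frac{\pi^2}{3}n^4u^2g(x)^4+\mathcal{O}(n)$, and the derivative bounds $K_n=\mathcal{O}(n)$, $\partial K_n=\mathcal{O}(n^2)$, $\partial^2 K_n=\mathcal{O}(n^3)$ together with $K_n(x,y)=\mathcal{O}(1)$ for $|x-y|$ bounded below. These derivative bounds are precisely what make row/column subtraction on the $2k\times 2k$ determinant give the correct order count, and you will need them (or their Taylor-expansion equivalents) to justify the suppression of the cross-pair permutations in your Leibniz expansion as well.
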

\begin{remark}
As in \cite{1}, we introduce the $\epsilon_0$ here because of the technical restriction of the Plancherel-Rotach asymptotics of the generalized Laguerre polynomial. In Theorem \ref{thm1}, we successfully removed the $\epsilon_0$ by using more accurate bounds. We believe the $\epsilon_0$ can also be removed here but it need much effort to do so. 
\end{remark}

We can have a similar version of Corollary \ref{col1}. Fix $\epsilon_0>0.$ Let $t_1^{(n)}Ê\leq \ldots \leq t_k^{(n)}$ be the $k$ smallest eigenvalue gaps of the form $\lambda_{i+1}-\lambda_i$ such that $\lambda_i\in((1-{\beta^{-1/2}})^2+\epsilon_0, (1+{\beta^{-1/2}})^2-\epsilon_0)$. Moreover let 
\[
\tau_\ell^{(n)} = \biggl\{\frac{\pi^2}{9}\int_I\Biggl[\frac{\beta\sqrt{((1+{\beta^{-1/2}})^2-x)(x-(1-{\beta^{-1/2}})^2)}}{2\pi x}\Biggl]^4dx\biggl\}^{1/3}t^{(n)}_\ell, \ell = 1, 2\ldots, k. 
\]
We have the following corollary which describes the limiting distribution of $\tau_k^{(n)}$. 
\begin{corollary}\label{col2}
For any $0<x_1<y_1<\ldots<x_k<y_k$, then as $n\to\infty$
\[
\mathbb{P}(x_\ell<n^{4/3}\tau_\ell^{(n)}<y_\ell, 1\leq\ell\leq k) \longrightarrow \biggl(e^{-x_k^3}-e^{-y_k^3}\biggl)\prod_{\ell=1}^{k-1}(y_\ell^3-x_\ell^3).
\]
In particular, the $k$-th smallest normalized gap converges in distribution to $\tau_k$ with density
\[
\mathbb{P}(\tau_k\in dx) \propto x^{3k-1}e^{-x^3}dx.
\]
\end{corollary}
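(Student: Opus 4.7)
The plan is to deduce this corollary from Theorem \ref{thm1.3} by converting the event $\{x_\ell < n^{4/3}\tau_\ell^{(n)} < y_\ell,\ 1\leq\ell\leq k\}$ into counting conditions on disjoint boxes for the point process $\chi^{(n)}$ and then using the independence of the limiting Poisson process on disjoint Borel sets. Write $C := \bigl(\tfrac{\pi^2}{9}\int_I [g(x)]^4\,dx\bigr)^{1/3}$, where $g$ is the Marchenko--Pastur density, so that $n^{4/3}\tau_\ell^{(n)} = C\cdot n^{4/3} t_\ell^{(n)}$, and note from the intensity formula of Theorem \ref{thm1.3} that $\mathbb{E}\chi([a,b]\times I) = C^3(b^3-a^3)$. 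Since the first coordinates of $\chi^{(n)}$ are exactly the numbers $n^{4/3}|\lambda_{i+1}-\lambda_i|$ for $\lambda_i\in I$, their $k$ smallest values coincide with $n^{4/3} t_1^{(n)} \leq \cdots \leq n^{4/3} t_k^{(n)}$.

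First I would set $\tilde{x}_\ell = x_\ell/C$ and $\tilde{y}_\ell = y_\ell/C$ and rewrite the target event as the conjunction of four conditions on $\chi^{(n)}$: (i) zero points in $[0,\tilde{x}_1]\times I$; (ii) exactly one point in each $(\tilde{x}_\ell,\tilde{y}_\ell)\times I$ for $\ell=1,\ldots,k-1$; (iii) zero points in each $(\tilde{y}_\ell,\tilde{x}_{\ell+1})\times I$ for $\ell=1,\ldots,k-1$; and (iv) at least one point in $(\tilde{x}_k,\tilde{y}_k)\times I$. By Theorem \ref{thm1.3}, together with the fact that each of these $2k$ rectangles is bounded with boundary of zero intensity for $\chi$, the joint count vector on them converges to independent Poisson variables with means $C^3(\tilde{y}_\ell^3-\tilde{x}_\ell^3) = y_\ell^3-x_\ell^3$ on the \emph{bar} intervals and $x_1^3$, $x_{\ell+1}^3-y_\ell^3$ on the \emph{gap} intervals. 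Multiplying the corresponding Poisson probabilities gives
\[
e^{-x_1^3}\prod_{\ell=1}^{k-1}\Bigl[(y_\ell^3-x_\ell^3)\,e^{-(y_\ell^3-x_\ell^3)}\cdot e^{-(x_{\ell+1}^3-y_\ell^3)}\Bigr]\bigl(1-e^{-(y_k^3-x_k^3)}\bigr),
\]
in which all exponents telescope to $-x_k^3$, leaving the claimed formula $(e^{-x_k^3}-e^{-y_k^3})\prod_{\ell=1}^{k-1}(y_\ell^3-x_\ell^3)$.

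For the marginal density I would invoke the standard fact that the ordered points $\tau_1<\cdots<\tau_k$ of a Poisson process on $\mathbb{R}_+$ with intensity $3u^2\,du$ have joint density $e^{-\tau_k^3}\prod_{\ell=1}^{k}3\tau_\ell^2$ on the simplex $\tau_1<\cdots<\tau_k$; substituting $u_\ell=\tau_\ell^3$ collapses the integral over $\tau_1,\ldots,\tau_{k-1}$ into the volume $x^{3(k-1)}/(k-1)!$, giving $f_{\tau_k}(x) = \tfrac{3}{(k-1)!}x^{3k-1}e^{-x^3}$, which is proportional to $x^{3k-1}e^{-x^3}$. The one step requiring any care is to confirm that the weak convergence of Theorem \ref{thm1.3} yields joint convergence of counts on all $2k$ boxes simultaneously, but since each box is bounded with null-measure boundary in the limiting intensity, a standard Portmanteau argument suffices; no further edge analysis is needed because the $\epsilon_0$ cutoff is already built into $I$ and hence into the definition of $t_\ell^{(n)}$.
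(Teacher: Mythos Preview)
Your argument is correct and follows essentially the same route the paper takes for Corollary~\ref{col1}: translate the event $\{x_\ell<n^{4/3}\tau_\ell^{(n)}<y_\ell,\ 1\le\ell\le k\}$ into the counting conditions $\chi^{(n)}(C^{-1}(y_{\ell-1},x_\ell)\times I)=0$, $\chi^{(n)}(C^{-1}(x_\ell,y_\ell)\times I)=1$ for $\ell<k$, and $\chi^{(n)}(C^{-1}(x_k,y_k)\times I)\ge1$, then apply Theorem~\ref{thm1.3} and Poisson independence to telescope the exponents. Since in the Wishart case the gaps $\lambda_{i+1}-\lambda_i$ already coincide with the first coordinates of $\chi^{(n)}$, the extra step in the paper's Ginibre proof (showing $\widetilde t_\ell^{(n)}=t_\ell^{(n)}$ with high probability) is unnecessary here, and you rightly omit it; your derivation of the marginal density via the ordered-points formula for a Poisson process with intensity $3u^2\,du$ is a clean alternative to the paper's approach of letting $x_\ell\to y_\ell^-$ and integrating.
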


\subsection{The Unitary Universal Ensemble}
In this subsection we define the unitary universal ensemble (UUE), which is a generalization of the GUE. We denote $V(x)$ as a potential function satisfying the following two conditions: $V(x)$ is real analytical on $\mathbb{R}$, and has sufficient growth at $x=\pm\infty$
\[
\lim_{|x|\to\infty}\frac{V(x)}{\log(x^2+1)} = +\infty.
\]
In particular polynomials of the form $V(x) = \sum_{j=0}^k a_jx^{2j}$ satisfy this property. We say a matrix $X_n\in\mathbb{C}^n$ has the universal unitary ensemble, if $X_n$ is hermitian  with the joint probability distribution function
\[
p(X_n) \propto e^{-n\sum_{j=1}^nV(\lambda_j)}
\]
where $\{\lambda_j\}_{j=1}^n$ are the eigenvalues of $X_n$. Using Weyl integration, we know that the joint probability density function for the eigenvalues is 
\[
p(\lambda_1, \ldots, \lambda_n) \propto \prod_{i<j}|\lambda_i-\lambda_j|^2e^{-n\sum_{j=1}^nV(\lambda_j)}.
\]
This ensemble is invariant under unitary transformations (that is, $p(X_n) = p(U_n^*X_nU_n)$ for any fixed unitary matrices $U_n$). That's the reason why we call it ``universal unitary''. Note that if we take $V(x) = x^2$, then it is reduced to the Gaussian unitary ensemble case. But the Wishart ensemble does \emph{not} belong to UUE because the potential function is only defined on $\mathbb{R}_+$

For any potential $V(x)$, there's a related \emph{equilibrium measure}. We briefly define it here for integrity, but for further reference please consult \cite{15}.  Consider the minimization problem 
\[
E_V = \inf_\mu\biggl\{\iint\log|s-t|^{-1}d\mu(s)d\mu(t) + \int V(t)d\mu(t)\biggl|\int_{\mathbb{R}}d\mu = 1\biggl\}
\]
The infimum is achieved uniquely at some measure $\mu = \mu_V$, which is defined to be the equilibrium measure. It is proved that the equilibrium measure has a compact support and has form
\[
d\mu_V(x) = \Psi(x)dx.
\]
where $\Psi(x)$ is strictly positive in the interior of the support.

Now we can establish the following result. 
\begin{theorem}\label{thm1.4}
Suppose the equilibrium measure $\Psi(x)dx$ is supported in a single interval $[a, b]$. For any fixed $\epsilon_0>0$, define
\[
\chi^{(n)} = \sum_{j=1}^n\delta_{(n^{-4/3}|\lambda_{i+1}-\lambda_i|, \lambda_i)}\mathbbm{1}_{\lambda_i\in(a+\epsilon_0, b-\epsilon_0)}.
\] 
Then the point process $\chi^{(n)}$ converges to $\chi$ weakly as $n\to\infty$, in which $\chi$ is a Poisson process in $\mathbb{R}^+\times\mathbb{R}$ with intensity
\[
\mathbb{E}\chi(A\times I) = \frac{\pi^2}{3}\int_Au^2du \int_I\Psi(x)^4dx
\]
for any bounded Borel sets $A\subset\mathbb{R}_+$ and $I\in(a+\epsilon_0, b-\epsilon_0)$.
\end{theorem}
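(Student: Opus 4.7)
The plan is to adapt the factorial-moment reduction of Ben Arous--Bourgade, combined with Soshnikov's $s$-modified point process, to the UUE setting exactly as in the Ginibre and Wishart cases treated earlier in the paper. The UUE eigenvalues form a determinantal point process with correlation kernel
\[
K_n(x,y) \;=\; e^{-\frac{n}{2}(V(x)+V(y))}\sum_{k=0}^{n-1}p_k(x)p_k(y),
\]
where $\{p_k\}$ is the orthonormal polynomial sequence with respect to the weight $e^{-nV(x)}\,dx$. Every factorial moment of $\chi^{(n)}(A\times I)$ is therefore expressible as an integral of a determinant built from $K_n$, so weak Poisson convergence on $\mathbb{R}^+\times(a+\epsilon_0, b-\epsilon_0)$ reduces to verifying that the joint factorial moments over disjoint rectangles $A_\ell\times I_\ell$ factorize and match the intended Poisson intensities.

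The crucial analytic ingredient is the bulk Plancherel--Rotach asymptotic for orthogonal polynomials associated to $e^{-nV}$, available for general real analytic $V$ through the Deift--Zhou Riemann--Hilbert steepest-descent method (cf.\ \cite{15}): uniformly for $x$ in any compact subinterval of $(a,b)$ and $u,v$ in bounded sets,
\[
\frac{1}{n\Psi(x)}\,K_n\!\left(x+\frac{u}{n\Psi(x)},\;x+\frac{v}{n\Psi(x)}\right)\;\longrightarrow\;\frac{\sin\pi(u-v)}{\pi(u-v)},
\]
with matching uniform control on a few derivatives. The restriction to $(a+\epsilon_0, b-\epsilon_0)$ is precisely what makes this convergence uniform and is responsible for the $\epsilon_0$ cutoff. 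Combining the Christoffel--Darboux identity with the Taylor expansion $\sin t/t = 1 - t^2/6 + O(t^4)$ then yields, for $x,y$ in the bulk with $|x-y|=O(n^{-4/3})$,
\[
K_n(x,x)K_n(y,y)-K_n(x,y)^2 \;=\; \frac{\pi^2 n^4 \Psi(x)^4}{3}\,(x-y)^2\bigl(1+o(1)\bigr).
\]
Integrating this two-point correlation over $\{(x,y): x\in I,\;0<y-x<cn^{-4/3}\}$ produces a first factorial moment equal to $\frac{\pi^2 c^3}{9}\int_I\Psi^4$, which matches $\mathbb{E}\chi([0,c]\times I)=\frac{\pi^2}{3}\bigl(\int_0^c u^2\,du\bigr)\int_I \Psi(x)^4\,dx$.

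For the higher factorial moments one applies the same Laplace-type expansion to the $2k\times 2k$ determinants of $K_n$ that arise from counting $k$ simultaneous small gaps: the near-diagonal block contributions factorize into $k$ independent copies of the two-point computation above, while the genuinely off-diagonal contributions vanish in the limit because clusters of small gaps must be spatially well separated (a quantitative expression of eigenvalue repulsion, drawn from the decay of $|K_n(x,y)|$ away from the diagonal and the bound $|K_n(x,y)|^2 \le K_n(x,x)K_n(y,y)$). This decoupling produces the intensity $\frac{\pi^2}{3}\int_A u^2\,du\,\int_I \Psi(x)^4\,dx$ and completes the weak convergence. The principal obstacle is not the combinatorics of the moment expansion --- which parallels the Ginibre and Wishart arguments already established earlier in the paper --- but rather producing a bulk sine-kernel asymptotic sharp enough in both the macroscopic variable $x\in(a+\epsilon_0, b-\epsilon_0)$ and the mesoscopic scale $|x-y|\lesssim n^{-4/3}$ to justify the cubic cancellation responsible for the $(x-y)^2$ leading term; for general analytic $V$ this uniformity is available from the Riemann--Hilbert machinery, but careful error tracking through the Taylor expansion is the technical heart of the argument.
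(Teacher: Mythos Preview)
Your proposal is correct and follows essentially the same route as the paper: reduce to factorial moments via the Soshnikov $s$-modified process, use the bulk sine-kernel asymptotic from \cite{15} to obtain the $2\times2$ determinant expansion $K_n(x,x)K_n(y,y)-K_n(x,y)^2=\tfrac{\pi^2}{3}n^4u^2\Psi(x)^4+O(n)$ (the paper's Lemma~\ref{lemma5.1}), and handle the higher $2k\times2k$ determinants by block factorization with off-diagonal control coming from $K_n(x,y)=O(1)$ for $|x-y|$ bounded below together with derivative bounds $\partial^jK_n=O(n^{j+1})$ (the paper's Lemma~\ref{lemma5.2}, quoted from \cite{4}). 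The only point on which the paper is slightly more explicit is that the ``off-diagonal decay'' you invoke is really the bound $K_n(x,y)=O(1)$ for separated $x,y$ combined with row/column subtractions, not genuine decay of $K_n$; your sketch would benefit from stating the three separate estimates (pointwise limit, uniform bound on the good set $\Omega$, and negligibility of the clustered set $\overline\Omega$) as distinct steps rather than folding them into a single sentence.
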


Once again we can derive the $k$ smallest eigenvalue gaps from Theorem \ref{thm1.4}. Fix $\epsilon_0>0.$ Let $t_1^{(n)}Ê\leq \ldots \leq t_k^{(n)}$ be the $k$ smallest eigenvalue gaps of the form $\lambda_{i+1}-\lambda_i$ such that $\lambda_i\in(a+\epsilon_0, b-\epsilon_0)$. Moreover let 
\[
\tau_\ell^{(n)} = \biggl\{\frac{\pi^2}{9}\int_I\Psi(x)^4dx\biggl\}^{1/3}t^{(n)}_\ell, \ell = 1, 2\ldots, k. 
\]
We have the following corollary which describes the limiting distribution of $\tau_k^{(n)}$. 
\begin{corollary}\label{col3}
For any $0<x_1<y_1<\ldots<x_k<y_k$, then as $n\to\infty$
\[
\mathbb{P}(x_\ell<n^{4/3}\tau_\ell^{(n)}<y_\ell, 1\leq\ell\leq k) \longrightarrow \biggl(e^{-x_k^3}-e^{-y_k^3}\biggl)\prod_{\ell=1}^{k-1}(y_\ell^3-x_\ell^3).
\]
In particular, the $k$-th smallest normalized gap converges in distribution to $\tau_k$ with density
\[
\mathbb{P}(\tau_k\in dx) \propto x^{3k-1}e^{-x^3}dx.
\]
\end{corollary}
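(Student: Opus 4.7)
The plan is to deduce Corollary \ref{col3} from Theorem \ref{thm1.4} as a routine consequence of the Poisson limit, using only standard order-statistic identities for Poisson processes on the real line; the substantive input is Theorem \ref{thm1.4} itself.

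First I would project the limiting Poisson process $\chi$ onto its first coordinate by integrating its intensity $\frac{\pi^2}{3}u^2\Psi(x)^4\,du\,dx$ over $x\in I:=(a+\epsilon_0, b-\epsilon_0)$. This yields a Poisson process on $\mathbb{R}^+$ with intensity $3\alpha u^2\,du$, where $\alpha := \frac{\pi^2}{9}\int_I\Psi(x)^4\,dx$ is exactly the cube of the normalizing constant appearing in the definition of $\tau_\ell^{(n)}$. Rescaling by $v=\alpha^{1/3}u$ converts this into intensity $3v^2\,dv$; write $\Pi$ for the corresponding Poisson process on $\mathbb{R}^+$, noting that its expected number of points in $(x, y)$ is exactly $y^3-x^3$.

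Next I would rewrite the event $\{x_\ell<n^{4/3}\tau_\ell^{(n)}<y_\ell,\ 1\le\ell\le k\}$ as a conjunction of counts of $\chi^{(n)}$ on disjoint rectangular strips of the form $J\times I$: namely, $0$ points in $[0, x_1/\alpha^{1/3}]\times I$; exactly $1$ point in $(x_\ell/\alpha^{1/3}, y_\ell/\alpha^{1/3})\times I$ for each $\ell<k$; $0$ points in each gap strip $(y_\ell/\alpha^{1/3}, x_{\ell+1}/\alpha^{1/3})\times I$; and at least $1$ point in $(x_k/\alpha^{1/3}, y_k/\alpha^{1/3})\times I$. Since the limit intensity is absolutely continuous with respect to Lebesgue measure, the boundaries of these strips are $\chi$-null and hence continuity sets; Theorem \ref{thm1.4} therefore delivers joint convergence of the corresponding counts to independent Poisson variables matching those of $\Pi$. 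Multiplying the individual Poisson probabilities and telescoping the exponentials (which collapse to $e^{-x_k^3}$) gives exactly $(e^{-x_k^3}-e^{-y_k^3})\prod_{\ell=1}^{k-1}(y_\ell^3-x_\ell^3)$.

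For the marginal density of $\tau_k$, I would observe that the map $v\mapsto v^3$ sends $\Pi$ to a unit-rate Poisson process on $\mathbb{R}^+$, so the $k$-th ordered point $Z_k$ of $\Pi$ satisfies $Z_k^3\sim\mathrm{Gamma}(k,1)$ with density $\frac{1}{(k-1)!}u^{k-1}e^{-u}$; the change of variables $x=u^{1/3}$ immediately yields a density for $Z_k$ proportional to $x^{3k-1}e^{-x^3}$. The only delicate point in the whole argument is the passage from weak convergence of the random measure $\chi^{(n)}$ in Theorem \ref{thm1.4} to joint convergence of the specific counts listed above, which is standard once one notes that the rectangular boundaries are continuity sets; I do not anticipate any substantive obstacle in the corollary itself beyond what Theorem \ref{thm1.4} has already resolved.
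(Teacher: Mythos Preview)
Your proposal is correct and follows essentially the same route as the paper's (implicit) proof: the paper derives Corollary~\ref{col1} by exactly this decomposition into disjoint count events for $\chi^{(n)}$ and then multiplies the resulting Poisson probabilities, and Corollary~\ref{col3} is the same argument with cubes replacing fourth powers (without the extra $\widetilde{t}=t$ step, which was only needed to handle the artificial ordering in the complex Ginibre case). Your Gamma/unit-rate-Poisson shortcut for the marginal of $\tau_k$ is a minor stylistic variant of the paper's direct integration of the joint density, but the content is identical.
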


%%%%%%%%%%%%%%%%%%%%%%%%%%%%%%%%%%%%%%%%%%%%%%
%%%%%%%%%%%%%%%%%%%%%%%%%%%%%%%%%%%%%%%%%%%%%%
%%%%%%%%%%%%%%%%%%%%%%%%%%%%%%%%%%%%%%%%%%%%%%
%%%%%%%%%%%%%%%%%%%%%%%%%%%%%%%%%%%%%%%%%%%%%%
%%%%%%%%%%%%%%%%%%%%%%%%%%%%%%%%%%%%%%%%%%%%%%
%%%%%%%%%%%%%%%%%%%%%%%%%%%%%%%%%%%%%%%%%%%%%%
%%%%%%%%%%%%%%%%%%%%%%%%%%%%%%%%%%%%%%%%%%%%%%
%%%%%%%%%%%%%%%%%%%%%%%%%%%%%%%%%%%%%%%%%%%%%%
%%%%%%%%%%%%%%%%%%%%%%%%%%%%%%%%%%%%%%%%%%%%%%
%%%%%%%%%%%%%%%%%%%%%%%%%%%%%%%%%%%%%%%%%%%%%%
%%%%%%%%%%%                              Section 2                                  %%%%%%%%%%%%%
%%%%%%%%%%%%%%%%%%%%%%%%%%%%%%%%%%%%%%%%%%%%%%
%%%%%%%%%%%%%%%%%%%%%%%%%%%%%%%%%%%%%%%%%%%%%%
%%%%%%%%%%%%%%%%%%%%%%%%%%%%%%%%%%%%%%%%%%%%%%
%%%%%%%%%%%%%%%%%%%%%%%%%%%%%%%%%%%%%%%%%%%%%%
%%%%%%%%%%%%%%%%%%%%%%%%%%%%%%%%%%%%%%%%%%%%%%
%%%%%%%%%%%%%%%%%%%%%%%%%%%%%%%%%%%%%%%%%%%%%%
%%%%%%%%%%%%%%%%%%%%%%%%%%%%%%%%%%%%%%%%%%%%%%
%%%%%%%%%%%%%%%%%%%%%%%%%%%%%%%%%%%%%%%%%%%%%%
%%%%%%%%%%%%%%%%%%%%%%%%%%%%%%%%%%%%%%%%%%%%%%
%%%%%%%%%%%%%%%%%%%%%%%%%%%%%%%%%%%%%%%%%%%%%%
%%%%%%%%%%%%%%%%%%%%%%%%%%%%%%%%%%%%%%%%%%%%%%
%%%%%%%%%%%%%%%%%%%%%%%%%%%%%%%%%%%%%%%%%%%%%%
%%%%%%%%%%%%%%%%%%%%%%%%%%%%%%%%%%%%%%%%%%%%%%
\section{Outline of the Proof}
Now we denote $\chi^{(n)}$ as defined in (\ref{1.1}). As we will observe from Proposition \ref{prop1}, in order to prove $\chi^{(n)}$ converges to a Poisson point process $\chi$, it remains to prove, for any bounded Borel set $A\subset \mathbb{R}_+$ and $I\subset\mathbb{C}$, we have 
\begin{equation}\label{s2.1}
\chi^{(n)}(A\times I)\to\chi(A\times I) \text{ weakly. }
\end{equation}
However, even proving (\ref{s2.1}) is not easy. Following the ideas by Soshnikov in \cite{5}, we consider the $s$-modified point process. That is, we define 
\[
\widetilde{\xi}^{(n)} := \sum_* \delta_{\lambda_i}
\]
where the we only keep the eigenvalues $\lambda_i$ for which there is exactly \emph{one} eigenvalue $\lambda_j$ lying in $\lambda_i + n^{-3/4}B$ (recall $B = \{u\in\mathbb{C}: |u|\in A, u\succeq0\}$). 

Here's the rationale for $\widetilde{\xi}$. Denote $N = \chi^{(n)}(A\times I)$ and $N_1 = \widetilde{\xi}^{(n)}(I)$. Then, $N_2\neq N$ implies that at least three eigenvalues are clustered together, which is intuitively a rare event. Hence we should have $\chi^{(n)}(A\times I) = \widetilde{\xi}^{(n)}(I)$ asymptotically almost surely. This is proved in Lemma \ref{lemma2}. Now in order to prove \ref{s2.1}, we only need to prove $\widetilde{\xi}^{(n)}(I)\to\chi(A\times I)$ weakly. This is a simplification, because rather than analyzing a two dimensional process ${\chi}^{(n)}$, we can turn to work with a one dimensional process $\widetilde{\xi}^{(n)}$ instead.

The process $\widetilde{\xi}$ may seem complicated, but its $k$ point correlation function $\widetilde{\rho}_k(\lambda_1, \ldots, \lambda_k)$ can be expressed explicitly using the inclusion-exclusion argument, see (\ref{eqn5}).
Again, by Proposition \ref{prop1} the problem is reduced to prove that the corresponding factorial moment matches each other, i.e., for any integer $k>0$, as $n\to\infty$
\begin{equation}\label{s2.2}
\mathbb{E}\biggl[\frac{\xi^{(n)}(I)!}{(\xi^{(n)}(I)-k)!}\biggl] = \int_{I^k}\widetilde{\rho}_k(\lambda_1, \ldots, \lambda_k)d\lambda_1\ldots d\lambda_k \to \int_{I^k}\biggl(\frac{1}{\pi^2}|u|^2du\biggl)^k\cdot\prod_{i=1}^k\mathbbm{1}_{|\lambda_i|<1}.
\end{equation}
Indeed, the point-wise convergence of $\widetilde{\rho}_k$ to the integrand of the right hand side of (\ref{s2.2}) is proved in Lemma \ref{lemma3}. Now the only thing to do is to swap the limit and integration. To do this, we prove that $\widetilde{\rho}_k$ can be uniformly bounded in the majority of the region (see Lemma \ref{lemma4}) so that we can use the Lebesgue's dominated convergence theorem. On the other hand, although the $\widetilde{\rho}_k$ will explode in the rest of the minor region, we prove that since the minor region is so small, the integration of $\widetilde{\rho}_k$ on that will have a negligible effect (see Lemma \ref{lemma5}). Combining all these, we finished our proof.

%%%%%%%%%%%%%%%%%%%%%%%%%%%%%%%%%%%%%%%%%%%%%%
%%%%%%%%%%%%%%%%%%%%%%%%%%%%%%%%%%%%%%%%%%%%%%
%%%%%%%%%%%%%%%%%%%%%%%%%%%%%%%%%%%%%%%%%%%%%%
%%%%%%%%%%%%%%%%%%%%%%%%%%%%%%%%%%%%%%%%%%%%%%
%%%%%%%%%%%%%%%%%%%%%%%%%%%%%%%%%%%%%%%%%%%%%%
%%%%%%%%%%%%%%%%%%%%%%%%%%%%%%%%%%%%%%%%%%%%%%
%%%%%%%%%%%%%%%%%%%%%%%%%%%%%%%%%%%%%%%%%%%%%%
%%%%%%%%%%%%%%%%%%%%%%%%%%%%%%%%%%%%%%%%%%%%%%
%%%%%%%%%%%%%%%%%%%%%%%%%%%%%%%%%%%%%%%%%%%%%%
%%%%%%%%%%%%%%%%%%%%%%%%%%%%%%%%%%%%%%%%%%%%%%
%%%%%%%%%%%                              Section 3                                  %%%%%%%%%%%%%
%%%%%%%%%%%%%%%%%%%%%%%%%%%%%%%%%%%%%%%%%%%%%%
%%%%%%%%%%%%%%%%%%%%%%%%%%%%%%%%%%%%%%%%%%%%%%
%%%%%%%%%%%%%%%%%%%%%%%%%%%%%%%%%%%%%%%%%%%%%%
%%%%%%%%%%%%%%%%%%%%%%%%%%%%%%%%%%%%%%%%%%%%%%
%%%%%%%%%%%%%%%%%%%%%%%%%%%%%%%%%%%%%%%%%%%%%%
%%%%%%%%%%%%%%%%%%%%%%%%%%%%%%%%%%%%%%%%%%%%%%
%%%%%%%%%%%%%%%%%%%%%%%%%%%%%%%%%%%%%%%%%%%%%%
%%%%%%%%%%%%%%%%%%%%%%%%%%%%%%%%%%%%%%%%%%%%%%
%%%%%%%%%%%%%%%%%%%%%%%%%%%%%%%%%%%%%%%%%%%%%%
%%%%%%%%%%%%%%%%%%%%%%%%%%%%%%%%%%%%%%%%%%%%%%
%%%%%%%%%%%%%%%%%%%%%%%%%%%%%%%%%%%%%%%%%%%%%%
%%%%%%%%%%%%%%%%%%%%%%%%%%%%%%%%%%%%%%%%%%%%%%
%%%%%%%%%%%%%%%%%%%%%%%%%%%%%%%%%%%%%%%%%%%%%%
\section{Proofs for the Ginibre Ensemble Case}
In this section we prove Theorem \ref{thm1} and Corollary \ref{col1}. 
We know from \cite{6} that for $X_n$ being the Ginibre ensemble, the eigenvalues of $X_n/\sqrt{n}$ has joint probability distribution
\[
p(\lambda_1, \ldots, \lambda_n) = C_n\exp\biggl(-n\sum_{i=1}^n|\lambda_i|^2\biggl)\prod_{1\leq i<j\leq n}|\lambda_i-\lambda_j|^2
\]
where $C_n$ is a constant. Moreover we can define its $k$-point correlation function as
\[
\rho_k(\lambda_1, \ldots, \lambda_k) := \frac{n!}{(n-k)!}\int_{-\infty}^{\infty}\ldots\int_{-\infty}^{\infty}p(\lambda_1, \ldots, \lambda_n)d\lambda_{k+1}\ldots d\lambda_n.
\]
Intuitively, $\rho_k(\lambda_1, \ldots, \lambda_k)d\lambda_1\ldots d\lambda_k$ is the expectation of the number of $k$ tuples of eigenvalues in $(\lambda_1+d\lambda_1)\cup\ldots\cup(\lambda_k+d\lambda_k)$. Note $\rho_k$ is \emph{not} a probability density function as the total integration is not unity. The following property is quite useful in this paper. If we denote $N$ as the number of eigenvalues in some interval $I$, then for any integer $k\leq n$ we have
\begin{equation}\label{v3.1}
\mathbb{E}\biggl[\frac{N!}{(N-k)!}\biggl] = \int_{I^k}\rho_k(\lambda_1, \ldots, \lambda_k)d\lambda_1, \ldots, \lambda_k.
\end{equation}
For a more detailed explanation, please refer to \cite{6}. 

By using the technique of orthogonal polynomials, we can write $\rho_k$ in a determinantal form. 
\[
\rho_k(\lambda_1, \ldots, \lambda_k) = n^k\pi^{-k}\exp\biggl(-n\sum_{j=1}^k|\lambda_j|^2\biggl)\det(K_n(\lambda_i\lambda_j^*))_{1\leq i, j\leq k}.
\]
Here 
\[
K_n(x) = \sum_{\ell = 0}^{n-1}\frac{n^\ell x^\ell}{\ell!}.
\]

Before we begin proving the result, we first analyze the behavior of $K_n(x)$. This will play an essential role in later proofs. In short, for $x$ being real, $e^{-nx}K(x)\to 1$ if $x\in(0, 1)$ and $e^{-nx}K(x)\to 0$ for $x>1$.
For $x$ being complex, the following lemma gives a precise bound on the decay rate.

%%%%%%%%%%%%%%%%%%%%%%%%%%%%%%%%%%%%%%%%%%%%%

\begin{lemma}\label{good}
Let $z = z_n = x + n^{-3/4}c$. Here $x\in\mathbb{R}$ and $c\in\mathbb{C}$ are independent of $n$. Moreover let
\[
K_n(z) = \sum_{\ell=0}^{n-1}\frac{n^\ell z^\ell}{\ell!} := e^{nz}( 1 - R_n(z)),
\]
Then the following inequalities are valid.
\begin{enumerate}
\item
If $|z|\leq 0.02$, then
\[
|R_n(z)| \leq \sqrt{\frac{n}{2\pi}}0.06^n.
\]
\item
If $0.01<|z|\leq1$, then there exists constants $C$ and $N$ (independent of $z$ and $n$) such that for all $n>N$, 
\[
|R_n(z)| \leq C{\sqrt{n}}\Bigl(|z|e^{1-|z|}\Bigl)^n.
\]
%\item
%If  $|z|\geq1$, then
%\[
%|1-R_n(z)|\leq \sqrt{\frac{n}{2\pi}}\Bigl(|z|e^{-(|z|-1)}\Bigl)^n.
%\]
\item 
If $|z|\geq 1$, then there exists constants $C$ and $N$ (independent of $z$ and $n$) such that for all $n>N$, 
\[
|1-R_n(z)|\leq C\Bigl(|z|e^{-(|z|-1)}\Bigl)^n.
\]
\end{enumerate}
\end{lemma}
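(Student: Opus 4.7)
The plan is to derive an integral representation of $R_n(z)$ and then estimate it case-by-case. Explicit differentiation of $e^{-nz}K_n(z)$ gives $\frac{d}{dz}\bigl[e^{-nz}K_n(z)\bigr] = -\frac{n^n z^{n-1}}{(n-1)!}\,e^{-nz}$; integrating from $0$, where $K_n(0)=1$, produces
\[
R_n(z) \;=\; \frac{n^n}{(n-1)!}\int_0^z s^{n-1}e^{-ns}\,ds,
\]
and deforming the contour to $+\infty$ (legitimate since the integrand is entire) yields the complementary identity $1 - R_n(z) = \frac{n^n}{(n-1)!}\int_z^\infty s^{n-1}e^{-ns}\,ds$, which I will use in Case 3. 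Stirling's formula gives $\frac{n^n}{(n-1)!} = (1+o(1))\sqrt{n/(2\pi)}\,e^n$, supplying the $e^n$ factor that must pair with $|z|^n$ and with the exponential decay along the chosen path.

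For Cases 1 and 2 I parametrise $[0,z]$ by $s = tz$, giving $|R_n(z)|\leq \tfrac{n^n|z|^n}{(n-1)!}\int_0^1 t^{n-1}e^{-nt\Re z}\,dt$. In Case 1 the crude bounds $e^{-nt\Re z}\leq e^{0.02n}$ and $\int_0^1 t^{n-1}\,dt = 1/n$ combine with Stirling to give a base $|z|\cdot e\cdot e^{0.02}\leq 0.055 < 0.06$. In Case 2 the elementary inequality $(1-u)^{n-1}\leq e^{-(n-1)u}$ applied after $u = 1 - t$ yields $\int_0^1 t^{n-1}e^{-nt\Re z}\,dt \leq e^{-n\Re z}/((n-1)(1-\Re z))$ whenever $1-\Re z \gg 1/n$, and Stirling then gives $|R_n(z)|\leq C(|z|e^{1-\Re z})^n/(\sqrt n(1-\Re z))$. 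The factor $(1-\Re z)^{-1}$ grows only to order $\sqrt n$ in the window $1-\Re z \gtrsim 1/\sqrt n$, which produces the claimed $\sqrt n$ prefactor; the residual strip $1-\Re z = O(1/\sqrt n)$ is handled by a direct Laplace estimate which gives an $\int_0^1$ of order $e^{-n\Re z}/\sqrt n$ and $|R_n(z)| = O(1)$. Finally, the structural hypothesis $z = x + n^{-3/4}c$ forces $|z|^2 - (\Re z)^2 = (\Im z)^2 = O(n^{-3/2})$, so $e^{n(|z|-\Re z)} = 1+o(1)$ and $\Re z$ may be replaced by $|z|$ in the exponent at negligible cost.

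For Case 3 ($|z|\geq 1$) I integrate the complementary representation along the horizontal ray $s = z + t$, $t\geq 0$. Using $|s|^{n-1}\leq |z|^{n-1}(1 + t/|z|)^{n-1}$ together with $(1 + u/(n|z|))^{n-1}\leq e^{u/|z|}$ (from $\log(1+x)\leq x$) after the substitution $u = nt$, the integral collapses to $\int_0^\infty e^{u(1/|z|-1)}\,du = |z|/(|z|-1)$ for $|z|>1$; combining with Stirling produces $|1 - R_n(z)|\leq C(|z|e^{1-\Re z})^n/(\sqrt n(|z|-1))$, which matches the claimed bound away from $|z|=1$ after $\Re z$ is replaced by $|z|$ as above. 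The thin window $1\leq |z|\leq 1 + C/\sqrt n$, where the prefactor $(|z|-1)^{-1}$ blows up, is covered separately by the crude estimate $|1 - R_n(z)| = e^{-n\Re z}|K_n(z)|\leq e^{n(|z|-\Re z)}$ (via $|K_n(z)|\leq K_n(|z|)\leq e^{n|z|}$), which is bounded while $(|z|e^{-(|z|-1)})^n$ stays bounded below by a positive constant in that window.

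The main technical obstacle is exactly the transition $|z|\to 1$: the decay factor $|z|e^{1-|z|}$ attains its maximum value $1$ there and the Laplace/saddle method degenerates. In Case 2 this degeneration is absorbed by the $\sqrt n$ prefactor of the stated bound; in Case 3 it forces one to paste the incomplete-gamma-type integral bound together with the crude estimate above across an $O(n^{-1/2})$-wide window around the unit circle.
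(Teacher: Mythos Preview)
Your argument is correct. Parts 1 and 2 are essentially the paper's proof: both start from the same Taylor-remainder integral $R_n(z)=\frac{n^n}{(n-1)!}\int_0^z s^{n-1}e^{-ns}\,ds$, parametrise by $s=tz$, and then estimate $\int_0^1 t^{n-1}e^{-nt\Re z}\,dt$. The paper bounds this integral crudely by the maximum of the integrand (splitting on whether the critical point $t^*=(n-1)/(n\Re z)$ lies inside $[0,1]$), whereas you use $(1-u)^{n-1}\le e^{-(n-1)u}$ and a separate Laplace estimate near $\Re z\approx 1$; these are cosmetic differences and your bookkeeping in fact yields a slightly sharper constant (your prefactor is $O(1)$ rather than the stated $O(\sqrt n)$). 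Both proofs then invoke the same observation that $n(|z|-\Re z)=O(n^{-1/2})$ under the hypothesis $\Im z=O(n^{-3/4})$, $|z|$ bounded away from $0$, to replace $\Re z$ by $|z|$.

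Part 3 is where you genuinely diverge. The paper abandons the integral representation and instead bounds the partial sum $e^{-n\Re z}\sum_{\ell<n}(n|z|)^\ell/\ell!$ directly: writing $A_\ell=(n|z|)^\ell/\ell!$ and $A_\ell=c_\ell A_{n-1}$, it proves the combinatorial estimate $\sum_{\ell<n}c_\ell\le 2\sqrt n$ by splitting at $\ell=n-\sqrt n$. You instead pass to the complementary incomplete-gamma integral $1-R_n(z)=\frac{n^n}{(n-1)!}\int_z^\infty s^{n-1}e^{-ns}\,ds$, push the contour along the horizontal ray $s=z+t$, and bound via $|z+t|\le|z|(1+t/|z|)$ and $(1+u/(n|z|))^{n-1}\le e^{u/|z|}$. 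Your route is more uniform with Parts 1--2 and makes the $|z|\to 1$ degeneration transparent (it shows up as the pole $1/(|z|-1)$), but it forces you to patch the window $|z|-1=O(n^{-1/2})$ with the separate crude bound $|1-R_n(z)|\le e^{n(|z|-\Re z)}$; the paper's series argument handles all $|z|\ge 1$ at once without a case split. Either approach is perfectly adequate here.
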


%%%%%%%%%%%%%%%%%
%
%  Proof 
% 
%%%%%%%%%%%%%%%%%
\begin{proof} 

\textsc{Part 1.}

By applying Taylor's theorem in the complex setting on $f(z) := e^{nz}$ we have
\begin{eqnarray*}
R_n(z) & = & \int_0^z \frac{(z-t)^{n-1}n^ne^{-n(z-t)}}{(n-1)!}dt.
\end{eqnarray*}
Here the integral is along any curve connecting $0$ and $z$. In particular, we can set the curve to be the straight line between the two points.
By Stirling's theorem, $n! = \sqrt{2\pi n}(n/e)^ne^{\beta_n}$ for some $\beta_n \in (12n+1)^{-1}, (12n)^{-1})$. Thus 
\begin{eqnarray*}
R_n(z) & = & \sqrt{\frac{n}{2\pi}}e^{n-\beta_n}\int_0^z(z-t)^{n-1}e^{-n(z-t)}dt \\
& = & \sqrt{\frac{n}{2\pi}}e^{n-\beta_n}\int_0^zt^{n-1}e^{-nt}dt \\
& = &  \sqrt{\frac{n}{2\pi}}z^ne^{n-\beta_n}\int_0^1t^{n-1}e^{-ntz}dt. 
\end{eqnarray*}
Hence
\begin{eqnarray*}
|R_n(z)| &\leq & \sqrt{\frac{n}{2\pi}}|z|^ne^n\int_0^1t^{n-1}e^{-ntu}dt \\
\end{eqnarray*}
Here we denote $u = \Re(z)$ and $v = \Im(z)$. If $|z|\leq 0.02,$ then $e^{-ntu}\leq e^{n|z|}\leq e^{0.02n}$. Hence
\[
|R_n(z)| \leq \sqrt{\frac{n}{2\pi}}e^{1.02n}|z|^n \leq \sqrt{\frac{n}{2\pi}}\Bigl(0.02e^{1.02}\Bigl)^n\leq \sqrt{\frac{n}{2\pi}}0.06^n.
\]

\textsc{Part 2.}
If $0.01<|z|\leq 1$, then for sufficiently large $n$, $u = \Re(z)>0.005>0$. Thus from part (1) of the lemma
\begin{eqnarray*}
|R_n(z)| &\leq & \sqrt{\frac{n}{2\pi}}|z|^ne^n\int_0^1t^{n-1}e^{-ntu}dt = \sqrt{\frac{n}{2\pi}}|z|^ne^n\int_0^1g(t)dt \\
\end{eqnarray*}
%Denote $z = |z|e^{i\theta}$. Then $\theta \sim O(n^{-3/4})$ and $u = |z|\cos\theta$. Hence
%\begin{eqnarray*}
%|R_n(z)| & \leq & \frac{1}{\sqrt{2\pi n}}|z|^ne^{n(1-|z|)}\cdot e^{n|z|(1-\cos\theta)}.
%\end{eqnarray*}
%Now $|z|<100$ is bounded above and $1-\cos\theta\sim O(\theta^2) = O(n^{-3/2})$, we obtain that  $e^{n|z|(1-\cos\theta)}\leq e^{100n(1-\cos\theta)}\to 1$ as $n\to \infty$. 
where $g(t) = t^{n-1}e^{-ntu}$. By standard analysis, $g(t)$ achieves its maximum at $t^* = (n-1)/(nu)$. Thus, if $t^*\leq1$, i.e., $n-1\leq nu$,
\begin{eqnarray*}
|R_n(z)| & \leq & \sqrt{\frac{n}{2\pi}}|z|^ne^n g(t^*) = \sqrt{\frac{n}{2\pi}}|z|^n\exp\biggl(1+(n-1)\log\frac{n-1}{nu}\biggl) \\
& \leq &  \sqrt{\frac{n}{2\pi}}e|z|^n \leq \sqrt{\frac{n}{2\pi}}e|z|^ne^{n(1-|z|)}.
\end{eqnarray*}
On the other hand, if $t^*>1$, then 
\begin{eqnarray*}
|R_n(z)| & \leq & \sqrt{\frac{n}{2\pi}}|z|^ne^n g(1) = \sqrt{\frac{n}{2\pi}}|z|^ne^{n(1-u)} \\
& = &  \sqrt{\frac{n}{2\pi}}|z|^ne^{n(1-|z|)}\cdot e^{n(|z|-u)}.
\end{eqnarray*}
Let $z = |z|e^{i\theta}$. Then $\exp\Big(n(|z|-u)\Big) = \exp\Big(n|z|(1-\cos\theta)\Big)$. Since\\ $\theta = \arcsin(\Im(z)/|z|) = \mathcal{O}(n^{-3/4}/|z|)$, we obtain $n|z|(1-\cos\theta) \sim n|z|\theta^2/2 \sim \mathcal{O}(n^{-1/2}/|z|)$.
Hence $\exp\Big(n(|z|-u)\Big)\to 1$ as $n\to\infty$.

Thus we have complete proving the existence of these constants $N$ and $C$.

% Now since $|z| \leq|u| + |v|$, we obtain
%\begin{eqnarray*}
%|R_n(z)| & \leq & \sqrt{\frac{n}{2\pi}}|z|^{n}e^{n(1-|z| + |v|)} \\
%&\leq & \sqrt{\frac{n}{2\pi}}\Bigl(|z|e^{1-|z|}\Bigl)^n e^{cn^{1/4}} \\
%& = & \sqrt{\frac{n}{2\pi}}\Bigl(|z|e^{1-|z|+cn^{-3/4}}\Bigl)^n.
%\end{eqnarray*}
%\item
%Now let's consider the second case. We have that 

\textsc{Part 3.}
%\begin{eqnarray*}
%|1-R_n(x)| & = & \Biggl|e^{-nz}\sum_{\ell=0}^{n-1}\frac{n^\ell z^\ell}{\ell!}\Biggl| \leq e^{-nu}\sum_{\ell=0}^{n-1}\frac{n^\ell |z|^\ell}{\ell!} \\
%&\leq& e^{-nu}\biggl(\sum_{\ell=0}^{n-1}\frac{e^\ell |z|^\ell}{\sqrt{2\pi}} + 1-\frac{1}{\sqrt{2\pi}} \biggl)\\
%& = & e^{-nu}\biggl(\frac{(e|z|)^n-1}{2\pi(e|z|-1)} + 1-\frac{1}{\sqrt{2\pi}}\biggl) \\
%&\leq& C \Bigl(|z|e^{-(u-1)}\Bigl)^n. 
%\end{eqnarray*}
If $|z|\geq1$, the quantity $|1-R_n(z)|$ has the following upper bound (recall $u = \Re(z)$).
\[
|1-R_n(z)|  =  \Biggl|e^{-nz}\sum_{\ell=0}^{n-1}\frac{n^\ell z^\ell}{\ell!}\Biggl| \leq e^{-nu}\sum_{\ell=0}^{n-1}\frac{n^\ell |z|^\ell}{\ell!}
\]
Let $A_\ell = n^{\ell}|z|^{\ell}/\ell!$ be the $\ell$-th term in the above summation. Then
\[
A_{\ell} \leq \frac{n-1}{n}\cdot\frac{n-2}{n}\cdot\cdots\cdot\frac{\ell+1}{n}\cdot A_{n-1} := c_\ell A_{n-1}
\]
Thus $|1-R_n(x)|\leq e^{-nu}A_{n-1}\cdot\sum_{\ell=0}^{n-1}c_\ell$. Our next target is to prove $\sum_{\ell = 0}^{n-1}c_\ell /\sqrt{n} $ is bounded above. If this can be done, then by Stirling's formula
\begin{eqnarray*}
|1-R_n(z)| &\leq & \mathcal{O}(1)\cdot e^{-nu}\frac{n^n|z|^n}{n!}\cdot\sqrt{n} \leq \mathcal{O}(1)\cdot \Bigl(|z|e^{1-u}\Bigl)^n \\
& =& \mathcal{O}(1)\cdot \Bigl(|z|e^{1-|z|}\Bigl)^n\cdot e^{n(|z|-u)}.
\end{eqnarray*}
By part (b) of the lemma, $\exp\Big(n(|z|-u)\Big)\to 1$ as $n\to\infty$. This proves the existence of the constants $N$ and $C$.

The only thing left is to prove $\sum_{\ell=0}^{n-1}c_\ell/\sqrt{n}$ is bounded above. Indeed, for $\ell\geq n-\sqrt{n}, c_\ell\leq 1$. For $\ell < n- \sqrt{n}$, $c_\ell \leq (1-1/\sqrt{n})^{n-\sqrt{n}-\ell}$. Thus we get
\begin{eqnarray*}
\sum_{\ell = 0}^{n-1}c_\ell & = & \sum_{\ell \geq n-\sqrt{n}}c_\ell + \sum_{\ell < n-\sqrt{n}}c_\ell 
 \leq  \sqrt{n} + \sum_{\ell = 0}^{\infty}\biggl(1-\frac{1}{\sqrt{n}}\biggl)^\ell \leq 2\sqrt{n}.
\end{eqnarray*}
\end{proof}

%%%%%%%%%%%%%%%%%%%%%%%%%%%%%%%%%%%%%%%%%%%%%

\begin{remark}\label{remark1}
At the end of the proof of part 2 of Lemma (\ref{good}), we actually proved a quite useful result. That is, $e^{n(\Re(\xi) - |\xi|)} \to 1$ uniformly if $\xi$ is bounded away from zero and $\Im(\xi) = \mathcal{O}(n^{-3/4})$. This will be used for several times later on. 
\end{remark}

\begin{remark}\label{remark2}
Since the one-point correlation function is 
\[
\rho_1(\lambda_1) = n\pi^{-1}\exp(-n|\lambda_1|^2)K_n(|\lambda_1|^2),
\] 
From Lemma (\ref{good}) we know that $\rho_1(\lambda_1)/n$ converges to $\pi^{-1}$ if $|\lambda_1|<1$ and zero if $|\lambda_1|>1$. This is in agreement with the famous circular law. 
\end{remark}

In order to prove the convergence of the random point process $\chi^{(n)}$, 
we give the following proposition. This is a very slight modification of the Proposition 2.1 in \cite{1}. The proofs can also be found there.
\begin{proposition}\label{prop1}
Let $\chi^{(n)} = \sum_{i=1}^{k_n}\delta_{X_{i, n}}$ be a sequence of point processes on $\mathbb{R}^+\times\mathbb{C}$, and $\chi$ a Poisson point process on $\mathbb{R}^+\times\mathbb{C}$ with intensity $\mu$ having no atoms and $\sigma$-finite. Assume that for any bounded intervals $A$ and $I$ and all possible $k\geq1$
\begin{equation}\label{eqn1}
\lim_{n\to\infty}\mathbb{E}\biggl(\frac{\chi^{(n)}(A\times I)!}{(\chi^{(n)}(A\times I)-k)!}\biggl) = \mu(A\times I)^k.
\end{equation}
Then the sequence of point processes $\chi^{(n)}$ converges in distribution to $\chi$.
\end{proposition}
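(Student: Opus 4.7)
I would follow the classical method-of-moments approach for convergence of simple point processes to a Poisson limit, as carried out in Ben Arous--Bourgade \cite{1}.

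First, I would fix a single bounded product set $B = A\times I$ and argue that hypothesis (\ref{eqn1}), which asserts $\mathbb{E}[(\chi^{(n)}(B))_k]\to \mu(B)^k$ for every $k$, forces $\chi^{(n)}(B)$ to converge in distribution to $\mathrm{Poisson}(\mu(B))$. This is exactly the method of moments: since Poisson laws are uniquely determined by their moment sequence (their probability generating function is entire), matching all factorial moments suffices. In particular, this gives convergence of the void probability $\mathbb{P}(\chi^{(n)}(B)=0)\to e^{-\mu(B)}$.

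Next, I would lift this to joint convergence on any finite family of pairwise disjoint bounded product sets $B_1,\ldots,B_m$. The key combinatorial identity is that disjointness makes the falling factorial of the total count split multinomially:
\begin{equation*}
\Bigl(\chi^{(n)}(B_1)+\cdots+\chi^{(n)}(B_m)\Bigr)_k
=\sum_{k_1+\cdots+k_m=k}\binom{k}{k_1,\ldots,k_m}\prod_{i=1}^m\bigl(\chi^{(n)}(B_i)\bigr)_{k_i},
\end{equation*}
since a $k$-tuple of distinct points in $\bigsqcup_i B_i$ is classified by how many fall in each $B_i$. Applying hypothesis (\ref{eqn1}) to every sub-union $\bigsqcup_{i\in S}B_i$, $S\subseteq\{1,\ldots,m\}$, and inverting the resulting linear system extracts each individual joint factorial moment. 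Their limits factor as $\prod_i \mu(B_i)^{k_i}$, forcing the limiting vector $(\chi^{(n)}(B_i))_i$ to be independent Poisson variables with parameters $\mu(B_i)$ --- exactly the finite-dimensional marginal of $\chi$.

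Finally, because $\mu$ is $\sigma$-finite and diffuse, the bounded rectangles $A\times I$ form a dissecting semiring generating the Borel $\sigma$-algebra on $\mathbb{R}^+\times\mathbb{C}$, so a standard point-process criterion (Kallenberg's theorem based on void probabilities, or equivalently on joint counts over such a semiring) upgrades the finite-dimensional convergence established above to weak convergence of $\chi^{(n)}$ to $\chi$ in the vague topology on locally finite simple counting measures. The hard part will be the joint extension in the second paragraph: separating individual joint factorial moments $\mathbb{E}[\prod_i(\chi^{(n)}(B_i))_{k_i}]$ from the univariate hypothesis requires applying (\ref{eqn1}) to disjoint unions of rectangles and not only rectangles themselves. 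The cleanest route is to read (\ref{eqn1}) as asserting convergence on every bounded Borel set --- the natural extension, consistent with the usage in \cite{1} --- after which the multinomial identity together with inclusion-exclusion finishes the job.
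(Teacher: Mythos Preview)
Your proposal is correct and follows exactly the approach the paper takes: the paper does not give its own proof of this proposition but simply notes that it is a slight modification of Proposition~2.1 in Ben Arous--Bourgade~\cite{1} and refers the reader there for the argument. Your sketch --- method of moments on a single set, multinomial splitting over disjoint unions, then Kallenberg's criterion --- is precisely the standard route used in~\cite{1}, and your observation that hypothesis~(\ref{eqn1}) must really be read as holding for all bounded Borel sets (not just product rectangles) is both correct and consistent with how the paper actually applies the proposition later.
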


Moreover the following lemma is frequently used. The proofs can be found in \cite{9}.
\begin{lemma}\label{lemma1}
Let $M$ be an $n\times n$ Hermitian positive definite matrix. For any $\omega\subseteq\{1, 2, \ldots, n\}$, let $M_\omega$ (resp. $M_{\overline\omega}$) be the submatrix of $M$ using rows and columns numbered in $\omega$ (resp. $\{1, \ldots, n\}\backslash\omega$). Then
\[
\det(M)\leq\det(M_\omega)\det(M_{\overline{\omega}}).
\]
\end{lemma}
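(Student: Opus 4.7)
The statement is the classical Fischer determinant inequality for Hermitian positive definite matrices. My plan is to reduce to a block Schur-complement computation and then use monotonicity of the determinant on the cone of positive definite matrices.

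First, after a simultaneous permutation of rows and columns (which does not affect the determinant of $M$ nor those of $M_\omega$ and $M_{\overline\omega}$), I may assume without loss of generality that $\omega = \{1,\ldots,k\}$, so that $M$ has the block form
\begin{equation*}
M = \begin{pmatrix} A & B \\ B^* & C \end{pmatrix}, \qquad A = M_\omega,\ C = M_{\overline\omega}.
\end{equation*}
Since $M$ is Hermitian positive definite, so are its principal submatrices $A$ and $C$; in particular $A$ is invertible. The Schur complement identity then gives
\begin{equation*}
\det(M) \;=\; \det(A)\cdot\det\!\bigl(C - B^*A^{-1}B\bigr).
\end{equation*}
This is the main algebraic input, and I would prove it by explicitly block-triangularizing $M$ via the congruence $\bigl(\begin{smallmatrix} I & 0 \\ -B^*A^{-1} & I \end{smallmatrix}\bigr)$, whose determinant is $1$.

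Next, I would compare $C - B^*A^{-1}B$ with $C$. Because $A^{-1}$ is Hermitian positive definite, the matrix $B^*A^{-1}B$ is Hermitian positive semidefinite (for any vector $x$, $x^*B^*A^{-1}Bx = (Bx)^*A^{-1}(Bx) \geq 0$). Consequently
\begin{equation*}
0 \;\prec\; C - B^*A^{-1}B \;\preceq\; C,
\end{equation*}
where positivity of the Schur complement follows from the positive definiteness of $M$ (indeed, $M$ is congruent to $\mathrm{diag}(A, C - B^*A^{-1}B)$, both blocks of which must then be positive definite).

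Finally, I would invoke the monotonicity of $\det$ on Hermitian positive definite matrices: if $0 \prec X \preceq Y$, then $\det(X) \leq \det(Y)$. This is a standard consequence of the fact that $Y^{-1/2}XY^{-1/2}$ is Hermitian with eigenvalues in $(0,1]$, so its determinant is at most $1$, which rearranges to $\det(X)\leq\det(Y)$. Applying this with $X = C - B^*A^{-1}B$ and $Y = C$ yields $\det(C - B^*A^{-1}B)\leq\det(C)$, and combined with the Schur identity above,
\begin{equation*}
\det(M) \;=\; \det(A)\det(C - B^*A^{-1}B) \;\leq\; \det(A)\det(C) \;=\; \det(M_\omega)\det(M_{\overline\omega}).
\end{equation*}

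There is no real obstacle here; the only care needed is in verifying that the Schur complement is indeed positive definite (so that $\det$ is nonnegative and monotonicity applies), and in ensuring that the initial permutation does not alter the three determinants of interest — both of which are straightforward. An iterated application of the two-block case also yields the full Hadamard-Fischer hierarchy, but the two-block statement is exactly what the lemma asks for.
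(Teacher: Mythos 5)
Your proof is correct and complete. The paper itself does not prove this lemma at all --- it simply cites Horn and Johnson's \emph{Matrix Analysis} --- so there is nothing to compare against; your argument is the standard proof of the Fischer inequality via the Schur complement identity $\det(M)=\det(A)\det(C-B^*A^{-1}B)$ together with monotonicity of the determinant on the positive definite cone, and every step (the harmless permutation, positive definiteness of the Schur complement via congruence, and the $Y^{-1/2}XY^{-1/2}$ argument for monotonicity) is justified. This is exactly the sort of self-contained argument the reference would supply.
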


%%%%%%%%%%%%%%%%%%%%%%%%%%%%%%%%%%%%%%%%%%%%

Now we begin to prove Theorem \ref{thm1}, that the point process $\chi^{(n)} = \sum_{i}\delta_{n^{-3/4}|\lambda_{i^*}-\lambda_i|, \lambda_i}$ converges to a Poisson process. By Proposition \ref{prop1}, we just need to verify (\ref{eqn1}). To do this, it remains to prove that for any fixed bounded sets $A\subset\mathbb{R}^+$ and $I\subset\mathbb{C}$,
\begin{equation}\label{eqn2}
\chi^{(n)}(A\times I)\overset{\mathcal{D}}{\longrightarrow}\mathrm{Poisson}(\lambda)
\end{equation}
for $\lambda = \frac{1}{\pi^2}(\int_B|u|^2du)(\int_{I\cap\mathrm{D}(0, 1)}dv)$. 

Let $A_n = n^{-3/4}A = \{n^{-3/4}a: a\in A\}$. $B_n = n^{-3/4}B$ (recall $B = \{u\in\mathbb{C}: |u|\in A, u\succeq0\}$).
Consider the point process 
\[
\xi^{(n)} := \sum_{i=1}^{n-1}\delta_{\lambda_i}
\]
and its thinning $\widetilde{\xi}^{(n)}$ obtained from $\xi^{(n)}$ by only keeping the eigenvalues $\lambda_k$ for which $\xi^{(n)}(\lambda_k+B_n) = 1$. This technique is introduced in \cite{5} by Soshinikov as the \emph{$s$-modified} process. Intuitively, for any bounded sets $A\subset\mathbb{R}^+$ and $I\subset\mathbb{R}^2$, we have
 \begin{eqnarray*}
 \chi^{(n)}(A\times I) &=& \#\{i: \lambda_i\in I, |\lambda_{i^*} - \lambda_i|\in A_n\} \\
\widetilde{\xi}^{(n)}(I) &=& \#\{i: \lambda_i\in I, \text{ there exists only one }\lambda_j \in \lambda_i + B_n\}.
 \end{eqnarray*}
 Then $\{\chi^{(n)}(A\times I)\neq\widetilde{\xi}^{(n)}(I)\}$ would imply that there exist at least two eigenvalues  clustering around the third eigenvalue $\lambda_i$, which is a rare event. Hence we expect $\chi^{(n)}(A\times I) = \widetilde{\xi}^{(n)}(I)$ asymptotically almost surely. This gives us an advantage: instead of analyzing the $2$-D point process $\chi^{(n)}$, we can turn to work with the $1$-D process $\widetilde{\xi}^{(n)}$, which is much simpler.

To formalize this, the following lemma says that $\chi^{(n)}(A\times I)$ and $\widetilde{\xi}^{(n)}(I)$ are asymptotically the same.

%%%%%%%%%%%%%%%%%%%%%%%%%%%%%%%%%%%%%%%%%%%%%

\begin{lemma}\label{lemma2}
For any bounded sets $A\subset\mathbb{R}^+$ and $I\subset\mathbb{R}^2$,
\[
\chi^{(n)}(A\times I)-\widetilde{\xi}^{(n)}(I)\overset{\mathcal{D}}{\longrightarrow}0.
\]
\end{lemma}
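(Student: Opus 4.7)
Since $\chi^{(n)}(A\times I)$ and $\widetilde{\xi}^{(n)}(I)$ are both $\mathbb{Z}_{\geq 0}$-valued, the convergence in distribution to $0$ is equivalent to $\mathbb{P}(\chi^{(n)}(A\times I)\neq\widetilde{\xi}^{(n)}(I))\to 0$. The plan is to show that any discrepancy forces three distinct eigenvalues to lie inside a disk of radius $Rn^{-3/4}$, where $R=\sup A$, and then to bound the probability of such clustering via the three-point correlation $\rho_3$. Define the triple-clustering event
\[
F_n=\bigl\{\exists\text{ distinct }i,j,k:\lambda_i\in I,\ \lambda_j,\lambda_k\in\mathrm{D}(\lambda_i,Rn^{-3/4})\bigr\}.
\]
A short case analysis yields $\{\chi^{(n)}(A\times I)\neq\widetilde{\xi}^{(n)}(I)\}\subseteq F_n$: if $\lambda_i$ is counted by $\chi^{(n)}$ but not by $\widetilde{\xi}^{(n)}$, then besides $\lambda_{i^\ast}$ at least one further eigenvalue lies in $\lambda_i+B_n$; conversely, if $\lambda_i$ is counted by $\widetilde{\xi}^{(n)}$ but not by $\chi^{(n)}$, the unique eigenvalue in $\lambda_i+B_n$ (at distance in $A_n$) and $\lambda_{i^\ast}$ (at distance $\leq Rn^{-3/4}$ but outside $A_n$) supply two distinct eigenvalues near $\lambda_i$. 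Either way, three eigenvalues cluster in $\mathrm{D}(\lambda_i,Rn^{-3/4})$.

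By Markov's inequality and the factorial-moment identity (\ref{v3.1}),
\[
\mathbb{P}(F_n)\leq\int_I d\lambda_1\iint_{|u|,|v|\leq Rn^{-3/4}}\rho_3(\lambda_1,\lambda_1+u,\lambda_1+v)\,du\,dv.
\]
For $\lambda_1$ in a compact subset of the open unit disk, Lemma \ref{good} gives $K_n(z\bar w)=e^{nz\bar w}(1+O(e^{-cn}))$, so $\widetilde K(z,w):=K_n(z\bar w)e^{-n(|z|^2+|w|^2)/2}$ agrees, up to exponential error, with the bulk Ginibre kernel $e^{-n|z-w|^2/2+in\Im(z\bar w)}$. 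A diagonal gauge transformation on the $3\times 3$ matrix $[\widetilde K(z_i,z_j)]$ followed by Taylor expansion in $n|z_i-z_j|^2=O(n^{-1/2})$ exposes the Vandermonde-type cancellation
\[
\det\bigl[\widetilde K(z_i,z_j)\bigr]_{i,j=1}^{3}=O\!\Bigl(n^{3}\!\!\prod_{1\leq i<j\leq 3}\!\!|z_i-z_j|^{2}\Bigr),
\]
so $\rho_3=(n/\pi)^3\det[\widetilde K]=O(n^{6}\prod_{i<j}|z_i-z_j|^{2})$. Using $|z_i-z_j|\leq 2Rn^{-3/4}$ and that each of the two integration disks has area $O(n^{-3/2})$, the inner double integral is $O(n^{-15/2})$, whence the bulk contribution to $\mathbb{P}(F_n)$ is $O(|I|\cdot n^{6-15/2})=O(n^{-3/2})\to 0$.

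It remains to control $\lambda_1$ near or outside the unit circle. For $|\lambda_1|>1+\eta$, Lemma \ref{good}(3) forces $\widetilde K(z,z)$ to decay exponentially, and Hadamard's inequality (Lemma \ref{lemma1}) gives $\rho_3\leq\prod_i[(n/\pi)\widetilde K(z_i,z_i)]=\prod_i\rho_1(z_i)$, which is also exponentially small; the narrow annulus $\{|\lambda_1|^2\in(1-\eta_n,1+\eta_n)\}$ for a slowly vanishing $\eta_n$ contributes only $O(\eta_n\cdot n^3\cdot n^{-3})=O(\eta_n)$ via the same Hadamard bound together with $\rho_1\leq Cn$. The main obstacle is extracting the Vandermonde factor in the bulk: Hadamard alone yields only $\rho_3\leq Cn^3$, which exactly balances the $O(n^{-3})$ area of the two small disks and produces a useless $O(1)$ estimate. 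Gaining the extra factor $n^{-3/2}$ is genuinely determinantal and relies on the sharp short-distance asymptotics of $\widetilde K$ afforded by Lemma \ref{good}, together with the uniform phase control from Remark \ref{remark1}.
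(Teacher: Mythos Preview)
Your strategy matches the paper's almost exactly: reduce $\{\chi^{(n)}(A\times I)\neq\widetilde\xi^{(n)}(I)\}$ to a three-point clustering event, bound its probability by an integral of $\rho_3$, and split the $\lambda$-integration into bulk, boundary annulus, and exterior. The case analysis you give for the inclusion into $F_n$ is the same as the paper's (the paper restricts to the half-disk $D^+(0,c_n)$, you use the full disk, which is harmless).

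Two points of comparison are worth noting. First, in the bulk you invoke the full Vandermonde cancellation $\det[\widetilde K(z_i,z_j)]=O\bigl(n^{3}\prod_{i<j}|z_i-z_j|^{2}\bigr)$, hence $\rho_3=O(n^{3/2})$. This is correct (it follows cleanly from the Gram/reproducing-kernel structure $e^{w_i\bar w_j}=\sum_k w_i^k\bar w_j^k/k!$ after gauge, via Cauchy--Binet), but it is sharper than needed and you only sketch it. The paper takes a more pedestrian route: it expands the $3\times3$ determinant explicitly in the bulk and reads off $\rho_3=O(n^{5/2})$, which against the $O(n^{-3})$ area still gives $O(n^{-1/2})\to0$. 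So you do not actually need the extra factor $n^{-1}$ you work for.

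Second, your three regions do not quite tile $I$. You treat ``$\lambda_1$ in a compact subset of the open unit disk'' with an $O(e^{-cn})$ kernel error, then ``$|\lambda_1|>1+\eta$'' with fixed $\eta$, then the ``narrow annulus $\{|\lambda_1|^2\in(1-\eta_n,1+\eta_n)\}$'' with $\eta_n\to0$. The shell between a fixed compact set and the shrinking annulus is not covered. The paper avoids this by making all three cutoffs $n$-dependent from the start, taking $I_1=\{|\lambda|\le 1-n^{-0.02}\}$, $I_2=\{|\lambda|\ge 1+n^{-0.02}\}$, $I_3$ the remaining ring; on $I_1$ the residual $R_n$ is still $o(n^{-N})$ for every $N$ (Lemma~\ref{good}(2) with $|z|\le 1-n^{-0.01}$), so the bulk computation survives uniformly. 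Replacing your ``compact subset'' by $\{|\lambda_1|\le 1-\eta_n\}$ with the same $\eta_n$ closes this gap, but then the $O(e^{-cn})$ you quote degrades to $O(e^{-cn\eta_n^2})$ and you must check it is still negligible against the $O(n^{-3/2})$ determinant --- exactly the care the paper takes.
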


%%%%%%%%%%%%%%%%%
%
%  Proof 
% 
%%%%%%%%%%%%%%%%%

\begin{proof} Let $c$ be such that $A\subset (0, c)$ and define $c_n = cn^{-3/4}$. 

First we prove that if 
\[
\mathbbm{1}_{\{\lambda_{i^*}-\lambda_i\in B_n\}}\neq\mathbbm{1}_{\{\xi^{(n)}(\lambda_i+B_n)=1\}},
\]
then $\xi^{(n)}(\lambda_i+\mathrm{D}^+(0, c_n))\geq2$, that is, at least 3 eigenvalues are clustered together. Here $\mathrm{D}^+(0, c_n) = \{z: |z|\leq c_n, z\succeq0\}$ is the half disk in the complex plane.

Indeed, if $\lambda_{i^*} - \lambda_i\in B_n$(recall the $\lambda_{i^*}$ is the closest eigenvalue to $\lambda_i$) and $\xi^{(n)}(\lambda_i+B_n)\neq1$, then $\xi^{(n)}(\lambda_i+B_n)>1$. Thus $\xi^{(n)}(\lambda_i+\mathrm{D}^+(0, c_n))\geq\xi^{(n)}(\lambda_i+B_n)\geq2$. Conversely, if $\xi^{(n)}(\lambda_i+B_n)=1$ then there exist $j$ such that $\lambda_j-\lambda_i\in B_n\subset\mathrm{D}^+(0, c_n)$. Thus $\lambda_{i^*}\in\lambda_i + \mathrm{D}^+(0, c_n)$. If $\lambda_{i^*}\notin B_n$ then $\lambda_{i^*}\neq\lambda_j$. This implies that $\xi^{(n)}(\lambda_i+\mathrm{D}^+(0, c_n))\geq2$. In summary, no matter what case happens, we always have $\xi^{(n)}(\lambda_i+\mathrm{D}^+(0, c_n))\geq2$. Thus the statement is correct. 

From the statement we have
\[
|\chi^{(n)}(A\times I)-\widetilde{\xi}^{(n)}(I)|\leq\sum_{i=1}^{n-1}\mathbbm{1}_{\{\xi^{(n)}(\mathrm{D}^+(\lambda_i, c_n))\geq2\}}\leq\Xi^{(3)}(\mathcal{B}).
\]
where 
\[
\Xi^{(k)} = \sum_{\lambda_{i_1}, \ldots, \lambda_{i_k} \text{ are all distinct }}\delta_{(\lambda_{i_1}, \ldots, \lambda_{i_k})}
\]
and
\[
\mathcal{B} = \{(\lambda, x_1, x_2): \lambda\in I \text{ and } (x_1, x_2)\in\mathrm{D}^+(0, c_n)^2\}.
\]
Since $L^1$ convergence implies weak convergence, we just need to prove $\mathbb{E}(\Xi^{(3)}(\mathcal{B}))\to 0$ as $n\to \infty$. We have 
\begin{equation}\label{v3.3}
\mathbb{E}(\Xi^{(3)}(\mathcal{B})) = \int_Id\lambda\int_{\mathrm{D}^+(\lambda, c_n)^2}\rho_3(\lambda, x_1, x_2)dx_1dx_2
\end{equation}
where $\rho_3$ is the $3$-point correlation function. That is,
\begin{multline}\label{v3.2}
\rho_3(\lambda, x_1, x_2) = n^3\pi^{-3}\exp(-n(|\lambda|^2+|x_1|^2+|x_2|^2))\times \\\det
\left(
\begin{array}{ccc}
K_n(|\lambda|^2) & K_n(\lambda x_1^*) & K_n(\lambda x_2^*)  \\
K_n(x_1\lambda^*) & K_n(|x_1|^2) & K_n(x_1x_2^*) \\
K_n(x_2\lambda^*) & K_n(x_2x_1^*) & K_n(|x_2|^2)
\end{array}
\right).
\end{multline}

In order to prove that (\ref{v3.3}) approaches to zero, we need to decompose it into three parts because of the different asymptotics of $K_n(z)$ in different regions. More precisely, we define 
\begin{eqnarray*}
I_1 & = & \{\lambda\in I : |\lambda|\leq 1-n^{-0.02}\}, \\
I_2 & = & \{\lambda\in I : |\lambda|\geq 1+n^{-0.02}\}, \\
I_3 & = & \{\lambda\in I : 1-n^{-0.02}<|\lambda|< 1+n^{-0.02}\}
\end{eqnarray*}
as the inner part, outer part and the middle part of $I$. 
Below we shall prove that the integral is small on all the three regions. Our strategy is as follows.
\begin{enumerate}
\item
For the inner part, $K_n(z)\sim e^{nz}$. We expect that the terms in the determinant in (\ref{v3.2}) would cancel out, leaving us a small integrand.
\item
For the outer part, $K_n(z) \sim 0$. Hence each term in the determinant is very small. 
\item
For the boundary part, we can only obtain a poor upper bound for $\rho_3(\lambda_1, \lambda_2, \lambda_3)$. However this time the integral region of the boundary part is small, which yields a small result. 
\end{enumerate}

Below is our rigorous proof.

%%%%%%%%%%%%%%%%%%%%%%%%%%%%%%%%%%%%%%%
%
%       Case 1: Inner Part
%
%%%%%%%%%%%%%%%%%%%%%%%%%%%%%%%%%%%%%%%
\textsc{ (1) Inner Part.}
Let's first start with $I_1$.
Replace $K_n(z)$ with $e^{nz}(1-R_n(z))$ in (\ref{v3.2}), we get
\begin{multline}\label{eq1}
 \rho_3(\lambda, x_1, x_2) = Q_n(\lambda, x_1, x_2)
 + \\  n^3\pi^{-3}\exp(-n(|\lambda|^2+|x_1|^2+|x_2|^2))
\left|
\begin{array}{ccc}
\exp(n|\lambda|^2) & \exp(n\lambda x_1^*) & \exp(n\lambda x_2^*) \\
\exp(nx_1\lambda^*) & \exp(n|x_1|^2) & \exp(nx_1x_2^*) \\
\exp(nx_2\lambda^*) & \exp(nx_2x_1^*) & \exp(n|x_2|^2) 
\end{array}
\right|.
\end{multline}
Here $Q_n(\lambda, x_1, x_2)$ is the residual, being the summation of several terms. One typical term is
\begin{equation}\label{eq11}
n^3\pi^{-3}\exp\Bigl(-n(|\lambda|^2 + |x_1|^2 + |x_2|^2)\Bigl)\cdot \exp(n\lambda x_1^* + nx_1x_2^* + nx_2\lambda^*)R_n(\lambda x_1^*).
\end{equation}
Taking the absolute values, we get $|(\ref{eq11})|\leq n^3\pi^{-3}|R_n(\lambda x_1^*)|$. Before providing a bound on the main part of (\ref{eq1}), we first prove that the residual term (\ref{eq11}) is small.

First we consider the case where $|\lambda| < 0.12$ is very small. In this case for $n$ sufficiently large $|\lambda x_1^*| < 0.02$. Thus by part (1) of Lemma \ref{good}, $R_n(\lambda x_1^*)$ uniformly decays exponentially, hence the impact of (\ref{eq11}) is negligible as $n$ tends to infinity.

Next we consider the case $|\lambda| \geq0.12$. Since $|\lambda|<1-n^{-0.02}$ and $|x_1-\lambda|<n^{-3/4}$, we obtain that for $n$ sufficiently large, $0.01 < |x_1\lambda^*|<1-n^{-0.01}$. Thus by part (2) of Lemma \ref{good}, the upper bound for the absolute value is
\begin{eqnarray*}
n^3\pi^{-3}|R_n(\lambda x_1^*)| & \leq & \mathcal{O}(1)\cdot n^{3}\Bigl(|x_1\lambda^*|e^{1-|x_1\lambda^*|}\Bigl)^n \\
& = & \mathcal{O}(1)\cdot\exp\Bigl(n\log|x_1\lambda^*|+n-n|x_1\lambda^*|+3\log n\Bigl) \\
& \leq & \mathcal{O}(1)\cdot\exp\Bigl(n\log(1-n^{-0.01})+n^{0.99}+3\log n\Bigl) \\
& = & \mathcal{O}(1)\cdot\exp\Bigl(-\frac{1}{2}n^{0.98} + o(n^{0.98})\Bigl) \to 0.
%&\leq & n^3\pi^{-3}\sqrt{\frac{n}{2\pi}}\Bigl(|1-n^{-0.01}|e^{n^{-0.01}+cn^{-3/4}}\Bigl)^n \\
%& = & \pi^{-3}\frac{1}{\sqrt{2\pi}}\exp\biggl\{\frac{7}{2}\log n + n\log(1-n^{-0.01})+n^{0.99} + cn^{1/4}\biggl\} \\
%& = & \pi^{-3}\frac{1}{\sqrt{2\pi}}\exp\biggl\{\frac{7}{2}\log n -n^{0.99} - \frac{1}{2}n^{0.98} + o(n^{0.98})+n^{0.99} + cn^{1/4}\biggl\}  \\
%& = & \pi^{-3}\frac{1}{\sqrt{2\pi}}\exp\biggl\{-\frac{1}{2}n^{0.98} + o(n^{0.98})\biggl\} \to 0.
\end{eqnarray*}
The other terms converge to zero uniformly as well. So we get that $Q_n(\lambda, x_1, x_2)$ converge to zero uniformly --- their impact is negligible.

Next we turn to analyze the main part of (\ref{eq1}). Expanding the determinant, we have
\begin{eqnarray*}
\rho_3(\lambda, x_1, x_2)& =& \pi^{-3}n^3(3-e^{-n|x_2-\lambda|^2}-e^{-n|x_1-\lambda|^2}-e^{-n|x_2-x_1|^2}) \\ 
& &+\pi^{-3}n^3(e^{n(x_1\lambda^*+x_2x_1^*+\lambda x_2^*-|\lambda|^2-|x_1|^2-|x_2^2|)}-1)\\
& &+\pi^{-3}n^3(e^{n(x_1x_2^*+x_2\lambda^*+\lambda x_1^*-|\lambda|^2-|x_1|^2-|x_2^2|)}-1) + o(1).
\end{eqnarray*}
In the expression above, the first term is $\mathcal{O}(n^3)\times \mathcal{O}(n^{-1/2}) = \mathcal{O}(n^{5/2})$. To analyze the second term, let $x_1 = \lambda + n^{-3/4}u_1, x_2 = \lambda + n^{-3/4}u_2.$ Then, after some calculation, the second term is
\[
\pi^{-3}n^3\biggl[\exp\Bigl(-n^{-1/2}(|u_1|^2 + |u_2|^2 - u_2u_1^*)\Bigl)-1\biggl] = \mathcal{O}(n^{5/2}).
\]
The same is true for the third term. Thus $\rho_3(\lambda, x_1, x_2) = \mathcal{O}(n^{5/2})$. But the integration domain $I_1\times\mathrm{D}^+(\lambda_i, c_n)^2$ is of order $\mathcal{O}(n^{-3})$. Hence we successfully proved that uniformly
\[
\lim_{n\to\infty}\int_{I_1}d\lambda\int_{\mathrm{D}^+(\lambda, c_n)^2}\rho_3(\lambda, x_1, x_2)dx_1dx_2 = 0.
\]

%%%%%%%%%%%%%%%%%%%%%%%%%%%%%%%%%%%%%%%
%
%       Case 2: Outer Part
%
%%%%%%%%%%%%%%%%%%%%%%%%%%%%%%%%%%%%%%%
\textsc{(2) Outer Part.}
Next we consider the integration on $I_2$. The integration domain is $\mathcal{O}(n^{-3})$, but we prove that the integrand $\rho_3(\lambda, x_1, x_2)$ is $o(1)$. Indeed, expanding the determinant of $\rho_3(\lambda, x_1, x_2)$ in (\ref{v3.2}), we get a summation of several terms. One typical term is
\begin{equation}\label{eq2}
n^3\pi^{-3}\exp(-n(|\lambda|^2+|x_1|^2+|x_2|^2))\cdot K_n(\lambda x_1^*)K_n(x_1x_2^*)K_n(x_2\lambda^*).
\end{equation}
For $|\lambda|>1+n^{-0.02}$ and $|x_i-\lambda|<n^{-3/4}$, we have that for $n$ sufficiently large, $|x_1x_2| > 1+n^{-0.01}$ and $|x_i\lambda^*|>1+n^{-0.01}$ for $i = 1, 2.$ Hence by part (3) of Lemma \ref{good} the magnitude of (\ref{eq2}) has upper bound
\begin{eqnarray*}
|(\ref{eq2})| & \leq & \mathcal{O}(n^3)\Bigl(|\lambda x_1^*|e^{-(|\lambda x_1^*|-1)}\Bigl)^n\Bigl(|x_1x_2^*|e^{-(|x_1x_2^*|-1)}\Bigl)^n\Bigl(|x_2\lambda^*|e^{-(|x_2\lambda^*|-1)}\Bigl)^n\\
&\leq & \mathcal{O}(1)\cdot \exp(3n\log(1+n^{-0.01}) - 3n^{0.99} + 3\log n) \\
& = & \exp(-3n^{0.98}/2 + o(n^{0.98}))\to 0.
\end{eqnarray*}
Similarly, the other terms in the expansion of the determinant  also uniformly converge to zero. Hence $\rho(\lambda, x_1, x_2) = o(1)$ uniformly. Thus we conclude that the integral on $I_2$ also converge to zero.

%%%%%%%%%%%%%%%%%%%%%%%%%%%%%%%%%%%%%%%
%
%       Case 3: Middle Part
%
%%%%%%%%%%%%%%%%%%%%%%%%%%%%%%%%%%%%%%%
\textsc{(3) Middle Part.}
Finally, we prove the integral on $I_3$ is also of $o(1)$. Again we expand the determinant and prove that each term is small. Since for any $z\in\mathbb{C}$, 
\begin{equation}\label{v3.4}
|K_n(z)|\leq \sum_{\ell = 0}^{n-1}\frac{n^\ell|z|^\ell}{\ell!} \leq \sum_{\ell = 0}^{\infty}\frac{n^\ell|z|^\ell}{\ell!}  = e^{n|z|},
\end{equation}
Expanding the determint in (\ref{v3.2}) again and using the estimate (\ref{v3.4}), we have that the magnitude of one typical term in the expansion has upper bound
\begin{eqnarray*}
&& n^3\pi^{-3}\exp(-n(|\lambda|^2 + |x_1|^2 + |x_2|^2))|K_n(\lambda x_1^*)K_n(x_1x_2^*)K_n(x_2\lambda^*)| \\
&\leq & n^3\pi^{-3}\exp\biggl\{-\frac{n(|\lambda|-|x_1|)^2}{2}-\frac{n(|\lambda|-|x_2|)^2}{2}-\frac{n(|x_1|-|x_2|)^2}{2}\biggl\} = \mathcal{O}(n^3).
\end{eqnarray*}
But the integration domain is of order $\mathcal{O}(n^{-3})\times \mathcal{O}(n^{-0.02}) = \mathcal{O}(n^{-3.02})$. Hence the total integration over $I_3$ decay to zero at the speed of $\mathcal{O}(n^{-0.02})$. 

From all the above, we proved that the integral of $I = I_1\cup I_2\cup I_3$ converges to zero. Hence $\mathbb{E}(\Xi^{(3)}(\mathcal{B}))\to 0.$ 
\end{proof}

%%%%%%%%%%%%%%%%%%%%%%%%%%%%%%%%%%%%%%%%%%%%%

Note that previously our ultimate goal is to prove $\chi^{(n)}(A\times I)\to\mathrm{Poisson}(\lambda)$ for $\lambda = \frac{|I\cap\mathrm{D}(0, 1)|}{\pi}\int_Ar^3dr$. Now Lemma \ref{lemma2} tells us $\chi^{(n)}(A\times I)$ and $\widetilde{\xi}^{(n)}$ has asymptotically the same distribution, it remains to prove $\widetilde{\xi}^{(n)}(I)\to\mathrm{Poisson}(\lambda)$. Or, we just need to prove that 
\begin{equation}\label{eqn3}
\mathbb{E}\biggl[\frac{\widetilde{\xi}^{(n)}(I)!}{(\widetilde{\xi}^{(n)}(I)-k)!}\biggl] \overset{n\to\infty}{\longrightarrow} \biggl(\frac{1}{\pi^2}\int_B|u|^2du\biggl)^k\biggl(\int_{I\cap\mathrm{D}(0, 1)}dv\biggl)^k, \forall k\geq1.
\end{equation}
This is because if (\ref{eqn3}) holds, then all the moments of $\widetilde{\xi}^{(n)}(I)$ converges to the moment of $\mathrm{Poisson}(\lambda)$. This implies $\widetilde{\xi}^{(n)}(I)\to\chi(A\times I)$ weakly, or $\chi^{(n)}(A\times I)\to\chi(A\times I)$ weakly. 

Denote $\widetilde{\rho}_k(\lambda_1, \ldots, \lambda_k)$ as the $k$-point correlation function of $\widetilde{\xi}^{(n)}$. Recall the point correlation function has the property (\ref{v3.1}), the left hand side of (\ref{eqn3}) can be expressed as
\begin{equation}\label{eqn4}
\mathbb{E}\biggl[\frac{\widetilde{\xi}^{(n)}(I)!}{(\widetilde{\xi}^{(n)}(I)-k)!}\biggl] = \int_{I^k}\widetilde{\rho}_k(\lambda_1, \ldots, \lambda_k)d\lambda_1\ldots d\lambda_k.
\end{equation}
Thus now our ultimate goal is to prove 
\begin{equation}\label{v3.5}
\int_{I^k}\widetilde{\rho}_k(\lambda_1, \ldots, \lambda_k)d\lambda_1\ldots d\lambda_k \overset{n\to\infty}{\longrightarrow} \biggl(\frac{1}{\pi^2}\int_B|u|^2du\biggl)^k\biggl(\int_{I\cap\mathrm{D}(0, 1)}dv\biggl)^k, \forall k\geq1.
\end{equation}

To establish (\ref{v3.5}), it remains to prove the following three statements.
\begin{enumerate}
\item
If all the $\lambda_k$'s are distinct and have magnitude not equal to 1, then 
$
\widetilde{\rho}(\lambda_1, \ldots, \lambda_k) \to
(\frac{1}{\pi^2}\int_B|u|^2du)^k\cdot\prod_{i=1}^k\mathbbm{1}_{|\lambda_i|<1}.
$
\item
Define 
\[
\Omega = \{(\lambda_1, \ldots, \lambda_k)\in I^k: (\lambda_i+B_n)\cap(\lambda_j+B_n) = \emptyset, 1\leq i, j\leq k\}.
\] 
Then $\widetilde{\rho}(\lambda_1, \ldots, \lambda_k)$ is uniformly bounded in $\Omega$ (Lemma \ref{lemma4}).
\item
Define $\overline{\Omega} = I^k\backslash\Omega$, then the contribution of the integral of $\widetilde{\rho}_k(\lambda_1, \ldots, \lambda_k)$ in $\overline{\Omega}$ is negligible since the volume of $\overline{\Omega}$ is sufficiently small (Lemma \ref{lemma5}).
\end{enumerate}

As a little discussion, the set $\Omega$ represents the majority of the integration region. Combining the point-wise convergence result in statement 1 and the boundedness result in statement 2, we can perform the dominated convergence theorem in the main region $\Omega$. Furthermore statement 3 ensures that the contribution of the minor region $\overline{\Omega}$ does not play an important role, hence we can obtain the result (\ref{v3.5}).

We prove the three statements one by one in the following.

%%%%%%%%%%%%%%%%%%%%%%%%%%%%%%%%%%%%%%%%%%%%%

\begin{lemma}\label{lemma3}
(Point-wise convergence) Let $\lambda_1, \ldots, \lambda_k$ be distinct complex numbers and $|\lambda_i|\neq1$ for $i = 1, \ldots, k$. Then as $n\to\infty$
\[
\widetilde{\rho}_k(\lambda_1, \ldots, \lambda_k) \longrightarrow
\biggl(\displaystyle\frac{1}{\pi^2}\int_B|u|^2du\biggl)^k\cdot\prod_{i=1}^k\mathbbm{1}_{|\lambda_i|<1}.
\]
\end{lemma}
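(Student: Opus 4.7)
The plan is based on the inclusion-exclusion formula for the thinned correlation function. Using $\mathbbm{1}_{\{N=1\}}=\sum_{m\ge 1}\frac{(-1)^{m-1}}{(m-1)!}N(N-1)\cdots(N-m+1)$ for each factor $\mathbbm{1}_{\{\xi^{(n)}(\lambda_i+B_n)=1\}}$ and the Palm calculus for determinantal processes, one obtains
\[
\widetilde{\rho}_k(\lambda_1,\ldots,\lambda_k)=\sum_{m_1,\ldots,m_k\ge 1}\prod_{i=1}^k\frac{(-1)^{m_i-1}}{(m_i-1)!}\int_{B_n^{M}}\rho_{k+M}(\lambda_1,\ldots,\lambda_k,\lambda_1+v_{1,1},\ldots,\lambda_k+v_{k,m_k})\,dv,
\]
where $M=\sum_i m_i$. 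I will isolate the $m_i=1$ term and show it converges to the target, while all other terms vanish.

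For the leading $m_i=1$ term, assume first that the $\lambda_i$ are distinct with all $|\lambda_i|<1$. Writing $K_n^{(G)}(z,w):=\frac{n}{\pi}e^{-n(|z|^2+|w|^2)/2}K_n(zw^*)$, I combine Lemma \ref{good} with a direct Stirling estimate of the tail $T_n(\xi):=\sum_{\ell\ge n}(n\xi)^\ell/\ell!$ (the relevant exponent $1-(|z|^2+|w|^2)/2+\log(|z||w|)$ is strictly negative on $(0,1)^2$) to obtain
\[
K_n^{(G)}(z,w)=\frac{n}{\pi}e^{-n|z-w|^2/2+in\,\Im(zw^*)}+O(e^{-cn}),\qquad |z|,|w|<1.
\]
Consequently entries between distinct cluster centres (macroscopic separation) are $O(e^{-cn})$, while within-cluster entries (separation $O(n^{-3/4})$) tend to $n/\pi$. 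Expanding the $2k\times 2k$ determinant defining $\rho_{2k}$ as a sum over permutations, only block-preserving ones survive, and within each $2\times 2$ block the phase factor $e^{in\,\Im(zw^*)}$ pulls out as row/column rotations that cancel; thus
\[
\rho_{2k}(\lambda_1,\ldots,\lambda_k,\lambda_1+v_1,\ldots,\lambda_k+v_k)\sim\prod_{i=1}^k\frac{n^2}{\pi^2}\bigl(1-e^{-n|v_i|^2}\bigr).
\]
Substituting $v_i=n^{-3/4}u_i$ with $u_i\in B$ and using $n|v_i|^2=n^{-1/2}|u_i|^2\to 0$, each cluster contributes $\int_{B_n}\frac{n^2}{\pi^2}(1-e^{-n|v|^2})dv\to \frac{1}{\pi^2}\int_B|u|^2\,du$, and multiplying the $k$ cluster contributions yields the claimed limit.

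For $m_i\ge 2$ in some cluster, the Vandermonde structure of the Ginibre joint density forces
\[
\rho_{1+m_i}(\lambda_i,\lambda_i+v_{i,1},\ldots,\lambda_i+v_{i,m_i})\lesssim\Bigl(\frac{n}{\pi}\Bigr)^{1+m_i}n^{m_i(m_i+1)/2}\prod_{0\le a<b\le m_i}|v_{i,a}-v_{i,b}|^2,
\]
with $v_{i,0}:=0$. After the substitution $v_{i,j}=n^{-3/4}u_{i,j}$ each cluster contributes an integral of order $n^{1-m_i(m_i+3)/4}$, which is $O(n^{-3/2})$ for $m_i\ge 2$; summed over the finitely many contributing indices these corrections are $o(1)$. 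Finally, if some $|\lambda_i|>1$, Hadamard--Fischer (Lemma \ref{lemma1}) applied to the PSD matrix $(K_n^{(G)}(\mu_a,\mu_b))$ gives $\rho_{k+M}\le\prod_j\rho_1(\mu_j)$, and Lemma \ref{good} part 3 yields $\rho_1(\lambda_i)\lesssim n(|\lambda_i|^2 e^{1-|\lambda_i|^2})^n\to 0$ exponentially (since $xe^{1-x}<1$ for $x>1$), so $\widetilde{\rho}_k\to 0$, matching the indicator $\prod_i\mathbbm{1}_{|\lambda_i|<1}$.

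The main obstacle is step 1: Lemma \ref{good} as stated requires its argument to lie within $O(n^{-3/4})$ of the real axis, which is automatic within a single cluster (since $\lambda_ix_j^*=|\lambda_i|^2+O(n^{-3/4})$ when $x_j=\lambda_i+O(n^{-3/4})$) but fails for the between-cluster quantity $\lambda_i\lambda_j^*$ with distinct $\lambda_i,\lambda_j$. Supplying a separate Stirling/Szeg\H{o}-type tail bound on $T_n(zw^*)$ valid for generic $|z|,|w|<1$ is what produces the exponential off-diagonal decay and justifies factorising the full $(k+M)\times(k+M)$ determinant into the $k$ cluster blocks up to $o(1)$ error.
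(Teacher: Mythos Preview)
Your outline is essentially the paper's own argument: express $\widetilde\rho_k$ by inclusion--exclusion, isolate the leading ($m_i=1$) term, use the block structure of the $2k\times 2k$ determinant to factor it into $k$ two-point contributions, and kill the remaining terms and the $|\lambda_i|>1$ case by Hadamard--Fischer. Two points of divergence are worth noting.

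First, for the higher-order terms you invoke a Vandermonde-type bound $\rho_{1+m_i}\lesssim (n/\pi)^{1+m_i}n^{m_i(m_i+1)/2}\prod_{a<b}|v_{i,a}-v_{i,b}|^2$. This is plausible (it encodes the local repulsion at scale $n^{-1/2}$) but you do not prove it, and it is more than you need. The paper avoids this entirely: it uses Hadamard--Fischer once more to write $\rho_{2k+m}\le \rho_{2k}\prod_{j=1}^m\rho_1(y_j)$, observes $\rho_1\le n/\pi$ and $|b_n|=O(n^{-3/2})$, so each extra point contributes a factor $O(n^{-1/2})$ and the whole tail $\sum_{m\ge 1}\frac{1}{m!}\bigl(\int_{b_n}\rho_1\bigr)^m$ is $o(1)$. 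This is both shorter and self-contained.

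Second, your ``main obstacle'' paragraph is well taken and in fact sharper than the paper. The paper writes $e^{-n(|\lambda_i|^2+|\lambda_j|^2)/2}K_n(\lambda_i\lambda_j^*)\sim e^{-n|\lambda_i-\lambda_j|^2/2}$ as if Lemma~\ref{good} applied, but as you note that lemma only controls $K_n$ at arguments within $O(n^{-3/4})$ of the real axis, which $\lambda_i\lambda_j^*$ need not be. Your remedy---bounding the tail $T_n(\xi)=\sum_{\ell\ge n}(n\xi)^\ell/\ell!$ directly via Stirling, with exponent $1-(|z|^2+|w|^2)/2+\log(|z||w|)<0$ on $(0,1)^2$---is exactly the missing estimate, and combined with $|e^{-n(|z|^2+|w|^2)/2+nzw^*}|=e^{-n|z-w|^2/2}$ it yields the required exponential off-diagonal decay. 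So on this point your write-up is actually more careful than the paper's.
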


%%%%%%%%%%%%%%%%%
%
%  Proof 
% 
%%%%%%%%%%%%%%%%%

\begin{proof} To begin with we consider the first case where $|\lambda_i|<1$ for all $i$. Since all the $\lambda_i$'s are distinct, for $n$ large enough, $\lambda_i+B_n$ are disjoint sets for different $i$'s. Moreover there exists some $\epsilon_0>0$ such that for $n$ sufficiently large, $\lambda_i+B_n\subset \mathrm{D}(0, 1-\epsilon_0)$.

By an inclusion-exclusion argument, for sufficiently large $n$ we can represent $\widetilde{\rho}_k$ in terms of $\rho_k$ as follows. For a general result please see \cite{5}.
\begin{multline}\label{eqn5}
\widetilde{\rho}_k(\lambda_1, \ldots, \lambda_k) = \sum_{m=0}^\infty\frac{(-1)^m}{m!}\int_{\lambda_1+B_n}dx_1\ldots\int_{\lambda_k+B_n}dx_k \\
\int_{((\lambda_1+B_n)\sqcup\ldots\sqcup(\lambda_k+B_n))^m}\rho_{2k+m}(\lambda_1, x_1, \ldots, \lambda_k, x_k, y_1, \ldots, y_m)dy_1\ldots dy_m.
\end{multline}

Since $\rho_{2k+m}=0$ for $2k+m>n$, (\ref{eqn5}) is a finite sum thus there is no convergence issue. First of all we consider the $m=0$ case. Now the expression for $\rho_{2k}$ is
\begin{multline}
\rho_{2k}(\lambda_1, x_1, \ldots, \lambda_k, x_k) \\ = n^{2k}\pi^{-2k}\exp\biggl(-n\sum_{j=1}^k(|\lambda_j|^2+|x_j|^2)\biggl)\det_{1\leq i, j\leq k}
\left(
\begin{array}{cc}
K_n(\lambda_i\lambda_j^*) & K_n(\lambda_ix_j^*) \\
K_n(x_i\lambda_j^*) & K_n(x_ix_j^*)
\end{array}
\right)\label{eqn6}\qquad \\
 =  n^{2k}\pi^{-2k}\det_{1\leq i, j\leq k}
\left(
\begin{array}{cc}
e^{-n(|\lambda_i|^2+|\lambda_j|^2)/2}K_n(\lambda_i\lambda_j^*) & e^{-n(|\lambda_i|^2+|x_j|^2)/2}K_n(\lambda_ix_j^*) \\
e^{-n(|x_i|^2+|\lambda_j|^2)/2}K_n(x_i\lambda_j^*) & e^{-n(|x_i|^2+|x_j|^2)/2}K_n(x_ix_j^*)
\end{array}
\right)
\end{multline}
Here the determinant is $(2k)\times(2k)$ with sub $2\times2$ blocks described above. We prove next that only the diagonal $2\times 2$ blocks can have a non-negligible contribution. Indeed consider the $(i, j)$-th $2\times2$ block. If $i\neq j$, then it has the estimation
\[
\left|
\begin{array}{cc}
e^{-n(|\lambda_i|^2+|\lambda_j|^2)/2}K_n(\lambda_i\lambda_j^*) & e^{-n(|\lambda_i|^2+|x_j|^2)/2}K_n(\lambda_ix_j^*) \\
e^{-n(|x_i|^2+|\lambda_j|^2)/2}K_n(x_i\lambda_j^*) & e^{-n(|x_i|^2+|x_j|^2)/2}K_n(x_ix_j^*)
\end{array}
\right|\sim
\left|
\begin{array}{cc}
o(1) & o(1) \\ o(1) & o(1)
\end{array}
\right|.
\]
This is because, for example consider the top-left entry, 
\[
e^{-n(|\lambda_i|^2+|\lambda_j|^2)/2}K_n(\lambda_i\lambda_j^*) \sim \exp(-n(|\lambda_i|^2 + |\lambda_j|^2 - 2\lambda_i^*\lambda_j)/2) = o(1)
\]
as $\lambda_i\neq\lambda_j$. Moreover the $o(1)$ notation decays to zero exponentially fast. The same is true for other terms. Hence the determinant of the small block above tends to zero.

However, if $i=j$, then
\[
\left|
\begin{array}{cc}
e^{-n|\lambda_i|^2}K_n(|\lambda_i|^2) & e^{-n(|\lambda_i|^2+|x_i|^2)/2}K_n(\lambda_ix_i^*) \\
e^{-n(|x_i|^2+|\lambda_i|^2)/2}K_n(x_i\lambda_i^*) & e^{-n|x_i|^2}K_n(|x_i|^2)
\end{array}
\right|\sim
1- e^{-n|\lambda_i-x_i|^2}.
\]
This is of order $\mathcal{O}(n^{-1/2})$ compared to the exponentially decay in the $i\neq j$ case.
As a consequence, in the expansion of the determinant in (\ref{eqn6}) over all permutations of $\mathcal{S}_{2k}$, only the terms consists of the entries in the diagonal $2\times2$ blocks can make a non-trivial contribution. Indeed, their contribution is exactly
\begin{equation}
\prod_{i=1}^k\int_{\lambda_i+B_n}\rho_2(\lambda_i, x_i)dx_i  \sim
 \prod_{i=1}^k\biggl(
n^2\pi^{-2}\int_{\lambda_i+B_n}(1-\exp(-n|\lambda_i-x_i|^2))dx_i
\biggl)^k\label{eqn7}
\end{equation}
Let $x_i = \lambda_i + n^{-3/4}u_i$, where $u_i\in B$, then
\[
\text{(\ref{eqn7})} = \biggl(n^{1/2}\pi^{-2}\int_B(1-\exp(-n^{-1/2}|u|^2))du\biggl)^k 
\sim\biggl(\pi^{-2}\int_{B}|u|^2du\biggl)^k.
\]

Having analyzed the $m=0$ case, next we prove that the contribution of the terms corresponding to $m\geq1$ is negligible.

By lemma \ref{lemma1} we have, for $m\geq1$,
\[
\rho_{2k+m}(\lambda_1, x_1, \ldots, \lambda_{k}, x_k, y_1, \ldots, y_m)\leq\rho_{2k}(\lambda_1, x_1, \ldots, \lambda_k, x_k)\prod_{i=1}^m\rho_1(y_i).
\]
Thus the contribution for $m\geq1$ is bounded by
\begin{equation}\label{v3.6}
\biggl(\int_{\lambda_1+B_n}dx_1\ldots\int_{\lambda_k+B_n}dx_k\rho_{2k}(\lambda_1, x_1, \ldots, \lambda_k, x_k)\biggl)\sum_{m\geq1}\frac{1}{m!}\biggl(\int_{b_n}\rho_1(y)dy\biggl)^m.
\end{equation}
Here $b_n = (\lambda_1+B_n)\sqcup\ldots\sqcup(\lambda_k+B_n)$ has size $\mathcal{O}(n^{-3/2})$. However since $|K_n(z)|\leq \exp(n|z|)$, we have
\[
|\rho_1(y)| = n\pi^{-1}\exp(-n|y|^2)|K_n(|y|^2)|\leq n\pi^{-1} = \mathcal{O}(n). 
\] Thus the second factor of (\ref{v3.6}) will tend to zero. Since the first factor is just the $m=0$ case, which converges as proved. Thus the whole expression converge to zero. Combining our result for the $m=0$ and the $m\geq1$ case, we successfully proved that $\widetilde{\rho}_k(\lambda_1, \ldots, \lambda_k) \to
(\frac{1}{\pi^2}\int_B|u|^2du)^k$ when $|\lambda_i|<1$.

As the second step, we prove that for distinct $\lambda_i$'s, if there exists some $i_0$ such that $|\lambda_{i_0}| > 1$, then $\widetilde{\rho}_k(\lambda_1, \ldots, \lambda_k) \to 0$. Indeed, we can assume that $\lambda_1, \lambda_2, \ldots, \lambda_p > 1+\epsilon_0$ and $\lambda_{p+1}, \ldots, \lambda_k < 1-\epsilon_0$ for some $\epsilon_0>0$ and $ p <k$. Then from the first part, we have shown in (\ref{v3.6}) that
\begin{multline}
\widetilde{\rho}_k(\lambda_1, \ldots, \lambda_k)\leq \biggl(\int_{\lambda_1+B_n}dx_1\ldots\int_{\lambda_k+B_n}dx_k\rho_{2k}(\lambda_1, x_1, \ldots, \lambda_k, x_k)\biggl)\times \\\sum_{m\geq0}\frac{1}{m!}\biggl(\int_{b_n}\rho_1(y)dy\biggl)^m.
\end{multline}
Again by Lemma \ref{lemma1} we have
\[
\rho_{2k}(\lambda_1, x_1, \ldots, \lambda_k, x_k)\leq\prod_{i=1}^k\rho_2(\lambda_i, x_i).
\]
Thus
\begin{equation}\label{v3.7}
\widetilde{\rho}_k(\lambda_1, \ldots, \lambda_k)\leq\prod_{i=1}^k\biggl(\int_{\lambda_i+B_n}\rho_2(\lambda_i, x)dx\biggl)\sum_{m\geq0}\frac{1}{m!}\biggl(\int_{b_n}\rho_1(y)dy\biggl)^m.
\end{equation}
We have already shown that for $i >p$, 
\[
\int_{\lambda_i + B_n}\rho_2(\lambda_i, x)dx \to \pi^{-2}\int_B|u|^2du
\]
which is finite. But for $i\leq p$, for $n$ sufficiently large, for all $x\in\lambda_i+B_n$, for $n$ sufficiently large we have $|x|^2\geq 1+\epsilon_0/2$. Then by Lemma \ref{good} we have
\[
|K_n(|x|^2)|\leq \mathcal{O}(1)\cdot e^{n|x|^2}\Bigl(|x|^2e^{-(|x|^2-1)}\Bigl)^n \leq  
\mathcal{O}(1)\cdot e^{n|x|^2}\Bigl((1+\epsilon_0/2)e^{-\epsilon_0/2}\Bigl)^n
\]
Hence for $x\in \lambda_i+B_n$,
\begin{eqnarray*}
\rho_2(\lambda_i, x) & = & n^2\pi^{-2}\exp(-n|\lambda_i|^2-n|x|^2)\Bigl( K_n(|\lambda_i|^2)K_n(|x|^2) - |K_n(\lambda_ix^*)|^2\Bigl)  \\
&\leq &n^2\pi^{-2}\exp(-n|\lambda_i|^2-n|x|^2)K_n(|\lambda_i|^2)K_n(|x|^2) \\
&\leq & \mathcal{O}(1)\cdot n^2\cdot \Bigl((1+\epsilon_0/2)e^{-\epsilon_0/2}\Bigl)^{2n} \to 0.
\end{eqnarray*}
Thus $\prod_{i=1}^k\int_{\lambda_i+B_n}\rho_2(\lambda_i, x)dx\to 0$ and the second factor in (\ref{v3.7}) is bounded as having been previously shown.
Hence we have $\widetilde{\rho}_k(\lambda_1, \ldots, \lambda_k) \to 0$. The proof is complete.
\end{proof}

%%%%%%%%%%%%%%%%%%%%%%%%%%%%%%%%%%%%%%%%%%%%%%%%

\begin{lemma}\label{lemma4}
(Uniform boundedness) There is a constant $C$ depending only on the set $A$ such that, for any $n\geq1$ and $(\lambda_1, \ldots, \lambda_k)\in \Omega$, 
\[
\widetilde{\rho}_k(\lambda_1, \ldots, \lambda_k)<C.
\]
\end{lemma}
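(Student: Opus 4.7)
The plan is to combine the inclusion--exclusion representation (\ref{eqn5}) of $\widetilde\rho_k$ with the Hadamard-type inequality of Lemma \ref{lemma1}, reducing the problem to a single uniform estimate on the two-point integral $\int_{\lambda+B_n}\rho_2(\lambda, x)dx$, which I can then control case by case in the three regions where the kernel $K_n$ behaves differently.

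First I would apply Lemma \ref{lemma1} twice inside (\ref{eqn5}): once to peel off the free variables as
\[
\rho_{2k+m}(\lambda_1, x_1,\ldots,\lambda_k, x_k, y_1,\ldots,y_m)\leq \rho_{2k}(\lambda_1, x_1,\ldots,\lambda_k, x_k)\prod_{j=1}^m\rho_1(y_j),
\]
and a second time to decouple paired variables as $\rho_{2k}(\lambda_1, x_1,\ldots,\lambda_k, x_k)\leq\prod_{i=1}^k\rho_2(\lambda_i, x_i)$. Passing to absolute values termwise inside the alternating sum (\ref{eqn5}) and re-summing the series in $m$ yields
\[
\widetilde\rho_k(\lambda_1,\ldots,\lambda_k) \leq \exp\biggl(\int_{b_n}\rho_1(y)dy\biggr)\prod_{i=1}^k\int_{\lambda_i+B_n}\rho_2(\lambda_i, x)dx.
\]
Since $e^{-n|y|^2}K_n(|y|^2)\in[0,1]$ gives $\rho_1\leq n/\pi$, and $|b_n|=k|B_n|=\mathcal{O}(n^{-3/2})$, the exponential prefactor is bounded by $e^{\mathcal{O}(n^{-1/2})}$. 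So the entire question reduces to bounding $\int_{\lambda+B_n}\rho_2(\lambda, x)dx$ uniformly in $\lambda$.

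For this two-point integral I would split $\mathbb{C}$ into the bulk $\{|\lambda|^2\leq 1-n^{-0.02}\}$, the exterior $\{|\lambda|^2\geq 1+n^{-0.02}\}$, and the transition annulus in between. In the bulk, the computation already carried out in the $m=0$ case of Lemma \ref{lemma3} shows the normalized $2\times 2$ determinant is $(1-e^{-n|\lambda-x|^2})(1+o(1))$, whence $\rho_2(\lambda, x)\lesssim n^3|\lambda-x|^2$ and the integral converges to $\pi^{-2}\int_B|u|^2du$ uniformly in $\lambda$ in the bulk. In the exterior, the analysis of the ``Outer Part'' in the proof of Lemma \ref{lemma2}, driven by part 3 of Lemma \ref{good}, shows $\rho_2$ is exponentially small on $\lambda+B_n$, so the integral tends to zero uniformly.

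The main obstacle is the transition annulus, where neither bulk nor exterior asymptotics apply. I would handle it by a second-order Taylor expansion of $R_n$. Writing
\[
\rho_2(\lambda, x)\cdot (\pi/n)^2 \;=\; pq - e^{-n|\lambda-x|^2}r^2
\]
with $p=1-R_n(|\lambda|^2)$, $q=1-R_n(|x|^2)$, $r=|1-R_n(\lambda x^*)|$, and substituting $x=\lambda+n^{-3/4}u$ with $u\in B$, one expands $q$ and $r^2$ in powers of $n^{-3/4}$ using the explicit formula $R_n'(z)=n^n z^{n-1}e^{-nz}/(n-1)!$. By Stirling's formula $R_n'$ and $R_n''$ are both $\mathcal{O}(\sqrt n)$ for $|z|$ near $1$. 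The key cancellation is that the zeroth-order term $p^2$ and the first-order term $-2pR_n'(|\lambda|^2)n^{-3/4}\Re(\lambda u^*)$ appearing in the expansions of $pq$ and of $r^2$ agree identically, so only a second-order remainder survives. A careful accounting shows this remainder is $\mathcal{O}(n^{-3/2}|u|^2\cdot((R_n')^2+pR_n'+pR_n''))=\mathcal{O}(n^{-1/2}|u|^2)$, so $\rho_2(\lambda, x)=\mathcal{O}(n^{3/2})$ pointwise on $\lambda+B_n$; integrating over a region of Lebesgue measure $\mathcal{O}(n^{-3/2})$ then yields a uniform $\mathcal{O}(1)$ bound in the transition annulus. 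Combining the three regional estimates completes the proof.
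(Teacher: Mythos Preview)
Your reduction to the single two--point integral via Lemma~\ref{lemma1} and the bound $\rho_1\le n/\pi$ is exactly the paper's opening move (this is the inequality (\ref{v3.7})). From there the two arguments diverge in organization but rest on the same analytic core.

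The paper partitions into the \emph{fixed} regions $|\lambda|<0.12$, $|\lambda|>100$, and $0.12\le|\lambda|\le100$; in the large middle zone it rewrites $\rho_2$ as in (\ref{eq4}), performs row/column subtractions on the $2\times2$ determinant to obtain (\ref{eqn14}), and then represents the resulting differences as line integrals of $f'$ and the mixed second difference as a double integral of $f'(\xi\eta)+\xi\eta f''(\xi\eta)$. The crucial estimate there is $(z-1)z^{n}e^{n(1-z)}=\mathcal{O}(n^{-1/2})$, proved by optimizing $g(u)=u((1+u)e^{-u})^n$; this is equivalent to $f''=\mathcal{O}(n)$ near the unit circle. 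Your approach instead takes an $n$-dependent decomposition and, on the narrow annulus, expands $pq-e^{-n|\lambda-x|^2}r^2$ directly in powers of $n^{-3/4}$, exhibiting the same zeroth/first order cancellation and collecting the surviving $n^{-3/2}|u|^2$ terms. The two methods are really the same cancellation viewed differently: your Taylor expansion and the paper's integral representation both reduce to controlling $f'=-R_n'$ and $f''=-R_n''$ near $|z|=1$. Your shrinking annulus lets you borrow the bulk and exterior bounds from Lemmas~\ref{lemma2}--\ref{lemma3} rather than redo them; the paper's fixed partition keeps everything self-contained but forces the $f''$ estimate to hold on the whole strip $0.12\le|z|\le100$ with $\Im z=\mathcal{O}(n^{-3/4})$.

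One correction: your claim that $R_n''=\mathcal{O}(\sqrt{n})$ is not right. Differentiating $R_n'(z)=\tfrac{n^n}{(n-1)!}e^{-nz}z^{n-1}$ gives $R_n''(z)\sim\sqrt{n/2\pi}\,e^{n(1-z)}z^{n-2}(nz-(n-1))$, which for real $z=1+u$ is of order $n^{3/2}\,u\,((1+u)e^{-u})^{n}$; optimizing in $u$ (exactly the $g(u)$ computation the paper carries out) yields $R_n''=\mathcal{O}(n)$, not $\mathcal{O}(\sqrt{n})$. Fortunately this does not damage your bound, since in your remainder $n^{-3/2}|u|^2\bigl((R_n')^2+pR_n'+pR_n''\bigr)$ the dominant contribution is already $(R_n')^2=\mathcal{O}(n)$, and $pR_n''=\mathcal{O}(n)$ matches it rather than beats it. You should also note, for completeness, that the third--order Taylor remainder needs $R_n'''=\mathcal{O}(n^{3/2})$ on the relevant segment, giving a harmless $\mathcal{O}(n^{-3/4})$ contribution to $pq-r^2$.
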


%%%%%%%%%%%%%%%%%
%
%  Proof 
% 
%%%%%%%%%%%%%%%%%

\begin{proof} In (\ref{v3.7}) we have shown that
\[
\widetilde{\rho}_k(\lambda_1, \ldots, \lambda_k)\leq\prod_{i=1}^k\biggl(\int_{\lambda_i+B_n}\rho_2(\lambda_i, x)dx\biggl)\sum_{m\geq0}\frac{1}{m!}\biggl(\int_{b_n}\rho_1(y)dy\biggl)^m.
\]
As is proved in Lemma \ref{lemma3},
the second factor does not depend on $\lambda$ and is convergent. 

We just need to prove that the first product is uniformly bounded. We note 
\begin{equation}\label{eqn13}
\rho_2(\lambda, x)  =  n^2\pi^{-2}
\det\left(
\begin{array}{cc}
e^{-n|\lambda|^2}K_n(|\lambda|^2) & e^{-n(|x|^2+|\lambda|^2)/2}K_n(\lambda x^*) \\
e^{-n(|x|^2+|\lambda|^2)/2}K_n(x \lambda^*) & e^{-n|x|^2}K_n(|x|^2)
\end{array}
\right).
\end{equation}

We consider the following three regions separately and shall prove that $\rho_2(\lambda, x)$ is uniformly bounded on all of them:
(1)$|\lambda| < 0.12,$ (2) $|\lambda| > 100$ and $(3) 0.12\leq|\lambda|\leq 100$.

%%%%%%%%%%%%%%%%%
%
%    Case 1: \lambda is small
%
%%%%%%%%%%%%%%%%%
\textsc{Part 1.} If $|\lambda| < 0.12$, then as $n$ is sufficiently large, $|\lambda|^2, |x|^2$ and $|\lambda x^*|$ are less than $0.02$. Writing $K_n(z)$ as $e^{nz}(1-R_n(z))$, by part (1) of Lemma \ref{good}, we get
\begin{eqnarray*}
\rho_2(\lambda, x) & = &  n^2\pi^{-2}
\det\left(
\begin{array}{cc}
1 & e^{-n(|x|^2+|\lambda|^2-2\lambda x^*)/2} \\
e^{-n(|x|^2+|\lambda|^2-2x\lambda^*)/2} & 1
\end{array}
\right) + o(1) \\
& = & n^2\pi^{-2}\Bigl(1-\exp(-n|x-\lambda|^2)\Bigl) +o(1)\\
& = & \mathcal{O}(n^2) \cdot \mathcal{O}(n^{-1/2}) + o(1) = \mathcal{O}(n^{3/2}).
\end{eqnarray*}
%All the $\mathcal{O}(\cdot)$ notation is uniform as $|\lambda|$ is uniformly bounded from 1. 
But the integration domain is $\mathcal{O}(n^{-3/2})$. Hence the integration of $\rho_2(\lambda, x)$ in $\lambda + B_n$ is uniformly bounded.

%%%%%%%%%%%%%%%%%
%
%    Case 2 \lambda is large
%
%%%%%%%%%%%%%%%%%
\textsc{Part 2.} If $|\lambda|>100$, then for sufficiently large $n$, we can ensure $|x|^2, |\lambda|^2, |x\lambda^*|>100$. By part (c) of the Lemma \ref{good}, this implies $|K_n(\lambda)^*|\leq C\cdot(100e^{-99})^n$ for some constants $C$. Hence
\[
n^2\pi^{-2}e^{-n(|x|^2-|\lambda|^2)/2}|K_n(\lambda x^*)| \leq Cn^2\pi^{-2}\cdot (100e^{-99})^n
\]
which is uniformly bounded above. The same is true for other terms in the determinant. Hence the integration of $\rho_2(\lambda, x)$ is bounded, just as in Case 1.

%%%%%%%%%%%%%%%%%
%
%    Case 3 \lambda is median
%
%%%%%%%%%%%%%%%%%
\textsc{Part 3.} Finally, we prove the most tricky part: $0.12\leq |\lambda|\leq 100$.
For $\rho_2(\lambda, x)$, we have
\begin{eqnarray}
\rho_2(\lambda, x)
& = & 
n^2\pi^2
\det\left(
\begin{array}{cc}
e^{-n|\lambda|^2}K_n(|\lambda|^2) & e^{-n\lambda x^*}K_n(\lambda x^*) \\
e^{-nx\lambda^*}K_n(x \lambda^*) & e^{-n|x|^2}K_n(|x|^2)
\end{array}
\right) + \nonumber\\
& & \qquad\qquad n^2\pi^2(e^{-2n\Re(x\lambda^*)}-e^{-n|\lambda|^2-n|x|^2})|K_n(\lambda x^*)|^2  \label{eq4}
\end{eqnarray}
First we analyze the second term in (\ref{eq4}). Using the inequality $|K_n(z)|\leq e^{n|z|}$ we have
\begin{eqnarray}
&&n^2\pi^{-2}|e^{-2n\Re(x\lambda^*)}-\exp({-n|\lambda|^2-n|x|^2})|\cdot|K_n(\lambda x^*)|^2 \nonumber\\
& \leq & n^2\pi^{-2}|e^{-2n\Re(x\lambda^*)}-\exp({-n|\lambda|^2-n|x|^2})|\cdot\exp({2n|\lambda x^*|}) \nonumber\\
& \leq & n^2\pi^{-2}\Bigl|\exp\Bigl(2n(|x\lambda^*| - \Re(x\lambda^*))\Bigl) - \exp\Bigl(-n(|\lambda|-|x|)^2\Bigl)\Bigl| \nonumber\\
\label{v3.8}&& \\
& \leq & n^2\pi^{-2}\Bigl|\exp\Bigl(2n(|x\lambda^*| - \Re(x\lambda^*))\Bigl)-1\Bigl| + n^2\pi^{-2}\Bigl|\exp\Bigl(-n(|\lambda|-|x|)^2\Bigl)-1\Bigl| \nonumber
\end{eqnarray}
Here (\ref{v3.8}) refers to the line next to it. For the first term of (\ref{v3.8}), we have $|x\lambda^*| - \Re(x\lambda^*) = \mathcal{O}(n^{-3/2})$ (see Remark \ref{remark1}). Thus the first term is of order $\mathcal{O}(n^2)\cdot \mathcal{O}(n^{-1/2}) = \mathcal{O}(n^{3/2})$.

Moreover, for the second term of (\ref{v3.8})
\[
n^2\pi^{-2}\Bigl(1-e^{-n(|\lambda|-|x|)^2}\Bigl) = \mathcal{O}(n^2)\cdot\Bigl(1-e^{-\mathcal{O}(n^{-1/2})}\Bigl) = \mathcal{O}(n^2)\cdot \mathcal{O}(n^{-1/2}) = \mathcal{O}(n^{3/2}).
\]
Hence (\ref{v3.8}) $=\mathcal{O}(n^{3/2})$ uniformly.
Since the integration region is of order $\mathcal{O}(n^{-3/2})$, we obtain that the integral of the second term in (\ref{eq4}) is $\mathcal{O}(n^{-3/2})\cdot \mathcal{O}(n^{3/2}) = \mathcal{O}(1)$ which is uniformly bounded.

Now it remains to prove that the first term in (\ref{eq4}) is $\mathcal{O}(n^{3/2})$ hence is uniformly bounded after integration on the region of order $\mathcal{O}(n^{-3/2})$.

Define $f(z) := e^{-nz}\sum_{\ell = 0}^{n-1}n^\ell z^\ell/\ell!$. Then the determinant is nothing but
\begin{equation}\label{eqn14}
\left|
\begin{array}{cc}
f(|\lambda|^2) & f(\lambda x^*) \\
f(x\lambda^*) & f(|x|^2) \\
\end{array}
\right|
= 
\left|
\begin{array}{cc}
f(|\lambda|^2) & f(\lambda x^*)-f(|\lambda|^2) \\
f(x\lambda^*)-f(|\lambda|^2) & f(|x|^2)-f(\lambda x^*)-f(x\lambda^*)+f(|\lambda|^2)  \\
\end{array}
\right|
\end{equation}
the equality holds because the determinant is invariant after subtracting the first row from the second one and then subtracting the first column from the second one. In order to prove $\rho_2(\lambda, x) = \mathcal{O}(n^{3/2})$, we just need to prove that the determinant in (\ref{eqn14}) is of order $\mathcal{O}(n^{-1/2})$. A simple calculation with Stirling's formula gives 
\begin{eqnarray*}
f'(z) & = & -ne^{-nz}\frac{n^nz^{n-1}}{n!} 
 =  -\sqrt{\frac{n}{2\pi}}e^{-\beta_n}e^{n(1-z)}z^{n-1},
\end{eqnarray*}
for some $\beta \in (\frac{1}{12n}, \frac{1}{12n+1})$.
we have 
\begin{equation}\label{eqn13}
f(x\lambda^*)-f(|\lambda|^2) = \int^{x\lambda^*}_{|\lambda|^2}f'(\xi)d\xi
\end{equation}
where the integration is along the straight line connecting the two end points. For $\xi$ lying between $|\lambda|$ and $x\lambda^*$, $\Im(\xi) = \mathcal{O}(n^{-3/4})$. 
\[
|f'(\xi)| \leq \mathcal{O}(1)\cdot \sqrt{n} \Bigl(e^{1-|\xi|}|\xi|\Bigl)^n \cdot e^{n(\Re(\xi) - |\xi|)} \leq \mathcal{O}(1)\cdot\sqrt{n}\cdot e^{n(\Re(\xi) - |\xi|)}.
\]

By Remark \ref{remark1}, $e^{n(\Re(\xi) - |\xi|)} = \mathcal{O}(1)$ if $\xi$ is uniformly bounded away from zero and $\Im(\xi) = \mathcal{O}(n^{-3/4})$. This can be achieved when $|\lambda| \geq 0.12$. Hence we have $f'(\xi) = \mathcal{O}(\sqrt{n})$ uniformly. However, in (\ref{eqn13}) the length of the integration region is  $|\lambda||x-\lambda| = \mathcal{O}(n^{-3/4})$. Thus we have $f(x\lambda^*)-f(|\lambda|^2) = \mathcal{O}(n^{-1/4})$. Similarly $f(\lambda x^*)-f(|\lambda|^2) = \mathcal{O}(n^{-1/4})$.
As a conclusion, the second term of the expansion of the determinant in (\ref{eqn14}) $(f(\lambda x^*) - f(|\lambda|^2))(f(x\lambda^*) - f(|\lambda|^2))$, is of order $\mathcal{O}(n^{-1/2})$. 

In order to prove that the first term in the expansion of (\ref{eqn14}) is also of order $\mathcal{O}(n^{-1/2})$, we just need to prove $f(|x|^2)-f(\lambda x^*)-f(x\lambda^*)+f(|\lambda|^2) = \mathcal{O}(n^{-1/2})$ because $f(|\lambda|^2)$ is bounded above.  Now
\begin{multline}\label{eqn15}
f(|x|^2)-f(\lambda x^*)-f(x\lambda^*)+f(|\lambda|^2) \\
 =  \int_\lambda^x (x^*f'(\xi x^*) - \lambda^*f'(\xi\lambda^*))d\xi = 
 \int_{\lambda}^{x} \int_{\lambda^*}^{x^*} \Bigl(f'(\xi\eta) + \xi\eta f''(\xi\eta)\Bigl)d\eta d\xi
\end{multline}
and 
\begin{equation}\label{v3.9}
f'(z) + zf''(z)  =  e^{-nz}\frac{n^nz^{n-1}}{n!}\cdot n^2(z-1)
 =  \frac{n^2}{e^{\beta_n}\sqrt{2\pi n}}z^{n-1}(z-1)e^{n(1-z)}
\end{equation}
where $\beta_n\in(\frac{1}{12n}, \frac{1}{12n+1})$.

Since the integration region of (\ref{eqn15}) is $\mathcal{O}(n^{-3/2})$, we just need to prove that $f'(z) + zf''(z) = \mathcal{O}(n)$ for $z$ such that $0.01\leq|z|\leq100$ and $\Im(z) = \mathcal{O}(n^{-3/4})$.

Thus by (\ref{v3.9}) the only thing left to do is to prove $(z-1)z^{n-1}e^{n(1-z)} = \mathcal{O}(n^{-1/2})$, or 
\begin{equation}\label{v3.10}
(z-1)^2z^{2n}e^{2n(1-z)} = \mathcal{O}(n^{-{1}}).
\end{equation}

Let $|z| = 1+u,$ and $z = (1+u)e^{i\theta}$. Then 
\begin{eqnarray*}
|(z-1)z^ne^{n(1-z)}|^2 & = & \mathcal{O}(1)\cdot |z-1|^2\Bigl((1+u)e^{-u}\Bigl)^{2n}
\end{eqnarray*}
where we used Remark \ref{remark1} again. 
Now since $\theta = \mathcal{O}(n^{-3/4})$ and $|u|<100$,
\begin{eqnarray*}
|z-1|^2 & = & u^2 + 2(1+u)(1-\cos\theta)\leq u^2 + (1+u)\theta^2\leq u^2 + 101\theta^2,
\end{eqnarray*}
thus $|z-1|^2 = u^2 + \mathcal{O}(n^{-3/2})$ and the constant in $\mathcal{O}(\cdot)$ is uniform in $z$. Hence 
\[
|(z-1)z^ne^{n(1-z)}|^2 \leq \mathcal{O}(1)\cdot u^2\Bigl((1+u)e^{-u}\Bigl)^{2n} + \mathcal{O}(n^{-3/2}).
\]
Let $g(u) = u\Bigl((1+u)e^{-u}\Bigl)^n$. Then it is easy to obtain that $g(u)$ achieves its maximum when $u = (1+\sqrt{1+4n})/(2n)= \mathcal{O}(n^{-1/2})$. The maximum value of $g(u) = \mathcal{O}(n^{-1/2})$. We obtain
\[
|(z-1)z^ne^{n(1-z)}|^2 \leq \mathcal{O}(1)\cdot g(u)^2 + \mathcal{O}(n^{-3/2}) \leq \mathcal{O}(n^{-1}).
\]
This is exactly (\ref{v3.10}). Thus we successfully proved that $f''(z) = \mathcal{O}(n)$, or, the determinant in $(\ref{eqn14})$ is of order $\mathcal{O}(n^{-1/2})$. We conclude that the integration of $\rho_2(\lambda, x)$ is uniformly bounded. 

From all the three parts above, $\widetilde{\rho}_k(\lambda_1, \ldots, \lambda_k)<C$ for some constant $C$ when $(\lambda_1, \ldots, \lambda_k)\in\Omega$. 
\end{proof}

%%%%%%%%%%%%%%%%%%%%%%%%%%%%%%%%%%%%%%%%%

\begin{lemma}\label{lemma5}
(Negligible set) Let $\overline{\Omega}$ be the complement of $\Omega$ in $I^k$. Then as $n\to\infty$
\[
\int_{\overline{\Omega}}\widetilde{\rho}_k(\lambda_1, \ldots, \lambda_k)d\lambda_1\ldots d\lambda_k\longrightarrow 0.
\]
\end{lemma}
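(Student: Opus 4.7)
The plan rests on two observations: (i) the Lebesgue measure of $\overline{\Omega}$ is of order $n^{-3/2}$, and (ii) a uniform bound $\widetilde{\rho}_k(\lambda_1,\ldots,\lambda_k) \leq C$ holds on all of $I^k$, not merely on $\Omega$. Combining these two facts yields the desired conclusion.

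For the first step, I would decompose $\overline{\Omega} = \bigcup_{i<j}\overline{\Omega}_{ij}$, where $\overline{\Omega}_{ij} = \{(\lambda_1,\ldots,\lambda_k) \in I^k : (\lambda_i + B_n) \cap (\lambda_j + B_n) \neq \emptyset\}$. Choosing $c$ so that $A \subset (0,c)$, the set $B_n$ is contained in the disk of radius $cn^{-3/4}$, so any $(\lambda_1,\ldots,\lambda_k) \in \overline{\Omega}_{ij}$ satisfies $|\lambda_i - \lambda_j| \leq 2cn^{-3/4}$. Hence $|\overline{\Omega}_{ij}| \leq |I|^{k-1}\cdot \pi(2c)^2 n^{-3/2}$, and summing over the $\binom{k}{2}$ pairs gives $|\overline{\Omega}| = \mathcal{O}(n^{-3/2})$.

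For the second step, I would establish a uniform bound $\widetilde{\rho}_k \leq C^k$ on $I^k$. The idea is to bypass the inclusion-exclusion identity (\ref{eqn5}), which assumed the translates $\lambda_l + B_n$ to be disjoint, by using the direct upper bound derived from the implication that the event ``$\xi^{(n)}(\lambda_l + B_n) = 1$'' is contained in the weaker event ``there exists at least one eigenvalue $\lambda_{b_l}\neq \lambda_{a_l}$ in $\lambda_l + B_n$''. Applying this inequality to each of the $k$ factors and expanding in the same manner as in the derivation of (\ref{v3.7}), one arrives at
\[
\widetilde{\rho}_k(\lambda_1, \ldots, \lambda_k) \leq \prod_{l=1}^k\biggl(\int_{\lambda_l + B_n}\rho_2(\lambda_l, x_l)\,dx_l\biggr) \sum_{m \geq 0}\frac{1}{m!}\biggl(\int_{b_n}\rho_1(y)\,dy\biggr)^m
\]
up to lower-order coincidence contributions (arising when some of the auxiliary indices $b_l$ collide with another $a_{l'}$ or with one another), each of which is bounded by a similar Hadamard-type expression via Lemma~\ref{lemma1}. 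By the analysis performed in the proof of Lemma~\ref{lemma4}, $\int_{\lambda_l + B_n}\rho_2(\lambda_l, x_l)\,dx_l \leq C$ uniformly in $\lambda_l \in I$, and $\int_{b_n}\rho_1(y)\,dy = \mathcal{O}(n^{-1/2})$ since $|b_n| \leq k |B_n| = \mathcal{O}(n^{-3/2})$ and $\rho_1 \leq n\pi^{-1}$, irrespective of whether the $\lambda_l + B_n$ overlap. Thus $\widetilde{\rho}_k \leq C^k$ uniformly on $I^k$.

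Combining the two pieces, $\int_{\overline{\Omega}}\widetilde{\rho}_k\,d\lambda_1\cdots d\lambda_k \leq C^k \cdot |\overline{\Omega}| = \mathcal{O}(n^{-3/2}) \to 0$, as claimed. The principal obstacle is the justification of the uniform bound on all of $I^k$ rather than merely on $\Omega$, since the exact inclusion-exclusion formula (\ref{eqn5}) was derived under the disjointness of the translates. The resolution is to replace that identity by the ``at least one extra eigenvalue'' upper bound, which does not require disjointness, at the cost of bookkeeping for the coincidence corrections; these are however controlled by the same determinantal inequality of Lemma~\ref{lemma1} and by the uniform bound on $\int_{\lambda_l + B_n}\rho_2$ already established.
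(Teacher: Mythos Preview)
Your proposal contains a genuine gap: the claimed uniform bound $\widetilde{\rho}_k\leq C^k$ on all of $I^k$ is false, and this is precisely why Lemma~\ref{lemma4} was stated only on $\Omega$. The paper itself flags this in the outline of Section~2: ``although the $\widetilde{\rho}_k$ will explode in the rest of the minor region\ldots''. The failure of your bound is not a technicality of the inclusion--exclusion formula~(\ref{eqn5}); it is that the coincidence terms you dismiss as ``lower order'' are in fact the \emph{dominant} contribution on $\overline{\Omega}$. Consider already $k=2$ with $\lambda_2\in\lambda_1+B_n$ and $\lambda_1\in\lambda_2+B_n$. Then the thinning condition is satisfied by the pair $(\lambda_1,\lambda_2)$ alone, with no extra witness $x_1,x_2$: the relevant density is $\rho_2(\lambda_1,\lambda_2)$ itself (not its integral over $\lambda_1+B_n$), and for $|\lambda_1-\lambda_2|\sim n^{-3/4}$ one has $\rho_2(\lambda_1,\lambda_2)=n^2\pi^{-2}(1-e^{-n|\lambda_1-\lambda_2|^2})=\mathcal{O}(n^{3/2})$, not $\mathcal{O}(1)$. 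More generally, a configuration with $p<k$ clusters carries $\widetilde{\rho}_k=\mathcal{O}(n^{k-p})$, so even granting your measure bound $|\overline{\Omega}|=\mathcal{O}(n^{-3/2})$ the product $\mathcal{O}(n^{k-p})\cdot\mathcal{O}(n^{-3/2})$ fails to vanish once $k-p\geq 2$.

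The correct argument, which the paper carries out, is to stratify $\overline{\Omega}$ by the number $p$ of clusters (connected components under the relation $\lambda_j-\lambda_i\in B_n$). On the stratum with $p$ clusters one picks a representative $\lambda_{i_j}$ from each cluster (the $\prec$-maximal element, so that its witness in $\lambda_{i_j}+B_n$ is forced to be a genuinely new eigenvalue), obtains
\[
\widetilde{\rho}_k(\lambda_1,\ldots,\lambda_k)\leq \int_{\lambda_{i_1}+B_n}\!\!\cdots\int_{\lambda_{i_p}+B_n}\rho_{k+p}(\lambda_1,\ldots,\lambda_k,x_1,\ldots,x_p)\,dx_1\cdots dx_p,
\]
and then applies Lemma~\ref{lemma1} to bound this by $\prod_{j\neq i_1,\ldots,i_p}\rho_1(\lambda_j)\cdot\prod_{j=1}^p\int_{\lambda_{i_j}+B_n}\rho_2(\lambda_{i_j},x)\,dx=\mathcal{O}(n^{k-p})$. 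The measure of the $p$-cluster stratum is $\mathcal{O}(n^{-3(k-p)/2})$ (since $k-p$ of the variables are confined to discs of radius $\mathcal{O}(n^{-3/4})$), so the integral over that stratum is $\mathcal{O}(n^{-(k-p)/2})\to 0$. The point is that one must match the growth rate of $\widetilde{\rho}_k$ to the decay rate of the measure \emph{stratum by stratum}; a single global bound on $\widetilde{\rho}_k$ over $\overline{\Omega}$ combined with a single global bound on $|\overline{\Omega}|$ cannot close the argument.
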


%%%%%%%%%%%%%%%%%
%
%  Proof 
% 
%%%%%%%%%%%%%%%%%

\begin{proof} The strategy for this proof is to show that $|\overline{\Omega}|$ decays sufficiently fast --- much faster than the growth of $\widetilde{\rho}_k(\lambda_1, \ldots, \lambda_k)$. Indeed, if $(\lambda_1, \ldots, \lambda_k)\in\overline{\Omega}$, then this means there exists some clusters of at least three eigenvalues, which is a rare event. To formalize this idea, we need to introduce a rigorous definition of the cluster.

We define a equivalence relation in $\{\lambda_1, \ldots, \lambda_k\}$. For any $\lambda_i, \lambda_j$, if $|\lambda_i-\lambda_j|\in B_n$ then we say $\lambda_i\sim\lambda_j$. If there are a series of points $\lambda_{i_1}, \ldots, \lambda_{i_t}$ such that $\lambda_i\sim\lambda_{i_1}, \lambda_{i_s}\sim\lambda_{i_{s+1}}$ for $s = 1, 2, \ldots, t-1$ and $\lambda_{i_{t}}\sim\lambda_{j}$, then we also say $\lambda_i\sim\lambda_j$. In the end, we define $\lambda_i\sim\lambda_i.$ 

Suppose there are $p$ equivalent classes. From each of them we draw representatives $\lambda_{i_1}, \ldots, \lambda_{i_p}$, which is the largest number in that class with respect to the ordering $(\cdot\prec\cdot)$. 

Geometrically, each equivalent class represents a cluster of eigenvalues. For a more intuitive graph, please see Figure \ref{fig1}. In this figure from the $25$ eigenvalues on the complex plane,  we have 4 equivalent classes (clusters). In each of them we have a representative eigenvalue which is at the top of the cluster.  
\begin{figure}[!h]
    \centering
    \includegraphics[scale=.5]{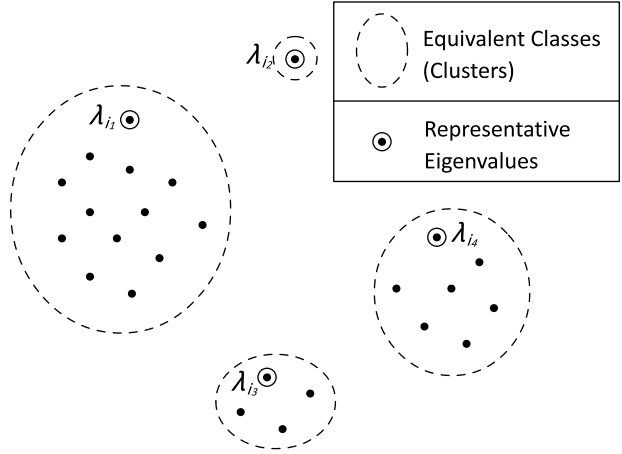}
    \caption{Equivalence Classes and Representative Eigenvalues. }
    \label{fig1}
\end{figure}

%After removing these $p$ points, suppose there are $\ell$ equivalent classes left, which are denoted as $\mathrm{Cl}_1, \ldots, \mathrm{Cl}_\ell$. These $\mathrm{Cl}_i$ are called clusters. 

Then we have the obvious bound
\begin{eqnarray*}
\widetilde{\rho}_k(\lambda_1, \ldots, \lambda_k) & \leq & \int_{\lambda_{i_1}+B_n}dx_1\ldots\int_{\lambda_{i_p}+B_n}dx_p\rho_{k+p}(\lambda_1, \ldots, \lambda_k, x_1, \ldots, x_p)\\
& \leq & \prod_{j\neq i_1, \ldots, i_p}\rho_1(\lambda_j)\cdot\prod_{j=1}^p\int_{\lambda_{i_j}+B_n}\rho_2(\lambda_{i_j}, x_j)dx_j.
\end{eqnarray*}
where we used Lemma \ref{lemma1} in the second inequality.  As is proved in the Lemma \ref{lemma4}, the second product is uniformly bounded in $\lambda_{i_j}$ and $n$. Thus we only need to deal with the first product. 

But $\rho_1(\lambda_1) = \mathcal{O}(n)$. Thus $\widetilde{p}(\lambda_1, \ldots, \lambda_k) = \mathcal{O}(n^{k-p})$.  But the region of the integral is of $\mathcal{O}(n^{-3(k-p)/2})$, thus $\widetilde{\rho}_k$ is of order $\mathcal{O}(n^{-\frac{1}{2}(k-p)})$.  Since $k>p$, otherwise $(\lambda_1, \ldots, \lambda_k)$ will belong to $\Omega$, we conclude that 
\[
\int_{\overline{\Omega}}\widetilde{\rho}_k(\lambda_1, \ldots, \lambda_k)d\lambda_1\ldots d\lambda_k\longrightarrow 0. 
\]
\end{proof}

\textsc{Proof of Theorem \ref{thm1}}. With Lemma \ref{lemma3}, \ref{lemma4}, \ref{lemma5}, the proof of (\ref{eqn3}) is quite straightforward. 
\begin{multline*}
\int_{I^k}\widetilde{\rho}_k(\lambda_1, \dots, \lambda_k)d\lambda_1\ldots\lambda_k\\  =  \int_{I^k}\mathbbm{1}_{\Omega}\widetilde{\rho}_k(\lambda_1, \dots, \lambda_k)d\lambda_1\ldots\lambda_k + \int_{I^k}\mathbbm{1}_{\overline{\Omega}}\widetilde{\rho}_k(\lambda_1, \dots, \lambda_k)d\lambda_1\ldots d\lambda_k
\end{multline*}
The first term is uniformly bounded by Lemma \ref{lemma4} thus we can use the dominated convergence theorem. The second term converge to zero as stated by Lemma \ref{lemma5}. The proof is complete. \qquad $\square$ \\

\textsc{Proof of Corollary \ref{col1}}
We first denote $\widetilde{t}^{(n)}_1\leq\ldots\leq\widetilde{t}^{(n)}_k$ to be the $k$ smallest elements in the set $\{|\lambda_i-\lambda_{i^*}|: 1\leq i<n\}$. Correspondingly we define $\widetilde{\tau}^{(n)}_\ell = (\pi/4)^{1/4}\widetilde{t}^{(n)}_\ell$ for $1\leq \ell\leq k$. We first analyze the behavior of $\widetilde{\tau}^{(n)}_\ell$ instead. 

The event $\{x_\ell<n^{3/4}\widetilde{\tau}_\ell^{(n)}<y_\ell, 1\leq\ell\leq k\}$ is equivalent to
\begin{eqnarray*}
\chi^{(n)}((4/\pi)^{1/4}(x_k, y_k), \mathbb{C}) &\geq & 1,\\ 
\chi^{(n)}((4/\pi)^{1/4}(x_\ell, y_\ell), \mathbb{C}) &= & 1, 1\leq\ell\leq k-1,\\ 
\chi^{(n)}((4/\pi)^{1/4}(y_{\ell-1}, x_\ell), \mathbb{C}) &= & 0, 1\leq\ell\leq k. 
\end{eqnarray*}
Here $y_0=0$. The independence property of the Poisson process yields that
\begin{multline*}
\mathbb{P}(x_\ell<n^{3/4}\widetilde{\tau}_\ell^{(n)}<y_\ell, 1\leq\ell\leq k) \\ \longrightarrow
\biggl(1-e^{-(y_k^4-x_k^4)}\biggl)\prod_{\ell=1}^{k-1}(y_\ell^4-x_\ell^4)e^{-(y_\ell^4-x_\ell^4)}\prod_{\ell=1}^ke^{-(x_\ell^4-y_{\ell-1}^4)} 
=\biggl(e^{-x_k^4}-e^{-y_k^4}\biggl)\prod_{\ell=1}^{k-1}(y_\ell^4-x_\ell^4).
\end{multline*}
Let $x_\ell\to y_\ell-$, we can get that the density function for $n^{3/4}(\widetilde{\tau}_1^{(n)}, \ldots, \widetilde{\tau}_k^{(n)})$, in the limit, is proportional to
\[
u_1^3\ldots u_k^3e^{-u_k^4}.
\]
Thus, by integration w.r.t. $u_1, \ldots, u_{k-1}$ we can get the distribution function for $n^{3/4}\widetilde{\tau}_k$.
\begin{eqnarray*}
\mathbb{P}(\widetilde{\tau}_k\leq x)& =& c_k\int_0^xu_k^3e^{-u_k^4}du_k\int_{0<u_1<\ldots<u_k}u_1^3\ldots u_{k-1}^3du_1\ldots du_{k-1} \\
& = & c_k\int_0^x u_k^{4k-1}e^{-u_k^4}du_k\int_{0<v_1<\ldots<v_{k-1}<1}v_1^3\ldots v_{k-1}^3dv_1\ldots dv_{k-1}.
\end{eqnarray*}

We see that $\mathbb{P}(\widetilde{\tau}_\ell^{(n)}\in dx) = x^{4k-1}e^{-x^4}dx$. To prove that $\tau^{(n)}_\ell$ also has the same distribution, we just need to prove
\[
\mathbb{P}(\widetilde{t}^{(n)}_\ell\neq t^{(n)}_\ell \text{ for some }\ell \leq k)\to 0 \text{ as } n\to\infty.
\]

Recall that $t^{(n)}_\ell$ is the $\ell$-th smallest entry in the set $\{|\lambda_i-\lambda_j|: i\neq j\}$. Now we denote $|\lambda_{p_\ell} - \lambda_{q_{\ell}}| = t^{(n)}_\ell$. Then our first observation is that if there does not exist three eigenvalues $\lambda_{i_1}, \lambda_{i_2}, \lambda_{i_3}$ such that $|\lambda_{i_2} - \lambda_{i_1}|, |\lambda_{i_3} - \lambda_{i_1}| < 2\widetilde{t}^{(n)}_k$, then the $2k$ points $\{\lambda_{p_\ell}, \lambda_{q_\ell}: \ell = 1, \ldots, k\}$ must be distinct. This further implies that $t^{(n)}_\ell = \widetilde{t}^{(n)}_\ell$ for $\ell = 1, \ldots, k$. In short, we have
\[
\{ \widetilde{t}^{(n)}_\ell\neq t^{(n)}_\ell \text{ for some }\ell \leq k \} \Longrightarrow \{\exists\lambda_{i_1}, \lambda_{i_2}, \lambda_{i_3} \text{ such that }|\lambda_{i_2} - \lambda_{i_1}|, |\lambda_{i_3} - \lambda_{i_1}| < 2\widetilde{t}^{(n)}_k\}
\]
If we denote 
\[
\Xi^{(3)} = \sum_{\lambda_{i_1}, \lambda_{i_2}, \lambda_{i_3}\text{ are distinct}}\delta_{(\lambda_{i_1}, \lambda_{i_2}, \lambda_{i_3})}
\]
and
\[
\mathcal{B} = \{(\lambda, x_1, x_2): |\lambda|<2, |x_1-\lambda|, |x_2-\lambda|\leq Mn^{{-3/4}}\}
\]
where $M$ is a large constant, then we must have
\begin{eqnarray}
&&\mathbb{P}(\widetilde{t}^{(n)}_\ell\neq t^{(n)}_\ell \text{ for some }\ell \leq k) \nonumber\\& \leq & \mathbb{P}(\Xi^{(3)}(\mathcal{B})\neq0) + \mathbb{P}(\widetilde{t}^{(n)}_k> Mn^{-3/4}/2)+ \mathbb{P}(\text{there exists some } |\lambda_i|\geq2). \nonumber\\
&& \label{316}\\
& \leq & \mathbb{E}(\Xi^{(3)}(\mathcal{B})) + \mathbb{P}(\widetilde{t}^{(n)}_k> Mn^{-3/4}/2) + \mathbb{P}(\text{there exists some } |\lambda_i|\geq2). \nonumber
\end{eqnarray}
From the proof in Lemma \ref{lemma2} we know $\mathbb{E}(\Xi^{(3)}(\mathcal{B}))\to0$ as ${n\to\infty}$. From the circular law we also know that $\mathbb{P}(\text{there exists some } |\lambda_i|\geq2)\to0.$ Thus taking $n\to\infty$ in (\ref{316}) yields
\[
\limsup_{n\to\infty}\mathbb{P}(\widetilde{t}^{(n)}_\ell\neq t^{(n)}_\ell \text{ for some }\ell \leq k)\leq \mathbb{P}(n^{3/4}\widetilde{\tau}_k> (\pi/4)^{1/4} M/2)
\]
Finally taking $M\to\infty$ proves the result.

%%%%%%%%%%%%%%%%%%%%%%%%%%%%%%%%%%%%%%%%%%%%%%
%%%%%%%%%%%%%%%%%%%%%%%%%%%%%%%%%%%%%%%%%%%%%%
%%%%%%%%%%%%%%%%%%%%%%%%%%%%%%%%%%%%%%%%%%%%%%
%%%%%%%%%%%%%%%%%%%%%%%%%%%%%%%%%%%%%%%%%%%%%%
%%%%%%%%%%%%%%%%%%%%%%%%%%%%%%%%%%%%%%%%%%%%%%
%%%%%%%%%%%%%%%%%%%%%%%%%%%%%%%%%%%%%%%%%%%%%%
%%%%%%%%%%%%%%%%%%%%%%%%%%%%%%%%%%%%%%%%%%%%%%
%%%%%%%%%%%%%%%%%%%%%%%%%%%%%%%%%%%%%%%%%%%%%%
%%%%%%%%%%%%%%%%%%%%%%%%%%%%%%%%%%%%%%%%%%%%%%
%%%%%%%%%%%%%%%%%%%%%%%%%%%%%%%%%%%%%%%%%%%%%%
%%%%%%%%%%%                              Section 4                                  %%%%%%%%%%%%%
%%%%%%%%%%%%%%%%%%%%%%%%%%%%%%%%%%%%%%%%%%%%%%
%%%%%%%%%%%%%%%%%%%%%%%%%%%%%%%%%%%%%%%%%%%%%%
%%%%%%%%%%%%%%%%%%%%%%%%%%%%%%%%%%%%%%%%%%%%%%
%%%%%%%%%%%%%%%%%%%%%%%%%%%%%%%%%%%%%%%%%%%%%%
%%%%%%%%%%%%%%%%%%%%%%%%%%%%%%%%%%%%%%%%%%%%%%
%%%%%%%%%%%%%%%%%%%%%%%%%%%%%%%%%%%%%%%%%%%%%%
%%%%%%%%%%%%%%%%%%%%%%%%%%%%%%%%%%%%%%%%%%%%%%
%%%%%%%%%%%%%%%%%%%%%%%%%%%%%%%%%%%%%%%%%%%%%%
%%%%%%%%%%%%%%%%%%%%%%%%%%%%%%%%%%%%%%%%%%%%%%
%%%%%%%%%%%%%%%%%%%%%%%%%%%%%%%%%%%%%%%%%%%%%%
%%%%%%%%%%%%%%%%%%%%%%%%%%%%%%%%%%%%%%%%%%%%%%
%%%%%%%%%%%%%%%%%%%%%%%%%%%%%%%%%%%%%%%%%%%%%%
%%%%%%%%%%%%%%%%%%%%%%%%%%%%%%%%%%%%%%%%%%%%%%
\section{Proof for the Wishart Ensemble Case}
Next we consider the complex Wishart ensemble. 
For $A_{mn}\sim W_2(m, n)$ being the Wishart ensemble, the joint distribution function of the eigenvalues of $A_{mn}$ is (see \cite{11}, \cite{12})
\[
p(\lambda_1, \ldots, \lambda_n) \propto \prod_{i<j}|\lambda_i-\lambda_j|^2\prod_{i=1}^n\lambda_i^{m-n} \exp\biggl(-m\sum_{i=1}^n\lambda_i\biggl).
\]
The $k$-point correlation function is given by 
\[
\rho_{k}(\lambda_1, \ldots, \lambda_k)  = \Bigl(K_n(\lambda_i, \lambda_j)\Bigl)_{i, j = 1}^k
\]
where
\begin{eqnarray}
&& K_n(x, y) \nonumber\\
& = & \sqrt{mn}\cdot\frac{\psi_{n-1}(mx)\psi_n(my)-\psi_n(mx)\psi_{n-1}(my)}{x-y}\label{4.-1} \\
& = & m^{3/2}n^{1/2}\biggl\{\psi_n(my)\int_0^1\psi'_{n-1}(mz_t)dt - \psi_{n-1}(my)\int_0^1\psi'_n(mz_t)dt\biggl\}\label{4.-2}
\end{eqnarray}
Here $z_t = tx + (1-t)y$ and
\[
\psi_\ell(x) = \sqrt{\frac{\ell!}{(\ell+m-n)!}}L_\ell^{(m-n)}(x)x^{(m-n)/2}e^{-x/2}
\]
and $L_{\ell}^{(m-n)}$ is the associated (generalized) Laguerre polynomial, i.e., 
\[
\int_0^\infty e^{-x}x^{m-n}L_p^{(m-n)}(x)L_q^{(m-n)}(x)dx = \frac{(p+m-n)!}{p!}\delta_{pq}.
\]

Now we denote $\beta := m/n$, we consider the case where $m, n\to\infty$ but their ratio $\beta\in[1, \infty)$. 

Note that in the Wishart ensemble case, thing are much simpler than the Ginibre case --- all the eigenvalues are real. We can use exactly the same scheme as in section 2 or in \cite{1}. The only difference is the kernel. In the following, we will argue Lemma \ref{lemma2} --- \ref{lemma5} still holds. For notational simplicity, we denote $a = (1-\beta^{-1/2})^2$ and $b = (1+{\beta^{-1/2}})^2$.

Again, for any $\epsilon>0$ we define 
\[
\xi^{(n)} := \sum_{j=1}^n\delta_{\lambda_i}\mathbbm{1}_{\lambda_i\in(a+\epsilon_0, b-\epsilon_0)} \text{ and }\widetilde{\xi}^{(n)} = \sum_{*}\delta_{\lambda_i}\mathbbm{1}_{\lambda_i\in(a+\epsilon_0, b-\epsilon_0)}
\]
where $\widetilde{\xi}^{(n)}$ is the thinning process by only keeping $\lambda_i$'s such that $\xi(\lambda_i + n^{-4/3}A) = 1$. To show $\chi^{(n)}(A\times I) = \widetilde{\xi}(I)$ asymptotically almost surely, the same argument in Lemma \ref{lemma2} still works. The only difference is to prove that for $\lambda\in(a+\epsilon_0, b-\epsilon_0)$ and $|\lambda-x|, |\lambda-y| = \mathcal{O}(n^{-4/3})$, 
\begin{equation}\label{n4.1}
\det\left(\begin{array}{ccc}K_n(\lambda, \lambda) & K_n(\lambda, x) & K_n(\lambda, y) \\
K_n(x, \lambda) & K_n(x, x) & K_n(x, y) \\ 
K_n(y, \lambda) & K_n(y, x) & K_n(y, y)\end{array}\right) = \mathcal{O}(n^{7/3}).
\end{equation}
Indeed, after subtracting the first column from the second and the third column, we obtain that the entries in the second and the third columns are $\mathcal{O}(n^{-4/3})\cdot |\partial_xK_n|$. If we can prove $|K_n| = \mathcal{O}(n)$ and $|\partial_xK_n| = \mathcal{O}(n^2)$, then each term in the expansion of the determinant of (\ref{n4.1}) is of order $\mathcal{O}(n^{7/3})$. The upper bound for $K_n$ and $\partial K_n$ is proved in Lemma \ref{lemma3.4}.

The point-wise convergence the correlation function, which is a similar version of Lemma \ref{lemma3}, is the most difficult part. In Lemma \ref{lemma3.3} we see that the diagonal $2\times2$ blocks can have nontrivial contributions, which is of order $\mathcal{O}(n^{4/3})$. Moreover we calculate this contribution explicitly. To prove that the contributions of the off-diagonal $2\times2$ blocks are negeligible, it remains to prove $|K_n| = \mathcal{O}(n), |\partial K_n| = \mathcal{O}(n^2),$  $|\partial^2K_n|= \mathcal{O}(n^3)$ when $|x-y| = \mathcal{O}(n^{-4/3})$ and $|K_n| = \mathcal{O}(1)$ when $|x-y|$ is bounded away from zero. This is because we can subtract the $(2j-1)$-th column from the $2j$-th column and then subtract $(2j-1)$-th row from the $(2j)$-th row, for all $j = 1, \ldots, k$. Then all the entry in the off-diagonal $2\times 2$ blocks are of order $\mathcal{O}(1)$ while the diagonal $2\times 2$ blocks are of order 
\begin{equation}\label{n44}
\left(\begin{array}{cc}|K_n| & |\partial_yK_n| \cdot\mathcal{O}(n^{-4/3}) \\
|\partial_xK_n|\cdot\mathcal{O}(n^{-4/3}) & |\partial_{xy}^2K_n|\cdot\mathcal{O}(n^{-8/3}) \end{array}\right) = \left(\begin{array}{cc}\mathcal{O}(n) & \mathcal{O}(n^{2/3}) \\
\mathcal{O}(n^{2/3}) & \mathcal{O}(n^{1/3}) \end{array}\right)
\end{equation}
Hence in the expansion of the $(2k)\times(2k)$ determinant, if one term involves off-diagonal terms, then it is of order $\mathcal{O}(n^{4k/3 - 1/3})$, which is negligible. We note that the upper bounds of $|\partial^jK_n|$ are shown in Lemma \ref{lemma3.4} for $j = 0, 1, 2$.

The uniformly boundedness result (Lemma \ref{lemma4}) and the negligible result (Lemma \ref{lemma5}) can also be obtained using exactly the same argument. The only difference is to prove that for $x, y\in (a+\epsilon_0, b-\epsilon_0)$ and $|x-y|\leq\mathcal{O}(n^{-4/3})$, 
\begin{equation}\label{nn4.2}
\det\left(\begin{array}{cc}K_n(x, x) & K_n(x, y) \\
K_n(y, x) & K_n(y, y) \end{array}\right) = \mathcal{O}(n^{-4/3}).
\end{equation}
Indeed, we use the same trick again. We subtract the first row from the second row, and then subtract the first column from the second one. Then (\ref{nn4.2}) exactly becomes (\ref{n44}). Hence the result holds.

In conclusion, Theorem \ref{thm1.3} holds if we can prove Lemma \ref{lemma3.3} and Lemma \ref{lemma3.4} below. Now we conquer  them one by one. 

\begin{lemma}\label{lemma3.3}
If $|x-y| = \mathcal{O}(n^{-4/3})$ and $x\in(a+\epsilon_0, b-\epsilon_0)$, then
\begin{multline}
\det\left(\begin{array}{cc}K_n(x, x) & K_n(x, y) \\
K_n(y, x) & K_n(y, y) \end{array}\right) \to \\ \frac{1}{3}\pi^2n^4u^2\Biggl[\frac{\beta}{2\pi}\cdot\frac{\sqrt{((1+{\beta^{-1/2}})^2-x)(x-(1-{\beta^{-1/2}})^2)}}{x}\Biggl]^4 + \mathcal{O}(n)
\end{multline}
where the $\mathcal{O}$ notation is uniform in $x, y$ and $u := y-x$. 
\end{lemma}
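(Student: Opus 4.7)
The plan is to establish the microscopic sine-kernel limit of $K_n$ in the bulk of the Marchenko-Pastur distribution and then read off the determinant by Taylor-expanding in the small parameter $s := n\Psi(x) u = \mathcal{O}(n^{-1/3})$.

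First, we invoke the bulk Plancherel-Rotach asymptotics for the Laguerre functions: for $x \in (a+\epsilon_0, b-\epsilon_0)$ and uniformly in $x$,
\[
\psi_n(mx) = A(x)\, n^{-1/4}\cos(n\phi(x) + \gamma(x)) + \text{lower order oscillatory terms},
\]
where $\phi'(x) = \pi\Psi(x)$, so the local oscillation frequency matches the Marchenko-Pastur density, and the amplitude $A(x)$ is chosen to make $K_n(x,x)/n \to \Psi(x)$. Substituting into the Christoffel-Darboux representation (\ref{4.-1}) and applying product-to-sum identities, the fast-oscillating modes $\cos(2n\phi + \cdots)$ cancel thanks to the antisymmetric combination $\psi_{n-1}(mx)\psi_n(my) - \psi_n(mx)\psi_{n-1}(my)$, and the $1/(x-y)$ denominator is absorbed by the residual $\sin$-factor. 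The result is the bulk sine-kernel limit
\[
K_n(x, x+u) = n\Psi(x) \frac{\sin(\pi n \Psi(x) u)}{\pi n \Psi(x) u} + \mathcal{O}(1)
\]
uniformly for $u = \mathcal{O}(n^{-4/3})$.

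Next, with $s := n\Psi(x)u = \mathcal{O}(n^{-1/3})$, Taylor expanding $\sin(\pi s)/(\pi s) = 1 - \pi^2 s^2/6 + \mathcal{O}(s^4)$ yields
\[
K_n(x,x+u) = n\Psi(x) - \frac{\pi^2}{6} n^3 \Psi(x)^3 u^2 + \mathcal{O}(1),
\]
together with $K_n(x,x) = n\Psi(x) + \mathcal{O}(1)$ and $K_n(y,y) = n\Psi(x) + n\Psi'(x) u + \mathcal{O}(1)$. Forming the $2\times 2$ determinant, the leading $n^2 \Psi(x)^2$ pieces cancel; the surviving quadratic-in-$u$ contribution arises from squaring the $(\pi s)^2/6$ correction against the constant part of $K_n(x,y)$, giving
\[
K_n(x,x) K_n(y,y) - K_n(x,y)^2 = \frac{\pi^2}{3} n^4 \Psi(x)^4 u^2 + \mathcal{O}(n),
\]
since the cross term $n^2\Psi(x)\Psi'(x) u$ is $\mathcal{O}(n^{2/3})$, the $\mathcal{O}(s^4)$ tail contributes $\mathcal{O}(n^{2/3})$, and the $\mathcal{O}(1)$ residuals multiplied by $K_n = \mathcal{O}(n)$ give $\mathcal{O}(n)$. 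Plugging in $\Psi(x) = (\beta/(2\pi))\sqrt{((1+\beta^{-1/2})^2-x)(x-(1-\beta^{-1/2})^2)}/x$ yields the claim.

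The main obstacle is establishing the $\mathcal{O}(1)$ remainder in the sine-kernel approximation with the requisite uniformity. Naively, the $\mathcal{O}(n^{-3/4})$ error in $\psi_n$ is amplified by the Christoffel-Darboux denominator $1/(x-y) = \mathcal{O}(n^{4/3})$, producing spurious errors far larger than $\mathcal{O}(1)$. The necessary cancellations are recovered by retaining the next order in the Plancherel-Rotach asymptotics and by exploiting the fact that the antisymmetric combination $\psi_n(mx)\psi_{n-1}(my) - \psi_{n-1}(mx)\psi_n(my)$ vanishes at $y=x$, hence carries an explicit factor of $(x-y)$ plus controlled oscillatory pieces --- precisely the quantitative content encoded in the derivative bounds on $K_n$ established in Lemma \ref{lemma3.4}. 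Once those uniform bounds are in hand, the expansion above is entirely mechanical.
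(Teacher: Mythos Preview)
Your proposal is correct and follows essentially the same route as the paper. Both arguments reduce to the expansion $K_n(x,x+u)=n\Psi(x)-\tfrac{\pi^2}{6}n^3\Psi(x)^3 u^2+\mathcal{O}(1)$ and then compute the $2\times2$ determinant; the paper obtains this expansion by working out the Laguerre Plancherel--Rotach asymptotics in explicit trigonometric form (angles $\theta_0,\theta_1,\phi_0,\phi_1$, a product-to-sum identity, and a citation to higher-order expansions for the $\mathcal{O}(x-y)$ error), while you package the same content as the bulk sine-kernel limit and Taylor-expand --- exactly what the paper itself does later for the UUE case in Lemma~\ref{lemma5.1}. Your identification of the $\mathcal{O}(1)$ remainder as the only nontrivial step, and of the antisymmetry of the Christoffel--Darboux numerator plus higher-order Plancherel--Rotach as the cure, matches the paper's treatment.
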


\begin{proof} 
The proof is quite lengthy and  just involves messy calculations.
By the Plancherel-Rotach asymptotic as described in \cite{2}, we have
\begin{multline}\label{4.0}
L_n^{(m-n)}(mx)  =  \frac{1}{x^{1/4}\sqrt{\pi n\sin\theta_0}}e^{mx/2}e^{-n(\beta-1)/2}\beta^{n/2}x^{-n(\beta-1)/2} \times \\
 \biggl\{\sin\Bigl[(n+\frac{1}{2})\theta_0 - \frac{n}{2}\sin2\theta_0+\frac{n\beta}{2}\sin2\theta_1-(n\beta+\frac{1}{2})\theta_1 + \frac{\pi}{4}\Bigl]+\mathcal{O}(n^{-1})\biggl\},
\end{multline}
where the angles $\theta_0, \theta_1$ are defined by 
\begin{eqnarray}
\cos\theta_0 & := & \frac{\beta-1-\beta x}{2\sqrt{\beta x}}, \qquad\theta_0\in(0, \pi) \label{4.1}\\
\sin\theta_1 & := & \sin\theta_0/\beta, \qquad\theta_1\in(0, \pi/2). \label{4.2}
\end{eqnarray}
By defining $\widehat{\beta}_n, \widehat{x}$ via
\begin{equation}\label{4.3}
n(\beta-1) = (n-1)(\widehat{\beta}-1), \qquad nx = (n-1)\widehat{x}
\end{equation}
we can obtain a similar formula for $L_{n-1}^{(m-n)}(mx)$, stated below. 
\begin{multline}\label{4.9}
L_{n-1}^{(m-n)}(mx)  =  \frac{1}{{x}^{1/4}\sqrt{\pi n\sin{\theta_0}}}e^{nx/2}e^{-n(\beta-1)/2}\widehat{\beta}^{n/2}\widehat{x}^{-n(\beta-1)/2} \times \\ 
\biggl\{\sin\Bigl[(n-\frac{1}{2})\theta_0 - \frac{n}{2}\sin2\theta_0+\frac{n\beta}{2}\sin2\theta_1-(n\beta-\frac{1}{2})\theta_1 + \frac{\pi}{4}\Bigl]+\mathcal{O}(n^{-1})\biggl\}.
\end{multline}

For notational simplicity, we define the angle inside the sine function of (\ref{4.0}), (\ref{4.9}) to be $M_+(\theta_{0, 1})$ and $M_-(\theta_{0, 1})$, respectively. Finally we can plug (\ref{4.0}) and (\ref{4.9}) in (\ref{4.-1}) to get
\begin{multline}\label{4.10}
K_n(x, y) = \frac{1}{(x-y)(xy)^{1/4}\pi\sqrt{\sin\theta_0\sin\phi_0}}\times \\
 = \biggl\{\sin M_-(\theta_{0, 1})\sin M_+(\phi_{0, 1}) - \sin M_+(\theta_{0, 1})\sin M_-(\phi_{0, 1}) + \mathcal{O}(n^{-1})\biggl\}.
\end{multline}
Here the angles $\phi_{0}, \phi_1$ are the counterparts of (\ref{4.1}), (\ref{4.2}) with respect to $y$. Moreover, using the same technique in \cite{3} by using higher order expansions, we can further show that the error term in (\ref{4.10}) is $\mathcal{O}(x-y)$ (this is intuitive because the error term is a symmetrical difference with respect to $x$ and $y$). 

By the equality 
\[
\sin(a-b)\sin(c+d)-\sin(c-d)\sin(a+b) = \sin(a+c)\sin(d-b)+\sin(a-c)\sin(b+d)
\]
we have
\begin{multline}\label{4.11}
K_n(x, y)  =  \frac{1}{(x-y)(xy)^{1/4}\pi\sqrt{\sin\theta_0\sin\phi_0}}\cdot \biggl\{ \mathcal{O}(x-y) -\sin\Bigl[\frac{1}{2}(\theta_0+\phi_0)-\frac{1}{2}(\theta_1+\phi_1)\Bigl]\times \\
\sin\Bigl[n(\theta_0-\phi_0)-\frac{n}{2}(\sin2\theta_0-\sin2\phi_0) + \frac{n\beta}{2}(\sin2\theta_1-\sin2\phi_1)-n\beta(\theta_1-\phi_1)\Bigl]
\biggl\}.
\end{multline}

Let $y = x+u$ where $u\sim \mathcal{O}(n^{-4/3})$. Then we can expand $\phi_0, \phi_1$ near $\theta_0, \theta_1$. We list the result below.
\begin{eqnarray*}
\phi_0-\theta_0 & = & \frac{\sqrt{\beta}\cos\theta_1}{2x\sin\theta_0}u + \mathcal{O}(u^2), \\
\phi_1-\theta_1 & = & \frac{\cos\theta_0}{2 x\sin\theta_0}u+\mathcal{O}(u^2).
\end{eqnarray*}
We then plug these approximations in (\ref{4.11}) to get
\begin{eqnarray*}
K_n(x, y) & = & \frac{1}{u(xy)^{1/4}\pi\sqrt{\sin\theta_0\sin\phi_0}}\cdot \biggl\{ \mathcal{O}(u) +\pi\sqrt{\beta}xg(x)
\sin\Bigl[\pi nug(x) +\mathcal{O}(nu^2)\Bigl]
\biggl\} \\
& = & \frac{1}{(xy)^{1/4}\pi\sqrt{\sin\theta_0\sin\phi_0}}\cdot \biggl\{ \mathcal{O}(1)+\pi\sqrt{\beta}xg(x)
\Bigl[\pi ng(x) - \frac{1}{6}\pi^3n^3u^2g^3(x) \Bigl]\biggl\}\\
&= & \frac{1}{\sqrt{x}\pi\sin\theta_0}\cdot \biggl\{ \mathcal{O}(1)+\pi\sqrt{\beta}xg(x)
\Bigl[\pi ng(x) - \frac{1}{6}\pi^3n^3u^2g^3(x) \Bigl]\biggl\} \\
& = & ng(x) - \frac{1}{6}\pi^2n^3u^2g^3(x) + \mathcal{O}(1). 
\end{eqnarray*}
where $g(x)$ is the density for Marc\'eko-Pastur Law
\[
g(x) := \frac{\beta}{2\pi}\cdot\frac{\sqrt{\big((1+{\beta^{-1/2}})^2-x\big)\big(x-(1-{\beta^{-1/2}})^2\big)}}{x}.
\]
Finally, by using the approximation, we can obtain the determinant
\begin{eqnarray*} 
K_n(x, x)K_n(y, y) - K_n(x, y)^2  =  \frac{1}{3}\pi^2g^4(x)n^4u^2 + \mathcal{O}(n).
\end{eqnarray*}
This completes the proof.  
\end{proof}

\begin{lemma}\label{lemma3.4}
Uniformly for $x, y\in(a+\epsilon_0, b-\epsilon_0)$, $K_n(x, y) = \mathcal{O}(n)$. Moreover, the first and second partial derivatives of $K_n(x, y)$ are of order $\mathcal{O}(n^2)$ and $\mathcal{O}(n^{3})$, respectively. Finally, if in addition $|x-y|>\delta$ for some constant $\delta$, then $K_n(x, y) = \mathcal{O}(1)$.
\end{lemma}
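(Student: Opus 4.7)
The plan is to extract all four estimates from the Plancherel--Rotach asymptotic (4.0)--(4.9) that was already invoked in the proof of Lemma \ref{lemma3.3}. Substituting (4.0) and (4.9) into the Christoffel--Darboux form (4.-1) yields the representation (4.11), which has the schematic shape
\[
K_n(x,y) = \frac{B(x,y)}{x-y}\Bigl\{\mathcal{O}(x-y) - S(x,y)\sin\Phi_n(x,y)\Bigr\},
\]
where $B(x,y) = \pi^{-1}(xy)^{-1/4}(\sin\theta_0\sin\phi_0)^{-1/2}$ and $S(x,y) = \sin[\tfrac{1}{2}(\theta_0+\phi_0) - \tfrac{1}{2}(\theta_1+\phi_1)]$ are bounded above and away from $0$ on $(a+\epsilon_0,b-\epsilon_0)$, and the large phase $\Phi_n(x,y)$ satisfies $\Phi_n = \mathcal{O}(n|x-y|)$ together with $\partial_y\Phi_n|_{y=x} = -\pi n g(x)/B(x,x)$ (as follows from the Taylor expansion of $\phi_0,\phi_1$ around $\theta_0,\theta_1$ already carried out in the proof of Lemma \ref{lemma3.3}).

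First I would establish $K_n(x,y) = \mathcal{O}(n)$. When $|x-y| \geq c/n$, the sine factor is bounded by $1$ and $1/|x-y| \le n/c$, yielding the bound. When $|x-y| < c/n$, I Taylor-expand $\sin\Phi_n = \Phi_n + \mathcal{O}(\Phi_n^3)$: the leading contribution $\Phi_n/(x-y)$ is $\mathcal{O}(n)$ by the derivative formula above, the cubic remainder gives $\mathcal{O}(n^3(x-y)^2) = \mathcal{O}(n)$, and the $\mathcal{O}(x-y)$ in the braces contributes $\mathcal{O}(1)$ after division. The bound $K_n(x,y) = \mathcal{O}(1)$ when $|x-y|>\delta$ is immediate from the same display, since $|\sin| \leq 1$ and $1/|x-y| \leq 1/\delta$.

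For the derivative bounds I would avoid differentiating the Plancherel--Rotach expansion directly, since the remainder in (4.0) is not \emph{a priori} differentiable with a gain in $n$. Instead, I would use the standard Laguerre identities $\frac{d}{dt}L_\ell^{(\alpha)}(t) = -L_{\ell-1}^{(\alpha+1)}(t)$ together with $\frac{d}{dt}[t^{\alpha/2}e^{-t/2}]$ to express $\psi_\ell'(mx)$ as a bounded linear combination of $\psi_\ell(mx)$ and an auxiliary Laguerre function of index $\ell-1$ and parameter shifted by $1$. Both factors admit a Plancherel--Rotach expansion of the same form as (4.0)--(4.9) with matching leading amplitudes on the bulk. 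Plugging these into the Christoffel--Darboux formula again produces $\partial_x K_n(x,y)$ with the same schematic shape as $K_n(x,y)$, except that one of the sines is replaced by a cosine multiplied by $\partial_x\Phi_n = \mathcal{O}(n)$. This gives $\partial_x K_n = \mathcal{O}(n^2)$; one further iteration produces the analogous formula for $\partial^2_{xy}K_n$, with an additional factor of $n$, hence $\partial^2_{xy}K_n = \mathcal{O}(n^3)$.

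The main technical obstacle is verifying that the Plancherel--Rotach asymptotic for the shifted quantities $L_{\ell-1}^{(m-n+1)}(mx)$ carries through with the same \emph{uniform} remainder, and that the associated angles $\theta_0,\theta_1$ remain bounded away from $0$ and $\pi$ on the bulk. This requires redoing the prefactor bookkeeping in (4.0) with $m-n$ replaced by $m-n+1$ and checking that the resulting shifts in the stationary-phase points are $\mathcal{O}(n^{-1})$, so that the amplitude equivalences used in the preceding paragraph are legitimate. This step is bookkeeping rather than conceptually new, but it is the place where care is needed to ensure all implicit constants are indeed uniform in $x,y\in(a+\epsilon_0,b-\epsilon_0)$.
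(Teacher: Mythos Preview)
Your approach is correct, but it is more laborious than the paper's, and the ``main technical obstacle'' you isolate can be sidestepped entirely.

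For the bound $K_n = \mathcal{O}(n)$, the paper does not use the case split on (4.11); it works directly with the integral representation (\ref{4.-2}). The Plancherel--Rotach asymptotic gives $\psi_{n-k}(mx) = \mathcal{O}(n^{-1/2})$ on the bulk, and the paper shows $\psi'_{n-k}(mx) = \mathcal{O}(n^{-1/2})$ as well (see below), whence (\ref{4.-2}) yields $K_n = m^{3/2}n^{1/2}\cdot\mathcal{O}(n^{-1/2})\cdot\mathcal{O}(n^{-1/2}) = \mathcal{O}(n)$ with no need to distinguish the regimes $|x-y|\gtrless c/n$.

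For the derivatives, the key simplification is to use the \emph{same-parameter} recurrence rather than the parameter-shifting identity $\frac{d}{dt}L_\ell^{(\alpha)} = -L_{\ell-1}^{(\alpha+1)}$. Indeed, the classical relation $t\,\frac{d}{dt}L_\ell^{(\alpha)}(t) = \ell L_\ell^{(\alpha)}(t) - (\ell+\alpha)L_{\ell-1}^{(\alpha)}(t)$, combined with the weight, gives
\[
\psi_\ell'(t) = -\tfrac{1}{2}\psi_\ell(t) + \tfrac{m-n+2\ell}{2t}\psi_\ell(t) - \tfrac{\sqrt{\ell(\ell+m-n)}}{t}\psi_{\ell-1}(t),
\]
so $\psi_\ell'$ is a bounded combination of $\psi_\ell$ and $\psi_{\ell-1}$ with the \emph{same} parameter $m-n$. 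Thus $\psi'_{n-k}(mx) = \mathcal{O}(n^{-1/2})$ follows immediately from the bulk asymptotic already in hand, with no need to redo Plancherel--Rotach for $L_{\ell-1}^{(m-n+1)}$. Iterating gives $\psi''_{n-k}(mx) = \mathcal{O}(n^{-1/2})$. Differentiating (\ref{4.-2}) in $x$ then brings down a single factor of $m$ (from $\partial_x z_t = t$) and replaces $\psi'$ by $\psi''$, giving $\partial_x K_n = \mathcal{O}(n^2)$; one more differentiation gives $\mathcal{O}(n^3)$. The off-diagonal bound $K_n = \mathcal{O}(1)$ for $|x-y|>\delta$ is read off from (\ref{4.-1}) exactly as you do.

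In short, your route via the shifted Laguerre family and a re-derived schematic for $\partial_x K_n$ would work, but the paper avoids the bookkeeping you flag by staying within the fixed-parameter family and reading everything off the integral form (\ref{4.-2}).
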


\begin{proof} By the Plancherel-Rotach asymptotics for Laguerre polynomials (\ref{4.1}) and the definition for $\psi_\ell(x)$, we obtain that $\psi_{n-k}(nx) = \mathcal{O}(n^{-1/2})$ for fixed integer $k$. By the formula
\[
\frac{d}{dx}\psi_\ell(x) = -\frac{1}{2}\psi_\ell(x) + \frac{m-n+2\ell}{2x}\psi_\ell(x) - \frac{\sqrt{\ell(\ell+m-n)}}{x}\psi_{\ell-1}(x),
\]
we obtain that $\psi'(n-k)(nx) = \mathcal{O}(n^{-1/2})$. Taking derivatives again yields $\psi''(n-k)(nx) = \mathcal{O}(n^{-1/2})$. If we plug this in (\ref{4.-2}), we observe that $K_n(x, y) = \mathcal{O}(n)$. Taking partial derivatives of $K_n(x, y)$ gives that the first order derivative of $K_n(x, y)$ is of order $\mathcal{O}(n^2)$ and the second order is of $\mathcal{O}(n^3)$.

On the other hand, if $|x-y|>\delta$ is uniformly bounded from zero, then from (\ref{4.-1}) we conclude $K_n(x, y) = \mathcal{O}(1)$. 
\end{proof}

%%%%%%%%%%%%%%%%%%%%%%%%%%%%%%%%%%%%%%%%%%%%%%
%%%%%%%%%%%%%%%%%%%%%%%%%%%%%%%%%%%%%%%%%%%%%%
%%%%%%%%%%%%%%%%%%%%%%%%%%%%%%%%%%%%%%%%%%%%%%
%%%%%%%%%%%%%%%%%%%%%%%%%%%%%%%%%%%%%%%%%%%%%%
%%%%%%%%%%%%%%%%%%%%%%%%%%%%%%%%%%%%%%%%%%%%%%
%%%%%%%%%%%%%%%%%%%%%%%%%%%%%%%%%%%%%%%%%%%%%%
%%%%%%%%%%%%%%%%%%%%%%%%%%%%%%%%%%%%%%%%%%%%%%
%%%%%%%%%%%%%%%%%%%%%%%%%%%%%%%%%%%%%%%%%%%%%%
%%%%%%%%%%%%%%%%%%%%%%%%%%%%%%%%%%%%%%%%%%%%%%
%%%%%%%%%%%%%%%%%%%%%%%%%%%%%%%%%%%%%%%%%%%%%%
%%%%%%%%%%%                              Section 5                                  %%%%%%%%%%%%%
%%%%%%%%%%%%%%%%%%%%%%%%%%%%%%%%%%%%%%%%%%%%%%
%%%%%%%%%%%%%%%%%%%%%%%%%%%%%%%%%%%%%%%%%%%%%%
%%%%%%%%%%%%%%%%%%%%%%%%%%%%%%%%%%%%%%%%%%%%%%
%%%%%%%%%%%%%%%%%%%%%%%%%%%%%%%%%%%%%%%%%%%%%%
%%%%%%%%%%%%%%%%%%%%%%%%%%%%%%%%%%%%%%%%%%%%%%
%%%%%%%%%%%%%%%%%%%%%%%%%%%%%%%%%%%%%%%%%%%%%%
%%%%%%%%%%%%%%%%%%%%%%%%%%%%%%%%%%%%%%%%%%%%%%
%%%%%%%%%%%%%%%%%%%%%%%%%%%%%%%%%%%%%%%%%%%%%%
%%%%%%%%%%%%%%%%%%%%%%%%%%%%%%%%%%%%%%%%%%%%%%
%%%%%%%%%%%%%%%%%%%%%%%%%%%%%%%%%%%%%%%%%%%%%%
%%%%%%%%%%%%%%%%%%%%%%%%%%%%%%%%%%%%%%%%%%%%%%
%%%%%%%%%%%%%%%%%%%%%%%%%%%%%%%%%%%%%%%%%%%%%%
%%%%%%%%%%%%%%%%%%%%%%%%%%%%%%%%%%%%%%%%%%%%%%
\section{Proof for the Universal Unitary Ensemble Case}
For the UUE case, we proceed by using exactly the same argument as in the previous section. The $k$-point correlation function has a similar formula 
\[
\rho_k = \Bigl(K_n(\lambda_i, \lambda_j)\Bigl)_{i, j = 1}^n
\]
where the kernel $K_n(x, y)$ can also be defined via orthogonal polynomials. 
\[
K_n(x, y) = e^{-\frac{n}{2}(V(x) + V(y))}\sum_{j=0}^{n-1}p_j(x)p_j(y).
\]
Here $p_j(x)$ is the $j$-th orthonormal polynomial with respect to the weight $e^{-nV(x)}$.

Throughout the whole section we assume that the equilibrium measure $\Psi(x)dx$ is supported on a single interval $[a, b]$.

To prove Theorem \ref{thm1.4}, we need to run the argument again. As is analyzed in the previous section, we just need to prove the following two Lemmas. 

\begin{lemma}\label{lemma5.1}
For any $\epsilon_0>0$, if $|x-y|=\mathcal{O}(n^{-4/3})$ and $x\in(a+\epsilon_0, b-\epsilon_0)$, then
\[
\det\left(\begin{array}{cc}K_n(x, x) & K_n(x, y) \\ K_n(y, x) & K_n(y, y)\end{array}\right) \to \frac{\pi^2}{3}n^4u^2\Psi(x)^4 + \mathcal{O}(n),
\]
where the $\mathcal{O}$ notation is uniform in $x, y$ and $u := y-x$.
\end{lemma}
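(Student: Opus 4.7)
The plan is to follow the template of Lemma~\ref{lemma3.3} verbatim, substituting the Plancherel--Rotach asymptotics for Laguerre polynomials with the corresponding Riemann--Hilbert bulk asymptotics for the orthonormal polynomials $\{p_j\}$ associated to the weight $e^{-nV(x)}$. Under the real-analyticity and growth hypotheses on $V$, such asymptotics are provided by the Deift--Zhou nonlinear steepest-descent analysis recorded in \cite{15}, and constitute exactly the input used there to prove sine-kernel universality in the bulk. My first step is to rewrite $K_n(x,y)$ via the Christoffel--Darboux formula
\[
K_n(x,y) = a_n\, e^{-\frac{n}{2}(V(x)+V(y))}\, \frac{p_n(x)p_{n-1}(y) - p_{n-1}(x)p_n(y)}{x-y},
\]
where $a_n = \gamma_{n-1}/\gamma_n$ converges to a positive constant. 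For $x$ in $(a+\epsilon_0,b-\epsilon_0)$, I would invoke the standard bulk asymptotic
\[
e^{-\frac{n}{2}V(x)}p_n(x) = \sqrt{\frac{2}{\pi w(x)}}\, \cos\bigl(n\pi F(x) + \alpha(x)\bigr) + \mathcal{O}(n^{-1}),
\]
with the analogous formula for $p_{n-1}$, where $F$ is an antiderivative of the equilibrium density ($F'(x) = \Psi(x)$) and $w,\alpha$ are smooth functions of $x$ determined by $V$. The index shift from $n$ to $n-1$ contributes an extra phase of $-\pi F(x)$, which is precisely the source of the sine kernel.

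Plugging the two asymptotics into the Christoffel--Darboux formula and applying the identity
\[
\sin(A)\cos(B) - \cos(A)\sin(B) = \sin(A-B),
\]
exactly as in the passage from (\ref{4.10}) to (\ref{4.11}), collapses the four oscillatory terms into a single factor and yields, uniformly in $x,y \in (a+\epsilon_0,b-\epsilon_0)$,
\[
K_n(x,y) = \frac{\sin\bigl(n\pi(x-y)\widetilde\Psi(x,y)\bigr)}{\pi(x-y)} + \mathcal{O}(1),
\]
with $\widetilde\Psi(x,y) \to \Psi(x)$ as $y\to x$. Specializing to $u = y-x = \mathcal{O}(n^{-4/3})$ makes the argument of the sine $\mathcal{O}(n^{-1/3})$, so a third-order Taylor expansion, together with $\widetilde\Psi(x,y) = \Psi(x) + \mathcal{O}(u)$, produces
\[
K_n(x,y) = n\Psi(x) - \frac{\pi^2}{6}n^3 u^2 \Psi(x)^3 + \mathcal{O}(1).
\]

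The determinant expansion is then purely algebraic. Using that $K_n(x,x) = n\Psi(x) + \mathcal{O}(1)$ and $K_n(y,y) = n\Psi(y) + \mathcal{O}(1) = n\Psi(x) + \mathcal{O}(1)$ (since $n|\Psi(y)-\Psi(x)| = \mathcal{O}(n^{-1/3})$), one obtains $K_n(x,x)K_n(y,y) = n^2\Psi(x)^2 + \mathcal{O}(n)$. Squaring the expansion of $K_n(x,y)$ gives
\[
K_n(x,y)^2 = n^2\Psi(x)^2 - \frac{\pi^2}{3}n^4 u^2 \Psi(x)^4 + \mathcal{O}(n),
\]
where the squared correction contributes only $(n^3 u^2)^2 = \mathcal{O}(n^{2/3})$ and the cross term involving the $\mathcal{O}(1)$ error is $\mathcal{O}(n)$. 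Subtracting, the two leading $n^2\Psi(x)^2$ terms cancel and one is left with
\[
\det\begin{pmatrix}K_n(x,x) & K_n(x,y) \\ K_n(y,x) & K_n(y,y)\end{pmatrix} = \frac{\pi^2}{3}n^4 u^2 \Psi(x)^4 + \mathcal{O}(n),
\]
as required.

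The main obstacle is the very first step: establishing the Plancherel--Rotach asymptotics for $p_n$ and $p_{n-1}$ with an error uniformly of order $\mathcal{O}(n^{-1})$ on closed bulk subintervals, and in a form sharp enough that the resulting remainder in $K_n(x,y)$ is $\mathcal{O}(1)$ rather than $o(n)$. For Hermite polynomials (GUE) this is classical; for Laguerre polynomials it was the content of formulas (\ref{4.0}) and (\ref{4.9}); for general real-analytic $V$ satisfying our growth condition it requires the full Riemann--Hilbert machinery of Deift--Kriecherbauer--McLaughlin--Venakides--Zhou as summarized in \cite{15}. Once this input is accepted, the trigonometric collapse and the determinant cancellation follow the Wishart proof line for line.
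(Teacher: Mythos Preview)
Your argument is correct and lands on exactly the same computation as the paper: expand $K_n(x,y)$ as $n\Psi(x)-\tfrac{\pi^2}{6}n^3u^2\Psi(x)^3+\mathcal{O}(1)$, square, and subtract. The only difference is where you enter the literature. You go back to Christoffel--Darboux and the Plancherel--Rotach asymptotics for $p_n,p_{n-1}$ from \cite{15}, then reproduce the trigonometric collapse of Lemma~\ref{lemma3.3} to reach the sine kernel; the paper instead quotes the already-assembled bulk universality statement from \cite{15} (their Lemma~6.1), namely
\[
\frac{1}{n\Psi(x)}K_n\Bigl(x+\tfrac{\eta}{n\Psi(x)},\,x+\tfrac{\xi}{n\Psi(x)}\Bigr)=\frac{\sin\pi(\xi-\eta)}{\pi(\xi-\eta)}+\mathcal{O}(n^{-1}),
\]
and Taylor-expands from there in three lines. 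Your route has the virtue of making the UUE proof structurally identical to the Wishart proof, but it re-derives a result that \cite{15} already packages at the kernel level; quoting the sine-kernel statement directly spares you the ``main obstacle'' you flag at the end.
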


\begin{proof} We use Lemma 6.1 of \cite{15} which states that if $x\in(a+\epsilon_0, b -\epsilon_0)$, then
\[
\frac{1}{n\Psi(x)}K_n\biggl(x+\frac{\eta}{n\Psi(x)}, x+\frac{\xi}{n\Psi(x)}\biggl) = \frac{\sin\pi(\xi-\eta)}{\pi(\xi-\eta)} + \mathcal{O}(n^{-1})
\]
where the $\mathcal{O}$ notation is uniform in $x, \xi$ and $\eta$.
Thus we have
\begin{eqnarray*}
\left|\begin{array}{cc}K_n(x, x) & K_n(x, y) \\ K_n(y, x) & K_n(y, y)\end{array}\right|  & = & K_n(x, x)K_n(y, y) - K_n(x, y)K_n(y, x) \\
& = & n^2\Psi(x)^2 - n^2\Psi(x)^2\frac{\sin^2(\pi nu\Psi(x))}{\pi^2n^2u^2\Psi(x)^2} + \mathcal{O}(n) \\ 
& = & \frac{1}{3}\pi^2\Psi(x)^4n^4u^2 + \mathcal{O}(n).
\end{eqnarray*}
This completes the proof.
\end{proof}

\begin{lemma}\label{lemma5.2}
Uniformly for $x, y\in(a+\epsilon_0, b-\epsilon_0)$, $K_n(x, y) = \mathcal{O}(n)$. Moreover, the first and second partial derivatives of $K_n(x, y)$ are of order $\mathcal{O}(n^2)$ and $\mathcal{O}(n^{3})$, respectively. Finally, if in addition $|x-y|>\delta$ for some constant $\delta$, then $K_n(x, y) = \mathcal{O}(1)$.
\end{lemma}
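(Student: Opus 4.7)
The plan is to mirror the proof of Lemma \ref{lemma3.4} word-for-word, replacing the Plancherel--Rotach asymptotics for Laguerre polynomials by the Deift--Kriecherbauer--McLaughlin--Venakides--Zhou / Pastur--Shcherbina bulk asymptotics for the $V$-orthonormal polynomials $p_j$, which are the technical engine behind [15] and are already being used in Lemma \ref{lemma5.1}. The starting point is the Christoffel--Darboux identity
\[
K_n(x,y) = \frac{k_{n-1}}{k_n}\cdot\frac{\psi_n(x)\psi_{n-1}(y) - \psi_{n-1}(x)\psi_n(y)}{x-y},
\]
where $\psi_j(x) := p_j(x)e^{-nV(x)/2}$ and $k_j$ is the leading coefficient of $p_j$. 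The ratio $k_{n-1}/k_n$ converges to $(b-a)/4$ by the standard three-term recurrence asymptotics and is in particular bounded.

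First I would extract, from the Riemann--Hilbert analysis in [15], the uniform bulk representation
\[
\psi_{n-\ell}(x) = A_\ell(x)\cos\bigl(n\Phi(x) + \gamma_\ell(x)\bigr) + \mathcal{O}(n^{-1}), \qquad x\in(a+\epsilon_0, b-\epsilon_0),
\]
for any fixed $\ell\geq 0$, where $A_\ell, \Phi, \gamma_\ell$ are real-analytic on $(a+\epsilon_0, b-\epsilon_0)$ and the phase derivative $\Phi'(x) = \pi\Psi(x)$ is the density of the equilibrium measure. From this, the assertion $K_n(x,y)=\mathcal{O}(1)$ under $|x-y|>\delta$ is immediate: the numerator in Christoffel--Darboux is uniformly $\mathcal{O}(1)$ and the denominator is bounded away from zero. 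For the global bound $K_n(x,y)=\mathcal{O}(n)$, I would Taylor-expand the numerator of Christoffel--Darboux at $y=x$; the constant term vanishes and the linear-in-$(y-x)$ coefficient produces a factor of $n\Phi'(x)$ from differentiating the oscillatory phase, which then cancels the $(x-y)^{-1}$ to yield the $\mathcal{O}(n)$ bound uniformly in the separation of $x$ and $y$. This is consistent with Lemma \ref{lemma5.1}, which gives $K_n(x,x)=n\Psi(x)+\mathcal{O}(1)$.

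For the derivative bounds, I would simply differentiate the Christoffel--Darboux expression: each $x$- or $y$-derivative falling on $\psi_n$ or $\psi_{n-1}$ picks up one power of $n$ from $n\Phi'$ inside the cosine, while derivatives of the smooth amplitudes $A_\ell, \gamma_\ell$ and of $e^{-nV/2}\cdot p_j$ expansions using the recurrence $\psi_\ell'(x) = -\tfrac{n}{2}V'(x)\psi_\ell(x) + c_\ell\psi_{\ell-1}(x) + d_\ell\psi_{\ell+1}(x)$ also give at most one factor of $n$. Combined with the argument of the previous paragraph, which already shows the Taylor-cancellation of $(x-y)^{-1}$ on each resulting piece, this yields $\partial K_n = \mathcal{O}(n^2)$ and $\partial^2 K_n = \mathcal{O}(n^3)$ uniformly on $(a+\epsilon_0,b-\epsilon_0)^2$.

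The main obstacle is that, unlike the Laguerre setting, I cannot cite a single explicit asymptotic formula; I must rely on the fact that the Riemann--Hilbert bulk expansion in [15] may be differentiated in $x$ any fixed number of times with the error term remaining $\mathcal{O}(n^{-1})$. This is a genuine technical input (not merely an algebraic rearrangement): it depends on analyticity of $V$, on the single-interval support assumption, and on $x$ being bounded away from the edges by $\epsilon_0$. Once this differentiated expansion is granted, the bookkeeping above is routine and exactly parallel to the proof of Lemma \ref{lemma3.4}.
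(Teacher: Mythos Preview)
Your proposal is sound, and in fact supplies far more detail than the paper itself: the paper's entire proof of this lemma is the single sentence ``see Lemma 10.1 in \cite{4}'' (Vinson's thesis). Your sketch---Christoffel--Darboux plus the bulk Plancherel--Rotach-type asymptotics from \cite{15}, rewritten in the integral form analogous to (\ref{4.-2}) to kill the $(x-y)^{-1}$ singularity before differentiating---is exactly the natural self-contained argument and presumably what Vinson does as well.

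Two small remarks. First, the differential recurrence you wrote, $\psi_\ell'(x) = -\tfrac{n}{2}V'(x)\psi_\ell(x) + c_\ell\psi_{\ell-1}(x) + d_\ell\psi_{\ell+1}(x)$, is not literally correct for a general potential $V$: there is no clean three-term formula for $\psi_\ell'$ with constant coefficients outside the classical (Hermite/Laguerre/Jacobi) cases. This does not damage your argument, since the bounds $\psi_n^{(j)} = \mathcal{O}(n^j)$ follow directly from differentiating the Riemann--Hilbert asymptotic expansion, which you already invoke; just drop the recurrence and rely on that. Second, when you differentiate $K_n$, it is cleanest to first pass to the integral representation $K_n(x,y) = \tfrac{k_{n-1}}{k_n}\bigl[\psi_{n-1}(y)\int_0^1\psi_n'(z_t)\,dt - \psi_n(y)\int_0^1\psi_{n-1}'(z_t)\,dt\bigr]$ with $z_t = tx+(1-t)y$, exactly as in (\ref{4.-2}), so that no negative powers of $(x-y)$ ever appear and each $\partial_x$ or $\partial_y$ transparently costs one factor of $n$. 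With those cosmetic fixes your outline is complete.
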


For the proof of Lemma \ref{lemma5.2}, see Lemma 10.1 in \cite{4}.

In conclusion, we successfully extended the result to the UUE case. 

%%%%%%%%%%%%%%%%%%%%%%%%%%%%%%%%%%%%%%%%%%%%%%
%%%%%%%%%%%%%%%%%%%%%%%%%%%%%%%%%%%%%%%%%%%%%%
%%%%%%%%%%%%%%%%%%%%%%%%%%%%%%%%%%%%%%%%%%%%%%
%%%%%%%%%%%%%%%%%%%%%%%%%%%%%%%%%%%%%%%%%%%%%%
%%%%%%%%%%%%%%%%%%%%%%%%%%%%%%%%%%%%%%%%%%%%%%
%%%%%%%%%%%%%%%%%%%%%%%%%%%%%%%%%%%%%%%%%%%%%%
%%%%%%%%%%%%%%%%%%%%%%%%%%%%%%%%%%%%%%%%%%%%%%
%%%%%%%%%%%%%%%%%%%%%%%%%%%%%%%%%%%%%%%%%%%%%%
%%%%%%%%%%%%%%%%%%%%%%%%%%%%%%%%%%%%%%%%%%%%%%
%%%%%%%%%%%%%%%%%%%%%%%%%%%%%%%%%%%%%%%%%%%%%%
%%%%%%%%%%%                              Section 6                                  %%%%%%%%%%%%%
%%%%%%%%%%%%%%%%%%%%%%%%%%%%%%%%%%%%%%%%%%%%%%
%%%%%%%%%%%%%%%%%%%%%%%%%%%%%%%%%%%%%%%%%%%%%%
%%%%%%%%%%%%%%%%%%%%%%%%%%%%%%%%%%%%%%%%%%%%%%
%%%%%%%%%%%%%%%%%%%%%%%%%%%%%%%%%%%%%%%%%%%%%%
%%%%%%%%%%%%%%%%%%%%%%%%%%%%%%%%%%%%%%%%%%%%%%
%%%%%%%%%%%%%%%%%%%%%%%%%%%%%%%%%%%%%%%%%%%%%%
%%%%%%%%%%%%%%%%%%%%%%%%%%%%%%%%%%%%%%%%%%%%%%
%%%%%%%%%%%%%%%%%%%%%%%%%%%%%%%%%%%%%%%%%%%%%%
%%%%%%%%%%%%%%%%%%%%%%%%%%%%%%%%%%%%%%%%%%%%%%
%%%%%%%%%%%%%%%%%%%%%%%%%%%%%%%%%%%%%%%%%%%%%%
%%%%%%%%%%%%%%%%%%%%%%%%%%%%%%%%%%%%%%%%%%%%%%
%%%%%%%%%%%%%%%%%%%%%%%%%%%%%%%%%%%%%%%%%%%%%%
%%%%%%%%%%%%%%%%%%%%%%%%%%%%%%%%%%%%%%%%%%%%%%
\section{Conclusion}
The minimum gap of the eigenvalues is the fine structure of the spectrum. This paper addresses two questions: (1) under what scale can we observe a clear picture of them? and (2) under the correct scale, what can we see? 

For the Ginibre ensemble where the eigenvalues lie on the two dimensional complex plane, the correct scale of the minimum gap is $\mathcal{O}(n^{-3/4})$. On the other hand for the Wishart ensemble and the universal unitary ensemble where we only have real eigenvalues, the correct scale is $\mathcal{O}(n^{-4/3})$. We also showed that all of the three cases have Poissonian limit after the correct scaling. This implies, heuristically, that the small eigenvalue spacing exhibit some asymptotic independency. 

Compared to Vinson's result in \cite{4} and Soshnikov's result in \cite{5}, we can obtain the joint distribution of $k$ smallest gaps, without requiring $k=1$. Our further research may include the generalization of the theory into general random point fields, or even in the high dimensional case.

\section*{Acknowledgment}
We would like to thank Professor Papanicolaou for precious advices and suggestions for this paper.

%\bibliographystyle{plain}
%\bibliography{../ml}

\begin{thebibliography}{9}

\bibitem{1} 
\textsc{Gerard Ben Arous, Paul Bourgade}, 
\textit{Extreme Gaps Between Eigenvalues of Random Matrices}, 
arXiv:1010.1294v1 [math.PR]

\bibitem{2} \textsc{A. P. Smith}, 
\textit{A New Asymptotic Form for the Laguerre Polynomials}, 
{J. Math. Phys.}, 33, 1666, 1992.

\bibitem{3} 
\textsc{B. Delyon, J. Yao}, \textit{On the spectral distribution of Gaussian random matrices}, 
Acta Mathematicae Sinica, English Series Vol. 22, No. 2, 297--312, 2006.

\bibitem{4} 
\textsc{J. Vinson}, 
\textit{Closest Spacing of Eigenvalues}, 
PhD thesis, Princeton University, 2001.

\bibitem{5} 
\textsc{A. Soshnikov}, 
\textit{Statistics of Entreme Spacings in Determinantal Random Point Processes}, 
Moscov Math. J., vol. 5, No. 3, 705--19, 2005.

\bibitem{6} 
\textsc{M. L. Mehta}, 
\textit{Random Matrices}, 
Second Edition, Academic Press, London, 1991.

\bibitem{7} \textsc{T. Tao, V. Vu, M. Krishnapur}, 
\textit{Random matrices: Universality of ESDs and the Circular Law}, 
Ann. Probab. Vol. 38, No. 5, 2023--2065, 2010.

\bibitem{8} \textsc{V.A. Marchenko, L.A. Pastur}, 
\textit{Distribution of eigenvalues for some sets of random matrices}, 
Mat. Sb. (N.S.), 72(114):4, 507--536, 1967.

\bibitem{9} 
\textsc{R. Horn, C. Johnson}, 
\textit{Matrix Analysis}, 
Cambridge University Press, 1985.

\bibitem{11} 
\textsc{A. Edelman}, 
\textit{Random Matrix Theory}, 
Acta Numerica, 1--65, 2005.

\bibitem{12} 
\textsc{U. Haagerup, S. Thorbj\o rnsen}, 
\textit{Random Matrices with Complex Gaussian Entries}, 
Expo. Math, Vol. 21, 293--337, 1998.

\bibitem{13} \textsc{J. P. Keating}, 
\textit{Random Matrices and the Riemann Zeta-Function: a Review}, 
Applied Mathematics Entering the 21st Century: Invited
Talks from the ICIAM 2003 Congress, 210--225, 2004.

\bibitem{14} \textsc{N. M. Katz, P. Sarnack}, 
\textit{Random Matrices, Frobenius Eigenvalues and Monodromy}, 
American Mathematics Society Colloquium Publications, 45. American Mathematical Society, Providence, Rhode island, 1999.

\bibitem{15} 
\textsc{P. Deift, T. Keiecherbauer, K. T-R McLaughlin, S. Venakides and X. Zhou}, 
\textit{Strong Asymptotics of Orthogonal Polynomials with respect to Exponential Weights}, 
Comm. Pure Appl. Math., 52(12): 1491--1552, 1999.

\bibitem{16} 
\textsc{A. Soshnikov}, 
\textit{Level Spacings Distribution for Large Random Matrices: Gaussian Fluctuations}, 
Ann. of Math., Vol. 5, No. 3, 705--719, 2005.

\bibitem{17} 
\textsc{T. Tao, V. Vu}, 
\textit{Random MatricesL Universality of Local Eigenvalues Statistics}, 
Acta Math., to appear.

\bibitem{18} 
\textsc{L. Erd\"{o}s, P\'{e}ch\'{e}, J. Ram\'{i}rez, B. Schlein and H.T. Yau}, 
\textit{Bulk Universality for Wigner Matrices}, 
Comm. Pure Appl. Math., Vol. 63, Issue 7, 895--925, 2010.

\bibitem{19}
\textsc{L. Erd\"{o}s, J. Ram\'{i}rez, B. Schlein, T. Tao, V. Vu, and H.T. Yau}, 
\textit{Bulk Universality for Wigner Hermitian Matrices with Subexponential Decay}, 
Math. Res. Lett. 17, No. 04, 667--674, 2010.
\end{thebibliography}
\end{document}